\newcommand{\F}{\mathbb{F}_2}
\newcommand{\FQ}{\mathbb{F}_{q^2}}
\newcommand{\Fq}{\mathbb{F}_q}
\newcommand{\Tr}[3]{\mathrm{Tr}^{#1}_{#2}(#3)}
\newcommand{\Tra}{\mathrm{Tr}}
\newcommand{\N}[3]{\mathrm{N}^{#1}_{#2}(#3)}
\newcommand{\No}{\mathrm{N}}
\newcommand{\He}{\mathcal{H}}
\newcommand{\I}{\mathrm{Im}}
\newcommand{\Bmq}{\mathcal{B}_{m,q}}
\begin{document}


\begin{frontmatter}

\title{ On the Hermitian curve, its intersections with some conics and their
applications to affine-variety codes and Hermitian codes } 
\runtitle{Hermitian curves, intersection with conics, affine-variety codes.}

{\author{Chiara Marcolla}} {\tt{(chiara.marcolla@unitn.it)}}\\
{Department of Mathematics, University of Trento, Italy}

{\author{Marco Pellegrini}} {\tt{(pellegrini@mail.dm.unipi.it)}}\\
{Department of Mathematics, University of Pisa, Italy}

{\author{Massimiliano Sala}} {\tt{(maxsalacodes@gmail.com)}}\\
{Department of Mathematics, University of Trento, Italy}

\runauthor{C.~Marcolla, M.~Pellegrini, M.~Sala}


\begin{abstract}
For any affine-variety code we show how to construct an ideal
whose solutions correspond to codewords with any assigned weight.
We classify completely the intersections of the Hermitian curve with
lines and parabolas (in the $\FQ$ affine plane).
Starting from both results, we are able to obtain geometric
characterizations for
small-weight codewords for some families of Hermitian codes over any
$\FQ$. From the geometric characterization,
we obtain explicit formulae.
In particular, we determine the number of minimum-weight codewords for all
Hermitian codes with $d\leq q$ and  all second-weight codewords for
distance-$3,4$ codes.
\end{abstract}
 

\begin{keyword}
Affine-variety code, hamming weight,  Hermitian code, Hermitian curve,  linear code, minimum-weight words,  parabola.
\end{keyword}


\end{frontmatter}


\section{Introduction}
\label{intro}

For any $q$, the \textit{Hermitian curve} $\He$ is the planar curve
defined over $\FQ$ by the affine equation $x^{q+1}=y^q+y$.

This curve has genus $g=\frac{q(q-1)}{2}$ and has $q^3$ rational affine
points, plus one point
at infinity, so it has $q^3+1$ rational points over $\FQ$ and
therefore it is a maximal curve
\cite{CGC-alg-art-rucsti94}.
This is the best known example of maximal curve and there is a vast
literature on its properties, see \cite{CGC-cod-book-hirschfeld2008algebraic} for a recent survey.
Moreover, the Goppa code \cite{CGC-cd-book-goppa} constructed on this curve
is by far the most studied, due to the simple basis of its
Riemann-Roch space \cite{CGC-cd-book-stich}, which can be written explicitly. The Goppa construction has been
generalized in \cite{CGC-cd-art-lax} to the so-called
affine-variety codes.

Although a lot of research has been devoted to geometric properties of
$\He$, here we present in this paper a  classification result,
providing for any $q$ the number of possible intersection points
between any parabola and $\He$. Moreover, we can characterize precisely
the parabolas obtaining a given intersection number and so we can count them.

In this paper we also provide an algebraic and geometric description
for codewords of a given weight belonging to any fixed
affine-variety codes. The specialization of our results to the
Hermitian case permits us to give explicit formulae for the number
of some small-weight codewords.
We expand on our 2006 previous result \cite{CGC-cod-misc-salpell06}, where we proved the intimate connection between curve intersections and minimum-weight codewords.

The paper is organized as follows:
\begin{itemize}
\item In Section \ref{pre} we provide our notation, our
first preliminary results on the algebraic characterization of
fixed-weight codewords of any-affine variety codes and some easy results
on the intersection between the Hermitian curve and any line.

\item In Section \ref{parabole}  the main result is Theorem~\ref{teo.principe}, where we provide a complete classification of
intersections between $\He$ and any parabola $y=ax^2+bx+c$ ($a\not=0$).
The proof is divided in three main parts. In the first part of Section
\ref{parabole} we lay down some preliminary lemmas and we skecth our
proving argument, that is, the use of the authomorphism group for
$\He$. In Subsection~\ref{odd} we deal with the odd-characteristic
case and in Subsection~\ref{even} we deal with the even-characteristic case.

\item In the beginning of Section \ref{codicihermitiani} we provide a
division of Hermitian codes in four phases, which is a slight
modification of the division in \cite{CGC-cd-book-AG_HB}, and we give
our algebraic characterization of fixed-weight codewords of any Hermitian codes.
We study in deep the first phase (that is, $d\leq q$)
in Subsection~\ref{parolemin} and we use
these results  to completely classify geometrically the minimum-weight codewords
for all first-phase codes in Subsection~\ref{minimum}.
In Subsection~\ref{secondweight} we can count some special configurations 
of second weight codewords for any first-phase code and finally in
Subsection \ref{complete} we can count the exact number of
second-weight codewords for the special case when $d=3,4$.
Most results in this section rely on our results on intersection
properties of $\He$ (with some special conics).

\item In Section~\ref{comp} we show how we computed specific examples
and tested experimentally all our counting results in several
finite fields. We provide all details to let the reader check
our computations.

\item In Section \ref{conc} we draw some conclusions and propose some
open problems.

\end{itemize}


\section{Preliminary results}
\label{pre}
\subsection{Known facts on Hermitian curve and Affine-variety code}

From now on we consider $\FQ$ the finite field with $q^2$ elements, and $\Fq$ the finite field
with $q$ elements, where $q$ is a power of a prime. We call $\alpha$ a (fixed) primitive element of $\FQ$, and
we consider $\beta=\alpha^{q+1}$ as a primitive element of $\Fq$. From now on $q,q^2,\alpha$ and $\beta$ are understand.

For any $q$, the \textit{Hermitian curve} $\He=\He_q$ is the curve
defined over $\FQ$ by the affine equation
\begin{equation}\label{eqHerm}
  x^{q+1}=y^q+y
\end{equation}
This curve has genus $g = \frac{q(q-1)}{2}$ and has $n = q^3$ rational affine
points, denoted by $P_1,\ldots,P_n$. For any $x\in\FQ$, the equation (\ref{eqHerm})
has exactly $q$ distinct solutions in $\FQ$. The curve contains also one point
at infinity $P_{\infty}$, so it has $q^3+1$ rational points over $\FQ$ \cite{CGC-alg-art-rucsti94}.\\

Let $k \geq 1$. For any ideal $I$ in a polynomial ring $\Fq[X]$, where
$X=\{x_1,\dots,x_k\}$, we denote by $\mathcal{V}(I)\subset (\overline{\mathbb{F}}_q)^k$ its variety, that is, the set of its common roots. For any $Z\subset
(\overline{\mathbb{F}}_q)^k$ we denote by $\mathcal{I}(Z)\subset \Fq[X]$ the vanishing ideal of $Z$, that is, $\mathcal{I}(Z)=\{f\in \Fq[x]\mid f(Z)=0\}$.\\

Let $g_1,\ldots,g_s \in \Fq[X]$, we denote by $I=\langle g_1,\ldots,g_s\rangle$
the ideal generated by the $g_i$'s. Let $\{x_1^q-x_1,\ldots,x_k^q-x_k\}\subset I$.
Then $I$ is zero-dimensional and radical \cite{CGC-alg-art-seidenberg1}. Let $\mathcal{V}(I)=\{P_1,\dots,P_n\}$.
We have an isomorphism of $\Fq$ vector spaces (an \textit{evaluation map}):
\begin{equation}\label{eval}
\begin{array}{rccl}
  \phi:R= &\Fq[x_1,\dots,x_k]/I & \longrightarrow & (\Fq)^n\\
  & f & \longmapsto & (f(P_1),\dots,f(P_n)).
\end{array}    
\end{equation}
Let $L \subseteq R$ be an $\Fq$ vector subspace of $R$ with dimension $r$.

\begin{definition}
The {\bf affine--variety code} $C(I,L)$ is the image $\phi(L)$ and the
affine--variety code $C^{\perp}(I,L)$ is its dual code.
\end{definition}
Our definition is slightly different from to that in \cite{CGC-cd-art-lax}
and follows instead that in \cite{CGC-cd-prep-manumaxchiara12}.
Let $L$ be linearly generated by $b_1,\dots,b_r$ then the matrix
$$H=\left(
\begin{array}{cccc}
b_1(P_1) & b_1(P_2) & \dots & b_1(P_n)\\
\vdots & \vdots & \cdots & \vdots\\
b_r(P_1) & b_r(P_2) & \dots & b_r(P_n)
\end{array}
\right)$$
is a generator matrix for $C(I,L)$ and a parity--check matrix for $C^{\perp}(I,L)$. 

For more recent results on affine-variety codes see \cite{CGC-cd-book-Geil08,CGC-cd-prep-manumaxchiara12,CGC-cd-art-lax11}.

\subsection{First results on words of given weight}

Let $0\leq w\leq n$ and $C$ be a linear code. The 
$A_w(C)=|\{c\in C \mid \mathrm{w}(c)=w\}|$.\\
Let $\bar{z}\in(\Fq)^n$, $\bar{z}=(\bar{z}_1,\ldots,\bar{z}_n)$. Then
\begin{equation}\label{eqtot}
  \bar{z}\in C(I,L)^{\perp}\iff H\bar{z}^{T}=0\iff\sum_{i=1}^n\bar{z}_i b_j(P_i)=0 \quad j=1,\dots,r
\end{equation}
\begin{proposition}[\cite{CGC-cod-misc-pell06}]
\label{proptot}
Let $1\leq w\leq n$.\\ Let $J_w$ be the ideal in $\Fq[x_{1,1},\ldots x_{1,k},\ldots,x_{w,1},\ldots x_{w,k},z_1,\ldots,z_w]$ generated by 
\begin{align}\label{e1}
& \sum_{i=1}^wz_ib_j(P_i) \quad j=1,\ldots, r\\
\label{e2}
& g_h(x_{i,1},\ldots x_{i,k}) \quad i=1,\dots,w \textrm{ and } h=1,\ldots,s\\
\label{e3}
& z_i^{q-1}-1\quad i=1,\dots,w \\
\label{e4}
& \prod_{1 \leq l \leq k}((x_{j,l}-x_{i,l})^{q-1}-1) \quad 1 \leq j<i \leq w.
\end{align}
Then any solution of $J_w$ corresponds to a codeword of $C^{\perp}(I,L)$ with weight $w$. Moreover,
$$A_w(\mathcal{C}^{\perp}(I,L))=\frac{|\mathcal{V}(J_{w})|}{w!}.$$
\end{proposition}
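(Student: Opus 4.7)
The plan is to set up a weight-preserving bijection between codewords of weight $w$ in $\mathcal{C}^{\perp}(I,L)$ and the $S_w$-orbits on the solution set $\mathcal{V}(J_w)$, where the symmetric group $S_w$ acts by permuting the $w$ blocks of variables $(x_{i,1},\ldots,x_{i,k},z_i)$. The counting formula $A_w(\mathcal{C}^{\perp}(I,L)) = |\mathcal{V}(J_w)|/w!$ will then be an immediate consequence of the freeness of this action, which itself is a direct consequence of the distinctness conditions (\ref{e4}).

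The first step is to read off what each block of generators contributes. The polynomials in (\ref{e2}) force each tuple $(\bar x_{i,1},\ldots,\bar x_{i,k})$ to be a common zero of the $g_h$, i.e.\ a point of $\mathcal{V}(I)$; and since $\{x_l^q-x_l\} \subset I$, each coordinate of every such point automatically lies in $\Fq$. So each block encodes one of the points $P_1,\ldots,P_n$. The generators in (\ref{e3}) are equivalent to $\bar z_i \in \Fq^*$, since $\Fq^*$ is exactly the zero set of $t^{q-1}-1$. Given that each $\bar x_{j,l}-\bar x_{i,l}$ lies in $\Fq$, the factor $(\bar x_{j,l}-\bar x_{i,l})^{q-1}-1$ vanishes iff $\bar x_{j,l}\neq \bar x_{i,l}$, so the product in (\ref{e4}) vanishes iff the two points differ in at least one coordinate, i.e.\ are distinct; hence (\ref{e4}) forces the $w$ points to be pairwise distinct. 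Finally, the relations in (\ref{e1}) are, in view of (\ref{eqtot}), exactly the membership conditions in $\mathcal{C}^{\perp}(I,L)$ for the vector $\bar c\in(\Fq)^n$ built by placing $\bar z_i$ at the coordinate corresponding to the point encoded in the $i$-th block and $0$ elsewhere.

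Combining these observations, the map sending a solution of $J_w$ to such a vector $\bar c$ is well defined and its image lies in the set of weight-$w$ codewords of $\mathcal{C}^{\perp}(I,L)$ (weight is exactly $w$ because the $w$ points are distinct and all $w$ coefficients are nonzero). Conversely, every codeword of weight exactly $w$ arises this way: choose any ordering of its support and read off the corresponding nonzero $\Fq^*$-coefficients. Two solutions give the same codeword iff they differ by a permutation of their $w$ blocks, and by the distinctness condition this $S_w$-action is free, so each codeword has exactly $w!$ preimages.

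The only real obstacle is the verification in (\ref{e4}): the distinctness gadget only works because the $g_h$-plus-$I$ setup guarantees every coordinate lands in $\Fq$, where the identity $a^{q-1}=1 \Leftrightarrow a\neq 0$ is available. Once this is pinned down, no further computation is needed; the proposition follows from counting orbits of a free $S_w$-action.
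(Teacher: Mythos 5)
Your proposal is correct and follows essentially the same route as the paper: read off the meaning of each generator block (\ref{e1})--(\ref{e4}), build the correspondence between solutions and weight-$w$ codewords, and divide $|\mathcal{V}(J_w)|$ by $w!$ using the permutation action on the $w$ blocks of variables. Your explicit remark that the $S_w$-action is free precisely because (\ref{e4}) forces the $w$ points to be distinct is a small but welcome sharpening of the paper's ``by invariance of $J_w$ we have $w!$ distinct solutions'' step.
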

\begin{proof}
Let $\sigma$ be a permutation, $\sigma\in S_n$. It induces a permutation
$\hat{\sigma}$ acting over $\{x_{1,1},\ldots, x_{1,k},\ldots,x_{w,1},\ldots x_{w,k},z_1,\ldots,z_w\}$
as $\hat{\sigma}(x_{i,l})=x_{\hat{\sigma}(i),l}$ and $\hat{\sigma}(z_{i})=z_{\hat{\sigma}(i)}$.
It is easy to show that $J_w$ is invariant w.r.t. any $\hat{\sigma}$,
since each of (\ref{e1}), (\ref{e2}), (\ref{e3}) and (\ref{e4}) is so.\\

Let $Q=(\overline{x}_{1,1},\ldots \overline{x}_{1,k},\ldots,\overline{x}_{w,1},\ldots
\overline{x}_{w,k},\overline{z}_1,\ldots,\overline{z}_w)\in\mathcal{V}(J_w)$.
We can associate a codeword to $Q$ in the following way.
For each $i=1,\ldots,w$, $P_{r_i}=(\overline{x}_{i,1},\ldots\overline{x}_{i,k})$
is in $\mathcal{V}(I)$, by (\ref{e2}). We can assume $r_1<r_2<\ldots<r_w$, via a permutation
$\hat{\sigma}$ if necessary. Note that (\ref{e4}) ensures that for each $(i,j)$,
with $i\ne j$, we have $P_{r_i}\ne P_{r_j},$ since there is a $l$ such that
$x_{i,l}\ne x_{j,l}$. Since $\overline{z}_i^{q-1}=1$ (\ref{e3}),
$\overline{z}_i\in \Fq\setminus\{0\}$. Let $c\in(\Fq)^n$ be
$$c=(0,\ldots,0,\underset{\underset{P_{r_1}}{\uparrow}}{\overline{z}_{1}},
0,\dots,0,\underset{\underset{P_{r_i}}{\uparrow}}{\overline{z}_i},0,\dots,0,
\underset{\underset{P_{r_w}}{\uparrow}}{\overline{z}_w},0,\dots,0).$$
We have that $c\in\mathcal{C}^{\perp}(I,L)$, since (\ref{e1}) is equivalent to (\ref{eqtot}).

\noindent Reversing the previous argument, we can associate to any codeword a solution
of $J_w$. By invariance of $J_w$, we actually have $w!$ distinct solutions for any codeword.
So, to get the number of codewords of weight $w$, we divide $|\mathcal{V}(J_{w})|$ by $w!$.
\end{proof}
Note that this approach is a generalization of the approach in \cite{CGC-cd-art-short}
to determinate the number of words having given weight for a cyclic code.

\subsection{Intersection between the Hermitian curve $\He$ and a line}
\label{rette}

We consider the norm and the trace, the two functions defined as follows. 
\begin{definition}\label{nt}
The \textbf{norm} $\mathrm{N}^{\mathbb{F}_{q^m}}_{\Fq}$ and the \textbf{trace}
$\mathrm{Tr}^{\mathbb{F}_{q^m}}_{\Fq}$ are two functions from $\mathbb{F}_{q^m}$ to
$\Fq$ such that
$$\N{\mathbb{F}_{q^m}}{\Fq}{x}=x^{1+q+\dots+q^{m-1}} \qquad
\Tr{\mathbb{F}_{q^m}}{\Fq}{x}=x+x^{q}+\dots+x^{q^{m-1}}$$
\end{definition}%

We denote with N and Tr, respectively, the norm and the trace from $\FQ$ to $\Fq$.
It is clear that $\He=\{\No(x)=\Tra(y)\mid x,y\in\FQ\}$.\\
We can define a similar curve $\mathcal H'=\{\No(x)=-\Tra(y)\mid x,y\in\FQ\}$ and, using the next lemma,
it is easy to see that also  $\mathcal H'$ contains $q^3$ affine rational points.
A well-known fact is the following \cite{CGC-cd-book-niederreiter}.
\begin{lemma}\label{solNTR}
For any $t\in\Fq$, the equation $\Tra(y)=y^q+y=t$ has exactly $q$ distinct solutions
in $\FQ$. The equation $\No(x)=x^{q+1}=t$ has exactly $q+1$ distinct solutions,
if $t\ne 0$, otherwise it has just one solution.
\end{lemma}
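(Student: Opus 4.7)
The plan is to handle the two equations separately, exploiting additive structure for the trace and multiplicative structure for the norm.

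For the trace equation $y^q+y=t$, I would note that the map $\Tra:\FQ\to\FQ$, $y\mapsto y^q+y$, is $\Fq$-linear (indeed $\F_p$-linear). The polynomial $y^q+y$ has degree $q$, so the kernel has at most $q$ elements in $\FQ$. On the other hand, the kernel is an $\Fq$-subspace of $\FQ$, and $\Tra$ takes values in $\Fq$ because $(\Tra(y))^q=y^{q^2}+y^q=y+y^q=\Tra(y)$. To prove surjectivity of $\Tra:\FQ\to\Fq$, I would argue that $\Tra$ is not the zero map (otherwise every $y\in\FQ$ would satisfy the degree-$q$ equation $y^q+y=0$, forcing $|\FQ|\leq q$, a contradiction), so its image is a nonzero $\Fq$-subspace of $\Fq$, hence all of $\Fq$. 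Rank-nullity then gives $|\ker\Tra|=q^2/q=q$, and every fiber $\Tra^{-1}(t)$ for $t\in\Fq$ is a coset of this kernel, so has exactly $q$ elements.

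For the norm equation $x^{q+1}=t$, the case $t=0$ is immediate since $\FQ$ is a field and $x^{q+1}=0$ forces $x=0$. For $t\neq 0$, I would use multiplicativity: $\No:\FQ^\ast\to\Fq^\ast$ is a group homomorphism (note $\No(x)=x^{q+1}\in\Fq$ because $(x^{q+1})^q=x^{q^2}\cdot x^q=x\cdot x^q=x^{q+1}$). The key observation is that $\FQ^\ast$ is cyclic of order $q^2-1=(q-1)(q+1)$, so the equation $x^{q+1}=1$ has exactly $\gcd(q+1,q^2-1)=q+1$ solutions. Thus $|\ker\No|=q+1$, and by the first isomorphism theorem $|\mathrm{Im}\,\No|=(q^2-1)/(q+1)=q-1=|\Fq^\ast|$, so $\No$ is surjective. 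Hence for every $t\in\Fq^\ast$ the fiber $\No^{-1}(t)$ is a coset of $\ker\No$ and contains exactly $q+1$ elements.

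There is no real obstacle here; the statement is a standard consequence of the linearity of the trace and the multiplicativity of the norm, together with the basic fact that $\FQ^\ast$ is cyclic. The only point requiring a tiny bit of care is the surjectivity of $\Tra$ onto $\Fq$ (and of $\No$ onto $\Fq^\ast$), which in both cases follows from a short counting argument comparing kernel size to the degree of the defining polynomial.
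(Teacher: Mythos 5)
Your proof is correct and follows essentially the same route as the paper: linearity plus a dimension count for the trace equation, and the multiplicative structure of $\FQ^\ast$ for the norm equation. The only differences are cosmetic: you supply the surjectivity argument for $\Tra$ (which the paper simply asserts as well known) and count the norm fibers via $|\ker \No|=\gcd(q+1,q^2-1)=q+1$ and cosets, whereas the paper exhibits the $q+1$ solutions explicitly as $x=\alpha^{i+j(q-1)}$.
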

\begin{proof}
The trace is a linear surjective function between two $\Fq$-vector spaces of dimension,
respectively, $2$ and $1$. Thus, $\dim(\ker(\Tra))=1$, and this means that for any
$t\in\Fq$ the set of solutions of the equation $\Tra(y)=y^q+y=t$ is non-empty and then
it has the same cardinality of $\Fq$, that is, $q$.

The equation $x^{q+1}=0$ has obviously only the solution $x=0$. If $t\ne 0$, since
$t\in\Fq$, we can write $t=\beta^i$, so that $x=\alpha^{i+j(q-1)}$ are all solutions.
We can assign $j=0,\ldots,q$, and so we have $q+1$ distinct solutions.
\end{proof}

\begin{lemma}\label{interette} Let $\mathcal{L}$ be any vertical line $\{x=t\}$,
with $t\in\FQ$. Then $\mathcal{L}$ intersects $\He$ in $q$ affine points.
\end{lemma}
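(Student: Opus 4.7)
The plan is to reduce the statement directly to Lemma~\ref{solNTR} via the defining equation of $\He$. First I would substitute $x=t$ into the equation $x^{q+1}=y^q+y$ of $\He$, which turns the intersection problem into counting the $y\in\FQ$ that satisfy
\[
y^q+y \;=\; t^{q+1}.
\]
In the notation of Definition~\ref{nt}, this reads $\Tra(y)=\No(t)$.

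Next I would verify the hypothesis needed to apply Lemma~\ref{solNTR}: the right-hand side $\No(t)=t^{q+1}$ lies in $\Fq$ for any $t\in\FQ$ (indeed $(t^{q+1})^q = t^{q^2+q} = t\cdot t^{q+1}\cdot t^{q-1} \cdot \ldots$ — more simply, $(t^{q+1})^q=t^{q+1}$ since $t^{q^2}=t$). So $\No(t)\in\Fq$, which is exactly the regime in which Lemma~\ref{solNTR} guarantees that $\Tra(y)=\No(t)$ has precisely $q$ distinct solutions $y\in\FQ$.

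Finally, each such solution $y$ yields one affine point $(t,y)$ of $\mathcal{L}\cap\He$, and conversely any affine point of $\mathcal{L}\cap\He$ has first coordinate $t$ and second coordinate satisfying the above equation. Hence $|\mathcal{L}\cap\He|=q$. There is no real obstacle here: the only thing worth stating explicitly is the observation $\No(t)\in\Fq$, without which Lemma~\ref{solNTR} would not apply.
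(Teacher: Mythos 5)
Your proposal is correct and follows exactly the paper's argument: substitute $x=t$ into the curve equation, observe that $t^{q+1}=\No(t)\in\Fq$, and invoke Lemma~\ref{solNTR} to get exactly $q$ solutions in $y$. The only difference is that you spell out the verification $(t^{q+1})^q=t^{q+1}$ explicitly, which the paper leaves implicit.
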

\begin{proof}
For any $t\in\FQ$, $t^{q+1}\in\Fq$, and so the equation $y^q+y=t^{q+1}$
has exactly $q$ distinct solutions by applying Lemma \ref{solNTR}.
\end{proof}

\begin{lemma}\label{intertot}
In the affine plane $(\FQ)^2$, the total number of non-vertical lines is $q^4$. Of these,
$(q^4-q^3)$ intersect $\He$ in $(q+1)$ points and $q^3$ are tangent to $\He$, i.e. they
intersect $\He$ in only one point. 
\end{lemma}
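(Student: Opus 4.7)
The plan is to parametrize a non-vertical line as $y=ax+b$ with $a,b\in\FQ$, which immediately gives $q^4$ such lines. To count affine intersections with $\He$, I substitute $y=ax+b$ into $x^{q+1}=y^q+y$ to get
\[
x^{q+1}-a^qx^q-ax=b^q+b=\Tra(b),
\]
an equation whose two sides lie in $\Fq$ (the left since $x^{q+1}=\No(x)$ and $a^qx^q+ax=\Tra(ax)$; the right by definition of $\Tra$). So what must be counted is the number of $x\in\FQ$ solving $\psi_a(x)=\Tra(b)$, where $\psi_a(x):=x^{q+1}-a^qx^q-ax$.

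The key algebraic step is a ``completion of the norm square'': substituting $x=z+a^q$ and using $a^{q^2}=a$, a direct expansion shows that both the $z^q$ and the $z$ coefficients vanish identically, leaving
\[
\psi_a(z+a^q)=z^{q+1}-a^{q+1}=\No(z)-\No(a).
\]
Hence $\psi_a(x)=\Tra(b)$ is equivalent to $\No(x-a^q)=\No(a)+\Tra(b)$. Since $\No(a)+\Tra(b)\in\Fq$, Lemma \ref{solNTR} applies directly: the equation has $q+1$ solutions $x\in\FQ$ when $\No(a)+\Tra(b)\neq 0$ and exactly one solution when $\No(a)+\Tra(b)=0$.

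To finish, I count the ``tangent'' lines. Fix $a\in\FQ$ ($q^2$ choices); the condition $\Tra(b)=-\No(a)\in\Fq$ admits, by Lemma \ref{solNTR}, exactly $q$ values of $b\in\FQ$. This yields $q^2\cdot q=q^3$ lines meeting $\He$ in a single affine point and consequently $q^4-q^3$ lines meeting $\He$ in $q+1$ points. The terminology \emph{tangent} is justified a posteriori by Bezout: a non-vertical affine line does not pass through the unique point at infinity $P_\infty=[0:1:0]$ of the projective Hermitian curve, so all $q+1$ intersections (counted with multiplicity) lie in the affine plane; a line with only one affine intersection therefore meets $\He$ at that point with the full multiplicity $q+1$ and is in particular tangent there.

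The only nontrivial step is the translation identity $\psi_a(z+a^q)=\No(z)-\No(a)$; everything else is a mechanical application of Lemma \ref{solNTR}. I expect no real obstacle, since the calculation works because $a^q$ is exactly the unique ``center'' making the linear and Frobenius-linear terms in $\psi_a$ simultaneously disappear, and this is really a shadow of the affine translation symmetries of $\He$.
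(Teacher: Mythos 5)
Your proof is correct and follows essentially the same route as the paper: the identity $\psi_a(z+a^q)=\No(z)-\No(a)$ is exactly the paper's rewriting of the intersection equation as $(x-a^q)^{q+1}=c$ with $c=a^{q+1}+b^q+b$, after which both arguments reduce to Lemma \ref{solNTR}. The only cosmetic differences are that you count the tangent lines by fixing $a$ and counting the $q$ values of $b$ with $\Tra(b)=-\No(a)$ (the paper instead identifies the pairs $(a,b)$ with $c(a,b)=0$ with the $q^3$ affine points of $\He'$), and you add a Bezout justification of the word ``tangent'' that the paper omits.
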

\begin{proof}
Let $\mathcal{L}$ any non-vertical line, then $\mathcal{L}=\{y=ax+b\}$, with $a,b\in\FQ$. We have $q^2$ choices
for both $a$ and $b$, so the total number is $q^4$. Then
$$\He\cap\mathcal{L}=\{(x,ax+b)\mid a^qx^q+b^q+ax+b=x^{q+1},\,\, x\in\FQ\}.$$
Let $c=c(a,b)=a^{q+1}+b^q+b$, then $c\in\Fq$. We have two distinct cases:
\begin{itemize}
  \item $c=0$. Then $a^qx^q+b^q+ax+b=x^{q+1}$ becomes $a^qx^q-a^{q+1}+ax=x^{q+1}$,
  which gives $x=a^q$, that is, $\mathcal{L}$ is tangent.
  \item $c\ne 0$. Then $a^qx^q+b^q+ax+b=x^{q+1}$ becomes $x^{q+1}-a^qx^q+a^{q+1}-ax=c$,
  which gives $(x-a^q)^{q+1}=c$. Since $c=(\alpha^{q+1})^r$ for $1\leq r\leq q-1$,
  we have $x=a^q+\alpha^{r+i(q-1)}$ for any $0\leq i\leq q$.
\end{itemize}
The number of pairs $(a,b)$ satisfying $c(a,b)=0$ is $q^3$, because they correspond to the affine points of $\He'$, and those satisfying $c\ne 0$ are $(q^4-q^3)$.
\end{proof}

\begin{corollary}\label{interettey} Let $\mathcal{L}$ be any horizontal line $\{y=b\}$,
with $b\in\FQ$. Then if $\Tra(b)=0$, $\mathcal{L}$ intersects $\He$ in one affine point, otherwise, if $\Tra(b)\ne 0$, $\mathcal{L}$ intersects $\He$ in $q+1$ affine points.
\end{corollary}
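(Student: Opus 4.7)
The plan is to substitute the line equation into the Hermitian equation and reduce the problem to counting solutions of a norm equation, then invoke Lemma~\ref{solNTR}.

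First I would write $\He\cap\mathcal{L}=\{(x,b)\mid x^{q+1}=b^q+b,\ x\in\FQ\}$. Since $b\in\FQ$, the right-hand side is exactly $\Tra(b)\in\Fq$, so the intersection points are in bijection with the solutions $x\in\FQ$ of the norm equation $\No(x)=x^{q+1}=\Tra(b)$.

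Next I would apply Lemma~\ref{solNTR} directly. In the case $\Tra(b)=0$, the equation $x^{q+1}=0$ has the unique solution $x=0$, giving a single intersection point $(0,b)$. In the case $\Tra(b)\neq 0$, we have $\Tra(b)\in\Fq\setminus\{0\}$, and the norm equation $x^{q+1}=\Tra(b)$ has exactly $q+1$ distinct solutions in $\FQ$, yielding $q+1$ intersection points.

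There is essentially no obstacle: the statement follows mechanically once one notices that the fiber of $\He$ over a fixed $y=b$ is governed by $\No(x)=\Tra(b)$, and Lemma~\ref{solNTR} already enumerates the solutions of such norm equations. The only thing worth double-checking is that $\Tra(b)$ indeed lies in $\Fq$ (so that Lemma~\ref{solNTR} applies), which is immediate from the definition of the trace.
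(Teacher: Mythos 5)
Your proof is correct. It differs from the paper's in the lemma it invokes: the paper disposes of this corollary in one line by specializing Lemma~\ref{intertot} (the classification of all non-vertical lines $y=ax+b$) to $a=0$, noting that the quantity $c=a^{q+1}+b^q+b$ appearing there reduces to $\Tra(b)$. You instead give a self-contained argument straight from the curve equation: substituting $y=b$ yields the norm equation $x^{q+1}=\Tra(b)$ with $\Tra(b)\in\Fq$, and Lemma~\ref{solNTR} counts its solutions as $1$ or $q+1$ according to whether $\Tra(b)$ vanishes. The two routes are mathematically equivalent --- the $a=0$ branch of the proof of Lemma~\ref{intertot} is exactly the equation $x^{q+1}=\Tra(b)$, and Lemma~\ref{intertot} itself rests on the same norm-counting fact --- but yours is the more elementary and direct derivation, while the paper's buys brevity by reusing the already-established general statement. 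Both are complete and correct.
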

\begin{proof}
Apply Lemma \ref{intertot} with $a=0$.
\end{proof}


\newpage
\section{Intersection between Hermitian curve $\He$ and parabolas}
\label{parabole}
We recall the definition of the Hermitian curve $\He$ on $\FQ$, i.e.
$$x^{q+1}=y^q+y.$$ 

Given two curves $X$ and $Y$ lying in the affine plane $(\Fq)^2$ it is
interesting to know the number of (affine plane) points
that lie in both curves, disregarding multiplicity and other similar notions.
We call this number \textit{their planar intersection}.
This knowledge may have applications for the codes constructed from  $X$ and $Y$.
As regards $\He$, it is interesting for coding theory applications
\cite{CGC-cod-art-couvreur2011dual,CGC-cod-art-ballico2012goppa,CGC-cod-art-ballico2012geometry,CGC-cod-art-fontanari2011geometry}
to consider an arbitrary parabola $y=ax^2+bx+c$ over
$\FQ$ and to compute their planar intersection.
Moreover, it is essential to know precisely the number of parabolas
having a given planar intersection with $\He$.
Only partial results were known \cite{CGC-alg-art-dondur10,CGC-alg-art-dondurkor09}, 
we present here for the first time a complete classification in the following theorem.\\

\begin{theorem}\label{teo.principe}
For $q$ odd, the only possible planar intersections of $\He$ and a parabola are $\{0,1,q-1,q,$ $q+1,2q-1,2q\}$.
For any possible mutual intersection we provide in the next tables the exact number of parabolas sharing that value.

{\small{\begin{table}[h]
\centering
\begin{tabular}{|l||c|c|c|}
\hline
$\#\He\cap$ parabola & 0 & 1 & $q-1$  \\
\hline
$\#$ parabolas & $q^2(q+1)\frac{(q-1)}{2}$ & $q^2(q+1)\frac{q(q-3)}{2}$ & $q^2(q+1)\frac{q(q-1)^2}{2}$  \\
\hline
\multicolumn{4}{c}{$ \quad$} \\
\hline
$\# \He\cap$ parabola & $q$ &\multicolumn{2}{c|}{ $q+1$} \\
\hline
$\#$ parabolas & $q^2(q+1)(q^2-q+1)$ &\multicolumn{2}{c|}{$q^2(q+1)\frac{q(q-1)(q-3)}{2}$}  \\
\hline
\multicolumn{4}{c}{$ \quad$} \\
\hline
$\# \He\cap$ parabola & $2q-1$ & \multicolumn{2}{c|}{ $2q$}\\
\hline
$\#$ parabolas & $q^2(q+1)\frac{q(q-1)}{2}$ & \multicolumn{2}{c|}{$q^2(q+1)\frac{(q-1)}{2}$} \\
\hline
\end{tabular}
\end{table}}}

For $q$ even, the only possible planar intersections  of $\He$ and a parabola  are $\{1,q-1,q+1,2q-1\}$.
For any possible mutual intersection we provide in the next tables the exact number of parabolas sharing that value.

\begin{table}[h]
\centering
\begin{tabular}{|l||c|c|}
\hline
$\# \He\cap$ parabola &  1 & $q-1$  \\
\hline
$\#$ parabolas & $q^3(q+1)(\frac{q}{2}-1)$ & $q^3(q+1)(q-1)\frac{q}{2}$\\
\hline
\multicolumn{3}{c}{$ \quad$} \\
\hline
$\# \He\cap$ parabola &  $q+1$ & $2q-1$ \\
\hline
$\#$ parabolas & $q^3(q+1)(q-1)(\frac{q}{2}-1)$ & $q^3(q+1)\frac{q}{2}$ \\
\hline
\end{tabular}
\end{table} 

\end{theorem}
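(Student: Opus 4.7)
The plan is to reduce the general parabola $y=ax^2+bx+c$ (with $a\neq 0$) to a small list of canonical representatives by exploiting the affine automorphism group of $\He$, to count intersections for each canonical form, and then to translate orbit sizes back into the parabola counts tabulated in the theorem.

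First, I would substitute $y=ax^2+bx+c$ into $x^{q+1}=y^q+y$ to obtain the polynomial
$$f(x) = a^q x^{2q} + ax^2 + b^q x^q + bx + (c^q+c) - x^{q+1},$$
whose roots in $\FQ$ count the planar intersections. Since the Frobenius $x\mapsto x^q$ merely swaps the paired terms of $f$ on $\FQ$, one has $f(x)\in\Fq$, and the intersection condition becomes
$$\No(x)=\Tra(ax^2)+\Tra(bx)+\Tra(c).$$
For fixed $a,b$ this depends on $c$ only through $\Tra(c)\in\Fq$, so the problem reduces to understanding the fibre sizes of the map $F\colon\FQ\to\Fq$ given by $F(x)=\No(x)-\Tra(ax^2)-\Tra(bx)$.

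Next I would exploit the affine automorphisms of $\He$: translations $(x,y)\mapsto(x+\alpha,\,y+\alpha^q x+\gamma)$ with $\alpha^{q+1}=\gamma^q+\gamma$, scalings $(x,y)\mapsto(\lambda x,\lambda^{q+1}y)$ for $\lambda\in\FQ^*$, and vertical shifts $(x,y)\mapsto(x,y+\delta)$ with $\Tra(\delta)=0$. A direct calculation shows that a translation sends the coefficient triple $(a,b,c)$ to $(a,\,b-2a\alpha+\alpha^q,\,a\alpha^2-b\alpha+c-\alpha^{q+1}+\gamma)$, and a scaling sends it to $(a\lambda^{q-1},b\lambda^q,c\lambda^{q+1})$. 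Solving the $q$-linearized equation $\alpha^q-2a\alpha+b=0$ in $\FQ$ (possible whenever the associated $\Fq$-linear map $L(\alpha)=\alpha^q-2a\alpha$ is injective) brings most parabolas into the canonical form $y=ax^2+c$, and a further scaling normalizes $a$ up to its class modulo $(q-1)$-th powers in $\FQ^*$. For each canonical parabola I would count intersections by analysing $\No(x)-\Tra(ax^2)=\Tra(c)$ directly, using Lemma~\ref{solNTR} and splitting into cases according to whether $a\in\Fq$ and on the norm/trace conditions of the relevant quantities. Multiplying the canonical counts by orbit sizes under the automorphism action, and by the $q$ choices of $c$ realizing each value of $\Tra(c)$, yields the table entries; a useful sanity check is that the total sums to $q^2\cdot q^2\cdot(q^2-1)=q^4(q^2-1)$, the total number of parabolas.

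The main obstacle, and the reason the proof splits as indicated in the excerpt into odd and even subsections, is that the linearized equation $\alpha^q-2a\alpha+b=0$ behaves very differently in the two characteristics. For $q$ odd the map $L$ is bijective precisely when $\No(2a)\neq 1$, forcing two sub-cases for the normalization of $b$; for $q$ even the $2a$ term vanishes so $b$ is always killed by $\alpha=b^{1/q}$, but in return $\Tra(ax^2)$ becomes an $\Fq$-additive function rather than a genuine quadratic form, and the resulting polynomial $f$ acquires extra structure that rules out the intersection values $0$, $q$ and $2q$. Carefully identifying the canonical representatives in each characteristic, counting their orbits without overcounting, and verifying the tabulated multiplicities is where the bulk of the combinatorial work lies, and that is what I expect Subsections~\ref{odd} and~\ref{even} to carry out.
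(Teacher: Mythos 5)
Your proposal follows essentially the same route as the paper: substitute to get $\No(x)=\Tra(ax^2)+\Tra(bx)+\Tra(c)$, use the translation subgroup of $Aut(\He)$ to kill $b$ via the linearized equation $\gamma^q-2a\gamma=b$ (solvable exactly when $\No(2a)\neq 1$, i.e.\ $\Delta=1-4\No(a)\neq 0$), analyse the canonical forms $y=ax^2+c$ through the fibres of $F_a(x)=\No(x)-\Tra(ax^2)$, and recover the table by orbit counting with the sanity check that the totals sum to $q^4(q^2-1)$. The only place your sketch is thinner than the paper is the exceptional odd-characteristic family with $\No(2a)=1$ and $2ab^q+b\neq 0$, which cannot be normalized to $b=0$ and which the paper must treat separately as $y=a(x+v)^2$ (shown to give exactly $q$ intersections, with the count obtained by difference); you acknowledge this sub-case exists but it is where most of the remaining work lies.
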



We begin with some simple lemmas.

\begin{lemma}\label{sol}
Let $t\in\FQ^*$, then there is a solution of $x^{q-1}=t$ if and only if $\No(t)=1$.
In this case, $x^{q-1}=t$ has exactly $(q-1)$ distinct solutions in $\FQ$.
\end{lemma}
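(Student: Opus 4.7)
The plan is to exploit the cyclic structure of $\FQ^*$ together with the fact that $x\mapsto x^{q-1}$ is a group homomorphism $\FQ^*\to\FQ^*$, and then identify the image of this map with the kernel of the norm.

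First, I would recall that $\FQ^*$ is cyclic of order $q^2-1=(q-1)(q+1)$, generated by the fixed primitive element $\alpha$. Consider the endomorphism $\varphi:\FQ^*\to\FQ^*$ defined by $\varphi(x)=x^{q-1}$. Its kernel is $\{x\in\FQ^*\mid x^{q-1}=1\}=\Fq^*$, which has order $q-1$. By the first isomorphism theorem, $|\mathrm{Im}(\varphi)|=(q^2-1)/(q-1)=q+1$, so the image is the unique cyclic subgroup of $\FQ^*$ of order $q+1$, namely the set of $(q+1)$-th roots of unity.

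Next I would characterize this image via the norm. By Definition~\ref{nt}, $\No(t)=t^{q+1}$, so $\No(t)=1$ is exactly the condition that $t$ be a $(q+1)$-th root of unity, i.e.\ $t\in\mathrm{Im}(\varphi)$. This immediately gives the equivalence: the equation $x^{q-1}=t$ has a solution in $\FQ$ if and only if $\No(t)=1$.

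Finally, when a solution $x_0$ exists, the full solution set is the coset $x_0\cdot\ker(\varphi)=x_0\cdot\Fq^*$, which has exactly $q-1$ elements, all distinct and lying in $\FQ$. There is no real obstacle here — the only thing to be a bit careful about is to state once clearly that the kernel of $x\mapsto x^{q-1}$ is exactly $\Fq^*$ (which follows because $\Fq^*$ is the unique subgroup of order $q-1$ in the cyclic group $\FQ^*$, or directly because $a\in\FQ$ satisfies $a^{q-1}=1$ iff $a\in\Fq^*$).
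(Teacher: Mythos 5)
Your proof is correct and is essentially the paper's argument recast in group-theoretic language: the paper writes the solutions explicitly as $\alpha^{\lambda+k(q+1)}$ and identifies the image of $x\mapsto x^{q-1}$ with $\{\No(t)=1\}$ by comparing cardinalities, which is exactly your kernel/image computation via the first isomorphism theorem and the coset description of the solution set.
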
 
\begin{proof}
We consider the function $f:\FQ\rightarrow\FQ$ such that $f(x)=x^{q-1}$.
We want to prove that $t\in\I(f)\iff \No(t)=1$.\\

We can note that if $t=\alpha^{\lambda(q-1)}$ for any $\lambda$, then
$x=\alpha^{\lambda}$ is a solution of $x^{q-1}=t$. We claim that the solutions are:
$$x=\alpha^{\lambda+k(q+1)} \mbox{ con } 0\leq k\leq q-2.$$
In fact $(\alpha^{\lambda+k(q+1)})^{q-1}=\alpha^{\lambda(q-1)}=t$ per $0\leq k\leq q-2$.
So if the equation $x^{q-1}=t$ has at least one solution,
then it has at least $q-1$ distinct solutions. Since the equation degree is $q-1$, then it has exactly  $q-1$ distinct solutions.\\

Further we can note that if $t\in\I(f)\implies \No(t)=1$. In fact  
$\No(t)=t^{q+1}=(x^{q-1})^{q+1}=1$.

So $\I(f)\subset\{\No(t)=1\}$. But $|\I(f)|=q+1$ and $|\{\No(t)=1\}|=q+1$
hence  $\I(f)=\{\No(t)=1\}$.
\end{proof}

\begin{remark}
We note that $4\No(a)=\No(2a)$ for any $a\in\FQ$.
\end{remark}

\begin{lemma}\label{aquad}
If $q$ is odd and $4\No(a)=1$ then $a$ is a square in $\FQ$.
\end{lemma}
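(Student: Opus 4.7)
The plan is to reduce to a statement about $2a$ rather than $a$, because the hypothesis $4\No(a)=1$ is really telling us something clean about $\No(2a)$. Since $q$ is odd, $2\in\Fq$, so $\No(2)=2^{q+1}=4$ and by multiplicativity of the norm,
\[
\No(2a)=\No(2)\No(a)=4\No(a)=1.
\]
So the hypothesis is equivalent to saying that $2a$ lies in the kernel of the norm map.

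Next I would invoke Lemma~\ref{sol}: since $\No(2a)=1$, there exists $x\in\FQ^{*}$ with $2a=x^{q-1}$. Because $q$ is odd, $q-1$ is even, hence
\[
2a = \bigl(x^{(q-1)/2}\bigr)^{2},
\]
which shows that $2a$ is a square in $\FQ$. It then remains only to divide by $2$, i.e.~to check that $1/2$ (equivalently $2$) is a square in $\FQ$.

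For this last point I would use the primitive element description already fixed in the paper: every element of $\Fq^{*}$ has the form $\beta^{j}=\alpha^{j(q+1)}$, and since $q+1$ is even (again using $q$ odd) the exponent $j(q+1)$ is even, so every element of $\Fq^{*}$ is a square in $\FQ^{*}$. In particular $2$ is a square in $\FQ$, whence $1/2$ is a square, and therefore
\[
a = \tfrac{1}{2}\cdot(2a)
\]
is a product of two squares in $\FQ$, hence itself a square.

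There is no genuine obstacle here: the only things to be careful about are that $q$ is odd is used twice (once to put $2$ in $\Fq^{*}$ so that $\No(2)=4$, and twice more to know that $q-1$ and $q+1$ are even, making $(q-1)/2$-th and $(q+1)/2$-th powers available). The whole argument is essentially the observation that Lemma~\ref{sol} produces $(q-1)$-th powers, and $(q-1)$-th powers in $\FQ^{*}$ are automatically squares when $\operatorname{char}\FQ\neq 2$.
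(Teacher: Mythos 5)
Your proof is correct. It does not follow the paper's route, though: the paper argues directly on the discrete logarithm of $a$ itself, writing $a=\alpha^k$ so that the hypothesis becomes $4\beta^k=1$, then using that $4=\beta^{2t}$ is a square in $\Fq$ to get $2t+k\equiv 0 \pmod{q-1}$, which forces $k$ even (as $q-1$ is even) and hence $a$ a square. You instead factor $a=\tfrac{1}{2}\cdot(2a)$ and treat the two factors separately: $\No(2a)=4\No(a)=1$ (this is exactly the Remark preceding the lemma in the paper), so by Lemma~\ref{sol} the element $2a$ is a $(q-1)$-st power and therefore a square since $q-1$ is even, while $\tfrac{1}{2}\in\Fq^*$ is a square because $\Fq^*=\langle\alpha^{q+1}\rangle$ consists of even powers of $\alpha$ when $q$ is odd. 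Both arguments ultimately rest on the same parity facts about $q\pm 1$; yours buys a tighter link to machinery already established (Lemma~\ref{sol} and the norm-one subgroup, which recur throughout Section~\ref{parabole}), at the cost of an extra step ($1/2$ being a square) that the paper's self-contained discrete-log computation avoids. There is no gap in your argument.
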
 
\begin{proof}
Let $a=\alpha^k$, so that $4\No(a)=1\implies 4\alpha^{k(q+1)}=1\implies 4\beta^k=1$.
Since $4$ is a square in $\Fq$, we have 
$$4=\beta^{2t}\implies \beta^{2t+k}=1\implies 2t+k\equiv 0\mod q-1,$$ 
so that $k$ is even and $a$ is a square.
\end{proof}

We recall a well-known result in linear algebra:

\begin{lemma}\label{lembase}
Let $f:V\rightarrow W$ be a linear function and $f(\bar a)=a\in\I(f)$.
Then $f^{-1}(a)=\bar a + \ker(f)$ and
$$\dim\ker(f)+\dim\I(f)=\dim V$$
\end{lemma}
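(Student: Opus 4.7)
The lemma has two parts, which I would handle separately.

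The plan for the first assertion is to show the two inclusions $f^{-1}(a) \supseteq \bar a + \ker(f)$ and $f^{-1}(a) \subseteq \bar a + \ker(f)$. For the former, take any $k\in\ker(f)$ and use linearity to compute $f(\bar a + k)=f(\bar a)+f(k)=a+0=a$, which places $\bar a + k$ in $f^{-1}(a)$. For the latter, take $v\in f^{-1}(a)$ and consider $v-\bar a$; linearity gives $f(v-\bar a)=a-a=0$, so $v-\bar a\in\ker(f)$ and hence $v\in \bar a+\ker(f)$. This is essentially a one-liner once linearity is invoked, so no real obstacle here.

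For the dimension formula, my strategy is the standard basis-extension argument. First I would fix a basis $\{k_1,\ldots,k_m\}$ of $\ker(f)$, where $m=\dim\ker(f)$, and then extend it to a basis $\{k_1,\ldots,k_m,v_1,\ldots,v_r\}$ of $V$, where $r=\dim V - m$. The goal is to prove that $B:=\{f(v_1),\ldots,f(v_r)\}$ is a basis of $\I(f)$, which forces $\dim \I(f)=r$ and so $\dim\ker(f)+\dim\I(f)=m+r=\dim V$.

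To prove $B$ spans $\I(f)$, I would take an arbitrary $w\in\I(f)$, write $w=f(v)$, expand $v=\sum_i \lambda_i k_i + \sum_j \mu_j v_j$ in the chosen basis, and apply $f$ (noting $f(k_i)=0$) to get $w=\sum_j \mu_j f(v_j)$. For linear independence of $B$, I would start from $\sum_j c_j f(v_j)=0$, rewrite this as $f(\sum_j c_j v_j)=0$, conclude $\sum_j c_j v_j\in\ker(f)$, and expand this kernel element in the $k_i$-basis to obtain a linear relation among all basis vectors of $V$; linear independence of the full basis then forces every $c_j=0$.

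The only subtle point, and the one I would make sure to state explicitly, is the basis-extension step: it requires that the $k_i$'s can be completed to a basis of $V$, which is standard for finite-dimensional spaces (and the ambient spaces here are ultimately finite-dimensional $\Fq$-vector spaces, so no hypothesis needs to be added). Everything else is purely mechanical application of linearity, so no real obstacle is expected.
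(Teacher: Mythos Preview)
Your argument is correct and entirely standard: the coset description of a fibre and the rank--nullity theorem proved via basis extension are exactly what one would write. There is nothing to compare against, however, because the paper does not prove this lemma at all; it is introduced with ``We recall a well-known result in linear algebra'' and stated without proof.
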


\begin{lemma}\label{lemlin}
For any $a\in\FQ$, let $f:\FQ\rightarrow\FQ$, $f(x)=2ax-x^q$. Then $f$ is $\Fq$-linear.
\end{lemma}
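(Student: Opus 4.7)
The plan is to verify the two defining properties of an $\Fq$-linear map directly from the formula $f(x)=2ax-x^q$, exploiting the fact that the $q$-th power map is the Frobenius automorphism over $\Fq$.

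First, I would establish additivity. The map $x\mapsto 2ax$ is obviously additive since multiplication distributes over addition in $\FQ$, so I only need to handle the term $x^q$. Because $\FQ$ has characteristic $p$ (where $q$ is a power of $p$), the Frobenius identity gives $(x+y)^q = x^q + y^q$ for all $x,y\in\FQ$. Combining the two contributions yields $f(x+y)=f(x)+f(y)$.

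Next, I would verify $\Fq$-homogeneity. For any $\lambda\in\Fq$, the key fact is $\lambda^q=\lambda$, which characterizes $\Fq$ as the fixed field of the Frobenius. Then $(\lambda x)^q = \lambda^q x^q = \lambda x^q$, and on the other term $2a(\lambda x)=\lambda(2ax)$, so pulling $\lambda$ outside gives $f(\lambda x)=\lambda f(x)$. Note that $\Fq$-linearity is the right notion here (not $\FQ$-linearity), precisely because $\lambda^q=\lambda$ fails for $\lambda\in\FQ\setminus\Fq$.

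There is essentially no obstacle to this proof; it is a two-line check once the Frobenius property $(x+y)^q=x^q+y^q$ and the characterization of $\Fq$ inside $\FQ$ as $\{\lambda:\lambda^q=\lambda\}$ are invoked. The only reason the lemma is worth stating is its subsequent use: it lets one apply Lemma \ref{lembase} to describe preimages of $f$ as cosets of $\ker f$, which will be essential in the dimension-counting arguments for parabola intersections in the odd-characteristic case of Theorem \ref{teo.principe}.
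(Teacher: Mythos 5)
Your proof is correct and follows essentially the same route as the paper: additivity via the Frobenius identity $(x+y)^q=x^q+y^q$ and $\Fq$-homogeneity via $\lambda^q=\lambda$ for $\lambda\in\Fq$. Nothing is missing.
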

\begin{proof}$\,$\\
$\qquad \begin{array}{lrll}
  \forall\,c,d\in\FQ & f(c+d) & = & 2ac-c^q+2ad-d^q=f(c)+f(d).\\
  \forall\,c\in\FQ\quad\forall\,k\in\Fq\,\, & f(kc) & = & 2akc-k^qc^q=2akc-kc^q=kf(c).\\
\end{array}$\\
\end{proof}

Thanks to Lemma \ref{lemlin} and Lemma \ref{lembase} we have the following corollary:

\begin{corollary}\label{corFON}
Let $f:\FQ\rightarrow\FQ$ such that $f(x)=2ax-x^q$. Then the equation $f(x)=k$
has $q$ distinct solutions if $k\in\I(f)$, otherwise it has $0$ solutions.
\end{corollary}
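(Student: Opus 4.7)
The plan is to exploit the $\Fq$-linearity of $f$ established in Lemma~\ref{lemlin}, combined with the rank-nullity relation of Lemma~\ref{lembase}, and then to compute $\ker(f)$ explicitly using Lemma~\ref{sol}.

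First, I would view $\FQ$ as a $2$-dimensional $\Fq$-vector space and invoke the linearity from Lemma~\ref{lemlin}. By Lemma~\ref{lembase}, for any $k \in \I(f)$ the fiber $f^{-1}(k)$ is a coset $\bar{x} + \ker(f)$, so the number of solutions of $f(x) = k$ equals $|\ker(f)|$, independent of the particular $k$ in the image. For $k \notin \I(f)$ the fiber is empty, which immediately yields the ``$0$ solutions'' half of the statement.

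It then remains to compute $|\ker(f)|$. The equation $2ax - x^q = 0$ is solved by $x = 0$, and for $x \neq 0$ reduces, after dividing by $x$, to $x^{q-1} = 2a$. By Lemma~\ref{sol}, the latter contributes exactly $q-1$ nonzero solutions precisely when $\No(2a) = 4\No(a) = 1$ (the very condition singled out in Lemma~\ref{aquad}), so in that regime $|\ker(f)| = 1 + (q-1) = q$, which is the claimed value.

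The only subtle point, and what I expect to be the real obstacle, is that the target value $|\ker(f)| = q$ forces the side condition $4\No(a) = 1$: when $4\No(a) \neq 1$, Lemma~\ref{sol} produces no nonzero kernel elements, so $\ker(f) = \{0\}$, in which case $f$ is an $\Fq$-bijection and $f(x) = k$ has a unique solution for every $k$. The corollary is therefore meant to apply in the regime $4\No(a) = 1$ (the setting in which it will later be coupled to parabolas via Lemma~\ref{aquad}), and once this hypothesis is made explicit the body of the proof is the short rank-nullity-plus-Lemma~\ref{sol} argument above.
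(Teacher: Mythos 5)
Your proof is correct and follows the same route as the paper, which states the corollary as an immediate consequence of Lemma~\ref{lemlin} (the $\Fq$-linearity of $f$) and Lemma~\ref{lembase} (fibers over the image are cosets of $\ker(f)$), giving no further argument. Your additional step — computing $\ker(f)$ explicitly via $x^{q-1}=2a$ and Lemma~\ref{sol} — is a genuine improvement: it exposes that the conclusion ``$q$ solutions'' holds only when $\No(2a)=4\No(a)=1$, since otherwise $\ker(f)=\{0\}$ and $f$ is a bijection with every fiber a singleton. You are right that this hypothesis is missing from the statement; the paper leaves it implicit, and indeed every subsequent invocation of Corollary~\ref{corFON} occurs in the regime $\Delta=1-4\No(a)=0$, so the corollary is used correctly even though it is stated too broadly.
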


\begin{lemma}\label{lemtraccia}
Let $y=ax^2+bx+\bar c$ and $y=ax^2+bx+c$ be two parabolas. If $\Tra(\bar c)=\Tra(c)$,
then the planar intersections between the Hermitian curve $\He$ and the parabolas are the same.
\end{lemma}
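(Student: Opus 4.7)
The plan is to substitute $y = ax^2+bx+c$ into the defining equation $x^{q+1} = y^q + y$ of $\He$ and show that the resulting equation in $x$ alone depends on $c$ only through $\Tra(c)$. Since the intersection points of $\He$ with a parabola $y=ax^2+bx+c$ are in bijection (via the $x$-coordinate) with the solutions $x\in\FQ$ of this single equation, the number of intersections will then depend only on $\Tra(c)$.

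More precisely, I would first expand $(ax^2+bx+c)^q + (ax^2+bx+c)$ using that Frobenius is additive, obtaining
\begin{equation*}
a^q x^{2q} + b^q x^q + c^q + a x^2 + b x + c.
\end{equation*}
Imposing $x^{q+1} = y^q+y$ for $(x,y)$ on the parabola gives
\begin{equation*}
x^{q+1} - a^q x^{2q} - b^q x^q - a x^2 - b x \;=\; c^q + c \;=\; \Tra(c).
\end{equation*}
Call the left-hand side $F_{a,b}(x)$; it is a polynomial depending only on $a,b$, not on $c$. Then the $x$-coordinates of the intersection points of $\He$ with the parabola $y=ax^2+bx+c$ are exactly the elements of the set $S(a,b,\Tra(c)) = \{x\in\FQ \mid F_{a,b}(x) = \Tra(c)\}$, and each such $x$ determines a unique intersection point since $y$ is determined by the parabola equation.

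Hence if $\Tra(\bar c)=\Tra(c)$, the two sets $S(a,b,\Tra(c))$ and $S(a,b,\Tra(\bar c))$ coincide, and the two parabolas meet $\He$ in the same number of affine points. (In fact, the translation $(x,y)\mapsto(x,y+c-\bar c)$ realizes an explicit bijection between the two intersection sets.) No step requires more than routine manipulation: the only thing to verify carefully is that the $c$-dependent part of $(ax^2+bx+c)^q+(ax^2+bx+c)$ reduces to $c^q+c=\Tra(c)$, which is immediate from additivity of the $q$-power Frobenius over $\FQ$. There is no real obstacle; the lemma is essentially a direct consequence of the fact that the Hermitian equation can be rewritten as $\No(x)=\Tra(y)$, and $\Tra$ kills the difference $c-\bar c$ whenever $\Tra(c)=\Tra(\bar c)$.
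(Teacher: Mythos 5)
Your proposal is correct and follows essentially the same route as the paper: substitute the parabola into $x^{q+1}=y^q+y$, observe that $c$ enters the resulting equation in $x$ only through $c^q+c=\Tra(c)$, and conclude that the two equations coincide when the traces agree. The extra remarks (the bijection via $x$-coordinates and the explicit translation) are fine but not needed beyond what the paper's one-line argument already gives.
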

\begin{proof}
From $\{y=ax^2+bx+\bar c\}\cap\He$ and $\{y=ax^2+bx+c\}\cap\He$, we obtain by direct substitution respectively
$x^{q+1}=a^qx^{2q}+ax^2+b^qx^q+bx+\Tra(\bar c)$ and $x^{q+1}=a^qx^{2q}+ax^2+b^qx^q+bx+\Tra(c)$.
If $\Tra(\bar c)=\Tra(c)$, the two equations are identical.
\end{proof}

We define the function $F_a:\FQ\rightarrow\Fq$ such that
\begin{equation}\label{prin}
  F_a(x)=\No(x)-\Tra(ax^2).
\end{equation} 
We can note that the following propriety holds for the function $F_a$:

\begin{lemma}\label{lemmaFa}
If $\omega\in\Fq$, then $F_a(\omega x)=\omega^2 F_a(x)$.
\end{lemma}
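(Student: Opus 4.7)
The plan is to verify the identity by direct computation, using only the definitions of $\No$ and $\Tra$ from Definition~\ref{nt} together with the fact that any $\omega \in \Fq$ satisfies $\omega^q = \omega$.

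First I would expand $F_a(\omega x)$ according to the definition \eqref{prin}:
\begin{equation*}
F_a(\omega x) = \No(\omega x) - \Tra\bigl(a(\omega x)^2\bigr) = (\omega x)^{q+1} - \Tra(a\omega^2 x^2).
\end{equation*}
For the first term, since $\omega \in \Fq$ we have $\omega^q = \omega$, hence $\omega^{q+1} = \omega^2$, so $(\omega x)^{q+1} = \omega^2 x^{q+1} = \omega^2 \No(x)$. Equivalently, this is just the multiplicativity of the norm combined with $\No(\omega) = \omega^{q+1} = \omega^2$.

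For the second term, observe that $\omega^2 \in \Fq$, and the trace $\Tra = \Tra^{\FQ}_{\Fq}$ is $\Fq$-linear. Therefore
\begin{equation*}
\Tra(a\omega^2 x^2) = \omega^2\, \Tra(a x^2).
\end{equation*}
Combining both computations gives $F_a(\omega x) = \omega^2 \No(x) - \omega^2 \Tra(ax^2) = \omega^2 F_a(x)$, as claimed.

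The statement is essentially a homogeneity property for the weighted-degree-$2$ form $F_a$ under scalar multiplication by elements of the base field $\Fq$; there is no genuine obstacle, the only fact to invoke beyond the definitions is $\omega^q = \omega$ for $\omega\in\Fq$ (which simultaneously gives $\No(\omega) = \omega^2$ and the $\Fq$-linearity of $\Tra$ on $a\omega^2 x^2$).
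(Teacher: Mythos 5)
Your proof is correct and follows essentially the same direct computation as the paper: expand $F_a(\omega x)$, use $\omega^{q+1}=\omega^2$ for the norm term, and pull $\omega^2$ out of the trace term (the paper does this by expanding $\Tra(a\omega^2x^2)=a^q\omega^{2q}x^{2q}+a\omega^2x^2$ and using $\omega^{2q}=\omega^2$, which is just your $\Fq$-linearity argument made explicit).
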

\begin{proof}
Since $\omega\in\Fq$, we have $F_a(\omega x)=\No(\omega x)-\Tra(a(\omega x)^2)=
\omega^{q+1}x^{q+1}-a^q\omega^{2q}x^{2q}-a\omega^2 x^2=
\omega^2(x^{q+1}-a^q x^{2q}-ax^2)=\omega^2 F_a(x).$
\end{proof}


We consider the automorphism group $Aut(\He)$ of the Hermitian curve. Any
automorphism $\sigma\in Aut(\He)$ has the following form, as in \cite{CGC-alg-art-xing95}:
$$\sigma\left(\begin{array}{c}x \\ y\end{array}\right)=
\left(\begin{array}{c}\epsilon x+\gamma \\ \epsilon^{q+1}y+\epsilon\gamma^qx+\delta\end{array}\right)$$
with $(\gamma,\delta)\in\He$, $\epsilon\in\FQ^*$. If we choose $\epsilon=1$ we obtain the following automorphisms 

\begin{equation}\label{autom}
  \left\{\begin{array}{l}
  x\longmapsto x+\gamma\\
  y\longmapsto y+\gamma^qx+\delta
\end{array}\right.\qquad\mbox{ with }(\gamma,\delta)\in\He,
\end{equation}
that form the subgroup $\Gamma$ with $q^3$ elements, see Section\texttt{ II} of \cite{CGC-cod-art-stichtenoth1988note}.\\
The reason why we are interested in the curve automorphisms is the following.
If we apply any $\sigma$ to any curve $\chi$ in the affine plane, then the planar intersections  between
$\sigma(\chi)$ and $\He$ will be the same as the planar intersections between $\chi$ and $\He$.
So, if we find out the number of intersections between $\chi$ and $\He$, we will
automatically have the number of intersection between $\sigma(\chi)$ and $\He$ for all $\sigma\in Aut(\He)$.
This is convenient because we can isolate special classes of parabolas
that act as representatives in the orbit $\{\sigma(\chi)\}_{\sigma\in\Gamma}$.
These special types of parabolas may be easier to handle.\\

Two special cases of interest are parabolas
of the form $y=ax^2$ and $y=ax^2+c$.\\

If we apply (\ref{autom}) to $y=ax^2$, we obtain
\begin{eqnarray}\label{par}
  y=ax^2+x(2a\gamma-\gamma^q)+a\gamma^2-\delta.
\end{eqnarray}
 
while if we apply (\ref{autom}) to $y=ax^2+c$ we obtain
\begin{eqnarray}\label{parc}
  y=ax^2+x(2a\gamma-\gamma^q)+a\gamma^2-\delta+c.
\end{eqnarray}

In the general case, if we have $y=ax^2+bx+c$ and apply the automorphism (\ref{autom}) we obtain 
\begin{equation}\label{autpartot}
  y=ax^2+(2a\gamma-\gamma^q+b)x+a\gamma^2+b\gamma-\delta+c.
\end{equation}

We need to study two distinct cases depending on the field characteristic.

Subsection \ref{odd} is devoted to the proof of the odd case of Theorem \ref{teo.principe},
while Subsection \ref{even} is devoted to the proof of the even case of Theorem \ref{teo.principe}.


\subsection{Odd characteristic}\label{odd}

In this subsection, $q$ is always odd.

First, we consider a parabola of the form $y=ax^2+ c$. Intersecting it with
the Hermitian curve, we obtain $x^{q+1}=a^q x^{2q}+ax^2+\Tra(c)$ which is equivalent to
\begin{equation}\label{eqiniziale}
  \No(x)-\Tra(ax^2)=F_a(x)=\Tra(c)
\end{equation}
Now we have to study the number of solutions of (\ref{eqiniziale}).
From this equation we get $a^q x^{2q}-x^{q+1}+ax^2=-\Tra(c)$, that is,
\begin{equation}\label{eqquad}
x^2(a^q x^{2q-2}-x^{q-1}+a)=-\Tra(c).
\end{equation}
Now we set $x^{q-1}=t$ and we need to factorize the polynomial $a^q t^2-t+a$ in $\FQ[t]$.
We obtain
$$t_{1,2}=\frac{1\pm\sqrt{1-4\No(a)}}{2a^q}=\frac{1\pm\sqrt{\Delta}}{2a^q}$$
where we set $\Delta=1-4\No(a)$, that is,
\begin{equation}\label{eqini}
a^qx^2\left(x^{q-1}-\frac{1+\sqrt{\Delta}}{2a^q}\right)
\left(x^{q-1}-\frac{1-\sqrt{\Delta}}{2a^q}\right)=-\Tra(c).    
\end{equation}
Since $\Delta\in\Fq$, there is $z\in\FQ$ such that $\Delta=z^2$, and so the polynomial is in $\FQ[x]$.\\ 
We note that $\Delta=0\iff \No(2a)=1$. So, in this special case, (\ref{eqini}) becomes 
$a^qx^2(x^{q-1}-2a)^2=-\Tra(c)$. We have proved the following lemma:
\begin{lemma}\label{deltaquad}
By intersecting a parabola $y=ax^2+c$ with $\No(2a)=1$ and the Hermitian curve,
we obtain the following equation
$$a^qx^2(x^{q-1}-2a)^2=-\Tra(c).$$
\end{lemma}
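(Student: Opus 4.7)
The plan is to read off the result directly from equation~(\ref{eqini}), which has already been derived just before the statement of the lemma, by specializing to the hypothesis $\No(2a)=1$. Recall that $\Delta = 1 - 4\No(a)$ and, by the Remark, $4\No(a) = \No(2a)$. Thus the hypothesis $\No(2a) = 1$ is equivalent to $\Delta = 0$, so a single square root $\sqrt{\Delta}=0$ can be chosen, and the two factors $x^{q-1} - \frac{1 \pm \sqrt{\Delta}}{2a^q}$ in~(\ref{eqini}) collapse to the single repeated factor $x^{q-1} - \frac{1}{2a^q}$.

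The only thing to check, then, is the cosmetic identification $\frac{1}{2a^q} = 2a$ so that the factor takes the form $x^{q-1}-2a$ as claimed. This is immediate: multiplying across, $\frac{1}{2a^q} = 2a$ is equivalent to $4a^{q+1} = 1$, i.e.\ $4\No(a)=1$, which is exactly the hypothesis $\No(2a)=1$ (again via the Remark). Substituting this identification into~(\ref{eqini}) with the collapsed factorization gives
\[
a^q x^2 \bigl(x^{q-1}-2a\bigr)^2 = -\Tra(c),
\]
which is the claimed equation.

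There is really no obstacle here: the lemma is just the degenerate-discriminant specialization of the factorization~(\ref{eqini}), combined with a one-line algebraic rewriting that uses the hypothesis a second time. The only care needed is to make sure both appearances of $\No(2a)=1$ are invoked explicitly (once to set $\Delta=0$, once to rewrite $\frac{1}{2a^q}$ as $2a$), since the equivalence $\No(2a)=1 \Longleftrightarrow \Delta = 0$ was already remarked but the second use is what puts the formula into the visually symmetric form stated in the lemma.
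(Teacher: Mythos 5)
Your proposal is correct and follows exactly the paper's own route: the paper derives the lemma by setting $\Delta=0$ in equation~(\ref{eqini}) so the two factors coincide, and your explicit check that $\frac{1}{2a^q}=2a$ is equivalent to $4\No(a)=1$ merely spells out a step the paper leaves implicit.
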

Recall that $\alpha$ is a primitive element of $\FQ$ and $\beta=\alpha^{q+1} $ is a primitive element of $\Fq$.
\begin{lemma}\label{lem:ab}
Let $x=\alpha^j\beta^i$, with $j=0,\ldots,q$ and $i=0,\ldots,q-2$. Then
\begin{itemize}
  \item If $4\No(a)\ne 1$, then all non-zero values $F_a(\alpha^j\beta^i)$ are  the elements of $\Fq^*$.
  \item If $4\No(a)=1$, then all non-zero values $F_a(\alpha^j \beta^i)$ are half of the elements of $\Fq^*$.
\end{itemize}
\end{lemma}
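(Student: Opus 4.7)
The key is to view $F_a\colon\FQ\to\Fq$ as an $\Fq$-quadratic form on the $2$-dimensional $\Fq$-space $\FQ$ and to identify the hypothesis $4\No(a)\ne 1$ as its non-degeneracy condition. Lemma~\ref{lemmaFa} supplies the scalar homogeneity $F_a(\omega x)=\omega^2 F_a(x)$ for $\omega\in\Fq$. Expanding $\No(x+y)=\No(x)+\No(y)+\Tra(xy^q)$ together with $\Tra(a(x+y)^2)=\Tra(ax^2)+\Tra(ay^2)+2\Tra(axy)$ yields the associated symmetric bilinear form
\[
B(x,y) := F_a(x+y)-F_a(x)-F_a(y) = \Tra\bigl(x(y^q-2ay)\bigr).
\]
Because the pairing $(x,z)\mapsto\Tra(xz)$ on $\FQ$ is non-degenerate, the radical of $B$ is $\{y:y^q=2ay\}$, which by Lemma~\ref{sol} is $\{0\}$ when $\No(2a)\ne 1$ and a $1$-dimensional $\Fq$-subspace when $\No(2a)=1$. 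Since $\No(2a)=4\No(a)$, this is exactly the dichotomy in the statement.

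In the case $4\No(a)\ne 1$, $F_a$ is a non-degenerate binary quadratic form over $\Fq$ (odd characteristic). A classical fact --- which I would either invoke or prove in two lines --- is that every such form is surjective onto $\Fq$: up to $\Fq$-equivalence it is either $s_1 s_2$ (trivially surjective) or an anisotropic form $s_1^2-d s_2^2$ with $d\in\Fq$ a non-square, which is the norm of $\Fq(\sqrt d)/\Fq$ and hence surjects onto $\Fq^*$. Therefore $F_a(\FQ)=\Fq$, and since $F_a(0)=0$ every $k\in\Fq^*$ is realized by some $x\in\FQ^*$, so the nonzero values of $F_a$ on $\FQ^*$ cover all of $\Fq^*$.

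In the case $4\No(a)=1$, Lemma~\ref{deltaquad} rewrites $F_a(x)=-a^q(x^q-2ax)^2$. The map $u(x)=x^q-2ax$ is $\Fq$-linear by Lemma~\ref{lemlin}, and its kernel $\{0\}\cup\{x:x^{q-1}=2a\}$ has $\Fq$-dimension $1$ by Lemma~\ref{sol}, so by Lemma~\ref{lembase} its image is a $1$-dimensional $\Fq$-line $\Fq\cdot v_0$ for some $v_0\in\FQ^*$. Writing $u(x)=\varphi(x)v_0$ with $\varphi\colon\FQ\to\Fq$ a surjective $\Fq$-linear form, we obtain
\[
F_a(x)=\lambda\,\varphi(x)^2,\qquad \lambda=-a^q v_0^2.
\]
Since $F_a(x)$ and $\varphi(x)^2$ both lie in $\Fq$ we have $\lambda\in\Fq$, and $\lambda\ne 0$ because $a,v_0\ne 0$. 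As $\varphi$ is surjective, the image of $\varphi^2$ equals $\{0\}\cup(\Fq^*)^2$, so the nonzero values of $F_a$ on $\FQ^*$ form precisely the single coset $\lambda(\Fq^*)^2$ --- exactly half of $\Fq^*$.

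The main obstacle is the classical surjectivity of non-degenerate binary quadratic forms over $\Fq$ in odd characteristic: this is standard but either an appeal to the theory of quadratic forms or a short pigeonhole-style argument must accompany it. A smaller subtlety is ruling out $\lambda=0$ in the degenerate case, which reduces directly to the standing assumption $a\ne 0$ together with the fact that $v_0$ is a nonzero spanning element of the image line.
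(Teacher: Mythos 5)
Your proof is correct, but it follows a genuinely different route from the paper's. The paper works multiplicatively: it uses the homogeneity $F_a(\omega x)=\omega^2F_a(x)$ (Lemma~\ref{lemmaFa}) to observe that each ``direction'' $\{\alpha^j\beta^i\}_{0\le i\le q-2}$ with $F_a(\alpha^j)\ne 0$ contributes exactly one coset of the squares $(\Fq^*)^2$, i.e.\ half of $\Fq^*$; it then asserts that varying $j$ over the $q+1$ directions hits both cosets when $4\No(a)\ne 1$, and uses the explicit factorization $F_a(x)=-a^qx^2(x^{q-1}-2a)^2$ from Lemma~\ref{deltaquad} to see that only the coset $-a^q(\FQ^*)^2\cap\Fq$ arises when $4\No(a)=1$. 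You instead treat $F_a$ as an $\Fq$-quadratic form on the two-dimensional space $\FQ$, compute its polar form $B(x,y)=\Tra\bigl(x(y^q-2ay)\bigr)$, identify $4\No(a)=1$ as exactly the degeneracy condition via Lemma~\ref{sol}, and then invoke the classification of binary quadratic forms over $\Fq$ (universal in the non-degenerate case, $\lambda\varphi(x)^2$ in the rank-one case). Both arguments are sound, and yours is arguably the more rigorous of the two: the paper's justification of the key step ``by varying $j$ we can obtain every element of $\Fq^*$'' is little more than an assertion (one can repair it, e.g., by noting that if only one square-class were attained then some $k\in\Fq^*$ would have $2(q+1)>2q=\deg F_a$ preimages), whereas your non-degeneracy dichotomy delivers it cleanly. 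What the paper's approach buys is elementariness and continuity with the surrounding computation (it reuses the root count of $F_a(x)=0$ from the adjacent case analysis); what yours buys is a conceptual explanation of \emph{why} the condition $4\No(a)=1$ is the dividing line. The two ingredients you flag as needing justification --- universality of non-degenerate binary forms over $\Fq$ and $\lambda\ne0$ --- are both standard and correctly disposed of.
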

\begin{proof}
We fix an index $j$ such that $F_a(\alpha^j)\ne 0$.
The set of the values $\{F_a(\alpha^j \beta^i)\}_{0\leq i\leq q-2}=
\{\beta^{2i}F_a(\alpha^j)\}_{0\leq i \leq q-2}$ contains half of the elements
of $\Fq^*$, since $q$ is odd (and $\frac{q-1}{2}$ is an integer) and so
$\beta^{2(\frac{q-1}{2})}=\alpha^{q^2-1}=1$. 

If $4\No(a)\ne 1$, by varying $j$, we can obtain every element of $\Fq^*$.

In fact, $F_a(x)=-x^2(a^q x^{2q-2}+a- x^{q-1})$, so  
$$
F_a(\alpha^j)=\No(\alpha^j)-\Tra(a(\alpha^j)^2)=\beta^j-
a\alpha^{2j}-a^q\alpha^{2jq}=\beta^j-\beta^{r_j},
$$
where $0\leq r_j\leq q^2-2$ and $\beta^{2i}F_a(\alpha^j)=\beta^{2i+j}-\beta^{2i+r_j}$, that are all elements of $\Fq^*$.

By Lemma \ref{deltaquad}, if $4\No(a)=1$ then $F_a(x)$ becomes $-a^qx^2(x^{q-1}-2a)^2$,
so $\beta^{2i}F_a(\alpha^j)=-a^q\beta^{2i}(\alpha^{jq}-2a\alpha^j)^2$,
that are half of the elements of $\Fq^*$.
\end{proof}

Now we study the number of solutions of equation (\ref{eqiniziale}).\\
We consider two cases: when $\Tra(c)=0$ and when $\Tra(c)\ne 0$.
\begin{itemize}
  \item[$*$] Case $\Tra(c)=0$. By Lemma \ref{lemtraccia}, it is enough to study
the case $c=0$, which is the intersection between $\He$ and $y=ax^2$.
By (\ref{eqini}) we have
$$a^qx^2\left(x^{q-1}-\frac{1+\sqrt{\Delta}}{2a^q}\right)
\left(x^{q-1}-\frac{1-\sqrt{\Delta}}{2a^q}\right)=0.$$
We must differentiate our argument depending on $\Delta$. Recall that $\Delta\in\Fq$.
\begin{itemize}
    \item[-] $\mathbf{\Delta}=0$.
By Lemma \ref{deltaquad}, (\ref{eqini}) becomes
$$
a^qx^2(x^{q-1}-2a)^2=0.
$$
So we have always one solution $x=0$ and the solutions of {\small{$\{x^{q-1}=2a\}$}}, which are $q-1$,
since $\No(2a)=1$ and so we can apply Lemma \ref{sol}.
Therefore, in this case, we have $q$ points of intersections between the parabola and the Hermitian curve $\He$. 

\noindent Since $\No(2a)=1$, we have $(q+1)$ distinct $a$'s.
    \item[-]$\mathbf{\Delta}=1$. That is, $\No(2a)=0\iff a=0$, but this is impossible.
    \item[-]$\mathbf{\Delta}\in\Fq\backslash\{0,1\}$.
We note that any element in $\Fq$ can always be written as $z^2$ with $z^2\in\FQ$.
In order to study the solutions of (\ref{eqini}), we can study the solutions of the following equations
\begin{equation}\label{stella}
    x^{q-1}=\frac{1\pm z}{2a^q}.
\end{equation}
By Lemma \ref{sol} we know that $x^{q-1}=\frac{1+z}{2a^q}$ have solutions
if and only if $\No\left(\frac{1+z}{2a^q}\right)=1$. Note that
{\small{$$\No\left(\frac{1+z}{2a^q}\right)=1\iff\frac{1-z^2}{(1-z)^{q+1}}=1\iff 1+z=(1-z)^q\iff -z=z^q$$}}

\noindent We obtain the same result for $x^{q-1}=\frac{1-z}{2a^q}$.\\
If $z$ is a solution of (\ref{stella}) and $z\in\Fq$, then $z$ satisfy simultaneously $z^q=z$ and $z^q=-z$. Since $q$ is odd, this is possible only when $z=0$, which implies $\Delta=0$, that is not admissible.\\

\noindent  Returning to count the intersection points, thanks to the previous discussion of the solution of (\ref{stella}), we have to consider two distinct cases:
\begin{enumerate}
    \item $z=z^q$, that is,  $z\in\Fq$. It is easy to prove that there
    are $\frac{q-1}{2}-1=\frac{q-3}{2}$ possible values of $z^2$, since $z\ne 0,1$.
    So we have $(q+1)\frac{q-3}{2}$ values of $a$ such that we have
    only $one$ point of intersection (with $x=0$).
    \item $z=-z^q$. The equation $-z=z^q$ has only one solution in $\Fq$, so the other $q-1$ solutions are in $\FQ\backslash\Fq$.
    For such $z$, we have $2(q-1)+1=2q-1$ points of intersection.
    That is, $q-1$ solutions from equation $x^{q-1}=\frac{1-z}{2a^q}$, $q-1$ solutions from equation $x^{q-1}=\frac{1+z}{2a^q}$ and one point from $x=0$.\\
It is simple to verify that the $z^2$ such that $z\in\FQ\backslash\Fq$ are $\frac{q-1}{2}$.
So we have $(q+1)\frac{q-1}{2}$ values of $a$ such that we have exactly
$2q-1$ points of intersection.

\end{enumerate}
\end{itemize}

Now we apply the automorphism and we want to calculate how many different parabolas
we can obtain. Applying (\ref{autom}) to $y=ax^2$ we obtain (\ref{par}).
For the moment, we restrict our counting argument to the case $\Delta\ne 0$.
We can note that if $\Delta\ne 0$, we have a maximal orbit, that is,
all possible parabolas are distinct (they are $q^3$ because $\Gamma$ has $q^3$ elements).
In other words, we claim that it is impossible that we obtain
two equal parabolas with $(\gamma,\delta)\ne(\bar\gamma,\bar\delta)$.
To prove that, we have to solve the following system:
{\footnotesize{$$
\left\{\begin{array}{l}
2a\bar\gamma-\bar\gamma^q=2a\gamma-\gamma^q\\
a\bar\gamma^2-\bar\delta=a\gamma^2-\delta\\
\gamma^{q+1}=\delta^q+\delta\\
\bar\gamma^{q+1}=\bar\delta^q+\bar\delta\\
1-4a^{q+1}\ne 0.
\end{array}\right.
$$}}

\noindent However, $2a\bar\gamma-\bar\gamma^q=2a\gamma-\gamma^q\iff
2a(\bar\gamma-\gamma)=\bar\gamma^q-\gamma^q=(\bar\gamma-\gamma)^q\implies
4a^{q+1}(\bar\gamma-\gamma)^q(\bar\gamma-\gamma)=(\bar\gamma-\gamma)(\bar\gamma-\gamma)^q\iff
4a^{q+1}=1$. And it is impossible, because $\Delta\ne 0$.\\
So when $\Delta\ne 0$ we have exactly $q^3$ distinct parabolas that have
the same planar intersections  with $\He$ as $y=ax^2$ has.

  \item[$*$] Case $y=ax^2+c$, with $\Tra(c)\ne 0$.
As in previous case, we have to differentiate depending on $\Delta$.
\begin{itemize}
    \item[-] If $\Delta=z^2$ and $z\in\Fq$, we know that $F_a(x)$ vanishes only if $x=0$.
If $x\ne 0$, then by Lemma \ref{lem:ab}, $F_a(\beta^i\alpha^j)=\beta^{2i}F_a(\alpha^j)$
can assume every value of $\Fq^*$. But $j$ can assume $q+1$ distinct values,
so every value of $\Fq^*$ is obtained $q+1$ times. So the equation $F_a(x)=\Tra(c)$
has exactly $q+1$ solutions.
    \item[-] If $\Delta=z^2$ and $z\in\FQ\backslash\Fq$, we know that $F_a(x)=0$
has $2q-1$ solutions. So there are exactly two distinct values of $j$ such that
$F_a(\alpha^j)=0$, and so every value in $\Fq^*$ is obtained $q-1$ times. So
the equation $F_a(x)=\Tra(c)$ has exactly $q-1$ solutions.
    \item[-] If $\Delta=0$ we have $4a^{q+1}=1$.\\ 
So (\ref{prin}) can be written as $ a^qx^2(x^{q-1}-2a)^2=-\Tra(c)$, that is,
\hspace{-1.5cm}\begin{equation}\label{dzero}
  x^2(x^{q-1}-2a)^2=-4a\Tra(c)=-4a\beta^r
\end{equation}
for some fixed $r$ with $1\leq r\leq q-1$.\\ 
Note that (\ref{dzero}) can be written as $f(x)^2=-4a\Tra(c)$,
where $f$ is as in Lemma \ref{lemlin}, that is, $f(x)=x^{q}-2ax$.\\
We note that $-4a\beta^r$ is always a square in $\FQ$.
In fact $-4\beta^r$ is a square because lies in $\Fq$,
and also $a$ is a square by Lemma~\ref{aquad}. Let us write $-4a\beta^r=\alpha^{2h}$, so (\ref{dzero}) becomes
$x(x^{q-1}-2a)=\pm\,\alpha^{\,h}$. Clearly, $0\leq h\leq \frac{q^2-1}{2}$.

\noindent We consider the ``positive'' case:
\begin{equation}\label{cond1}
f(x)=x^{q}-2ax=\alpha^{\,h}.
\end{equation} 
\noindent It is simple to prove that if $x$ is a solution of equation
$x^q-2ax=\alpha^h$, then $-x$ is a solution of the equation $x^q-2ax=-\alpha^h$.
So by Corollary~\ref{corFON} the equation $F_a(x)=\Tra(c)$ has $0$ solutions
if $\alpha^h$ is not in $\I(f)$ or $2q$ solution if $\alpha^h$ is in $\I(f)$.

\end{itemize}
\end{itemize}

We consider a parabola $y=ax^2+bx+c$ and apply the automorphism (\ref{autom})
and we obtain (\ref{autpartot}).\\
Note that, for any $k\in\FQ$,
\begin{equation}\label{bdiv}
  2a\gamma-\gamma^q+b=k\implies 2ab^q+b=2ak^q+k.
\end{equation}
In fact $b^q=(k-2a\gamma+\gamma^q)^q=k^q-\frac{1}{2a}\gamma^q+\gamma=k^q+
\frac{1}{2a}(-\gamma^q+2a\gamma)=k^q+\frac{1}{2a}(k-b)$.\\

A consequence is that $2a\gamma-\gamma^q+b=0\implies 2ab^q+b=0$.\\

We consider two distinct cases $2a\gamma-\gamma^q+b=0$ and $2a\gamma-\gamma^q+b\ne 0$.\\


\noindent\begin{tabular}{|c|}
\hline
$2a\gamma-\gamma^q+b\ne 0$.\\
\hline
\end{tabular}

\begin{thm}
Let $y=ax^2+bx+c$ be a parabola with $2ab^q+b\ne 0$ and $\No(2a)=1$.
Then there exists a $\gamma$ such that for any $\delta$ if we apply the automorphism
(\ref{autom}) we obtain $y=ax^2+(2a\gamma-\gamma^q+b)x+a\gamma^2+ b\gamma-\delta+c$,
with $2a\gamma-\gamma^q+b\ne 0$. We can write any such parabola as $y=(ux+uv)^2$ where $a=u^2$ and $v^q+2av\ne 0$.
\end{thm}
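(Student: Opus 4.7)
The plan is to construct $(\gamma, \delta) \in \He$ explicitly so that the automorphism (\ref{autom}) carries the given parabola onto a perfect square. The first claim (existence of $\gamma$ with $B := 2a\gamma - \gamma^q + b \neq 0$) is immediate: by Lemma~\ref{lemlin} and Corollary~\ref{corFON}, the equation $2a\gamma - \gamma^q = -b$ has at most $q$ solutions in $\FQ$, so its complement has size at least $q^2 - q > 0$.

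For the main claim, Lemma~\ref{aquad} yields $u \in \FQ$ with $a = u^2$. A parabola $y = (ux+uv)^2 = ax^2 + 2avx + av^2$ is exactly one with vanishing discriminant $B^2 - 4aC$. The key idea is to search for $\gamma$ in the kernel
\[
K := \{\gamma \in \FQ : \gamma^q = 2a\gamma\},
\]
which is non-trivial since $\No(2a)=1$ makes $\gamma^{q-1}=2a$ solvable by Lemma~\ref{sol}. For $\gamma \in K$ the new linear coefficient simplifies to $B = b$, which is nonzero because $2ab^q+b\neq 0$ forces $b\neq 0$, and the vanishing-discriminant condition determines
\[
\delta \;=\; a\gamma^2 + b\gamma + c - \frac{b^2}{4a}.
\]

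Using $\gamma\in K$ together with the identities $4a^{q+1}=1$, $4a^{q+2}=a$, and $1/(4a)=a^q$ (all consequences of $\No(2a)=1$), a direct computation yields $\Tra(a\gamma^2) = 2a\gamma^2 = \gamma^{q+1}$, and the Hermitian condition $\Tra(\delta)=\gamma^{q+1}$ collapses to the single linear equation
\[
(b + 2ab^q)\,\gamma \;=\; \Tra(a^q b^2) - \Tra(c).
\]
Since $b+2ab^q \neq 0$ by hypothesis, this uniquely determines $\gamma \in \FQ$, and a short verification (again invoking $4a^{q+1}=1$) confirms that this $\gamma$ does lie in $K$. With the resulting $(\gamma,\delta)$, the transformed parabola computed via (\ref{autpartot}) is $y = ax^2 + bx + b^2/(4a) = (ux+uv)^2$ with $v = b/(2a)$, and the invariance identity (\ref{bdiv}) applied to $k = B = b$ gives $v^q+2av = 2ab^q+b \neq 0$ for free.

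The main potential obstacle is solvability of the constraint on $\gamma$: generically, combining discriminant vanishing with $(\gamma,\delta)\in\He$ gives a quadratic equation in $\gamma$ through $F_a$. Restricting to $K$ collapses the quadratic part $F_a(\gamma) = -a^q(2a\gamma - \gamma^q)^2$ (Lemma~\ref{deltaquad}) to zero, leaving only a linear residual equation whose solvability is guaranteed precisely by the non-vanishing of the $\Gamma$-invariant $2ab^q+b$.
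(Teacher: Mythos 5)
Your proof is correct, and it takes a genuinely different route from the paper's. The paper argues \emph{forward}: it fixes the target normal form $y=a(x+v)^2$, applies all of $\Gamma$ to it via (\ref{autpartot}), and claims that the $q$ admissible $\gamma$'s times the $q$ admissible $\delta$'s produce $q^2$ distinct constant terms $c=\delta+av^2-a\gamma^2-b\gamma$, hence every parabola $y=ax^2+bx+c$ with the given $a,b$ lies in the orbit; the distinctness of those $q^2$ values of $c$ is asserted rather than checked (it can be justified by an orbit--stabilizer count, but the paper does not do so). You argue \emph{backward} and constructively: restricting $\gamma$ to the kernel $K=\{\gamma:\gamma^q=2a\gamma\}$ (nonempty by Lemma \ref{sol} since $\No(2a)=1$) freezes the linear coefficient at $b$ and, crucially, kills the quadratic part $F_a(\gamma)$ of the Hermitian membership constraint, so that $\Tra(\delta)=\gamma^{q+1}$ collapses to the linear equation $(b+2ab^q)\gamma=\Tra(a^qb^2)-\Tra(c)$; I checked that its unique solution does lie in $K$ (the needed identity $2a(b^q+2a^qb)=b+2ab^q$ is exactly $4a^{q+1}=1$), and that $v=b/(2a)$ satisfies $v^q+2av=2ab^q+b\ne 0$ via (\ref{bdiv}). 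Both arguments rest on the same two ingredients --- the $\Gamma$-invariance of $2ab^q+b$ from (\ref{bdiv}) and the relation $4a^{q+1}=1$ --- but yours buys an explicit closed-form automorphism and closes the injectivity gap, at the cost of the extra verification that $\gamma_0\in K$; the paper's version is shorter and feeds more directly into the subsequent orbit count used for Theorem \ref{teo.intTr0}. One cosmetic remark: your opening counting argument for the existence of $\gamma$ with $2a\gamma-\gamma^q+b\ne 0$ is correct but weaker than what (\ref{bdiv}) gives for free (namely that \emph{every} $\gamma$ works when $2ab^q+b\ne0$), and it is superseded anyway by your choice $\gamma\in K$, for which the coefficient equals $b\ne 0$.
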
  
\begin{proof}
Thanks to (\ref{bdiv}) with $k\ne 0$ we have that, since $2ab^q+b\ne 0$,
then $\exists\,\gamma$ such that $2a\gamma-\gamma^q+b\ne 0$.\\
Let $k\in\FQ$ such that $2a\gamma-\gamma^q+ b=k\ne 0$. Using Corollary
\ref{corFON}, if there exists at least one solution of $2a\gamma-\gamma^q=k-b$,
then there exists $q$ solutions. So we have at least $q$ different $\gamma$'s
that verify the previous equation.\\

To prove that any parabola as in (\ref{autpartot}) can be written as $y=(ux+uv)^2$
with $a=u^2$ and $v^q+2av\ne 0$, it is sufficient to prove that the solutions of the following system contain all $c$'s.
$$
\left\{\begin{array}{l}
2a\gamma-\gamma^q+b=2av\ne 0\\
a\gamma^2+b\gamma-\delta+c=av^2\ne 0\\
\gamma^{q+1}=\delta^q+\delta\\
1-4a^{q+1}=0
\end{array}\right.
$$
Using (\ref{bdiv}) the first equation of system $2a\gamma-\gamma^q+b=2av$
implies that $v^q+2av\ne 0$. In fact if we consider (\ref{bdiv}) with $k=2av$,
we have $0\ne 2ab^q+b=2ak^q+k=2a(2av)^q+2av=v^q+2av$.\\
By the second equation we have $c=\delta+av^2-a\gamma^2-b\gamma$.
So for any $\gamma$ (and there are $q$ possible $\gamma$'s), there are $q$
distinct $\delta$'s (by the curve equation). So we have $q^2$ different $c$'s,
that is, all possible $c$'s.\\
Finally, we can write (\ref{autpartot}) as $y=a(x+v)^2$.
By Lemma \ref{aquad}, $a=u^2$ is a square so $y=(ux+uv)^2$. 
\end{proof}

\begin{thm}
Let $a,v\in\FQ$ such that $\No(2a)=1$ and $v^q+2av\ne 0$. Then the Hermitian curve $\He$
intersects a parabola $y=a(x+v)^2$ in $q$ points.
\end{thm}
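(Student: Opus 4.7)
The plan is to substitute $y = a(x+v)^2$ into the Hermitian equation $x^{q+1}=y^q+y$ and count solutions $x\in\FQ$. As in the earlier analysis of $y=ax^2+c$, the hypothesis $\No(2a)=1$ (equivalently $4a^{q+1}=1$) lets me write the ``diagonal'' terms as a perfect square: $a^q x^{2q}-x^{q+1}+ax^2 = \tfrac{1}{4a}f(x)^2$, where $f(x)=x^q-2ax$ is $\Fq$-linear by Lemma~\ref{lemlin}. Expanding $a^q(x+v)^{2q}+a(x+v)^2$ and using $x^q = f(x)+2ax$ to regroup the mixed terms $2v^q x^q + 8a^2 vx$, the intersection equation should complete a square as
$$\bigl(f(x)+v^q\bigr)^2 = -4a\bigl(wx+av^2\bigr), \qquad w:=v^q+2av.$$
The translation $t := x + av^2/w$ (legitimate since $w\neq 0$) eliminates the constant on the right.

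The key identity is $w^q = w/(2a)$, obtained by applying Frobenius: $w^q = v^{q^2}+2^q a^q v^q = v + v^q/(2a) = w/(2a)$, using $v^{q^2}=v$, $2^q=2$ and $a^q=1/(4a)$. A short calculation then shows $f(x)+v^q = f(t)+w/2$, so the equation becomes
$$\bigl(f(t)+w/2\bigr)^2 = -4awt.$$
Writing $\rho := f(t)+w/2$ gives $t = -\rho^2/(4aw)$; plugging this back into $f(t) = \rho - w/2$ and simplifying (again via $w^q = w/(2a)$) should yield $(\rho-w)^2 = (2a\rho^q)^2$, which splits into the two $\Fq$-linear equations
$$F(\rho) := \rho - 2a\rho^q = w \quad\text{or}\quad G(\rho) := \rho + 2a\rho^q = w.$$

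Both $F,G$ are $\Fq$-linear endomorphisms of $\FQ$ with $1$-dimensional kernels: $\rho^{q-1}=\pm 2a^q$ has exactly $q-1$ nonzero solutions by Lemma~\ref{sol}, since $\No(\pm 2a^q) = 4(a^{q+1})^q = 1$. Hence each of the two equations has either $0$ or $q$ solutions. The hypothesis supplies $w$ via the $\Fq$-linear map $\psi(v)=v^q+2av$, and $G(1) = 1+2a = \psi(1)$, so $\I G = \I\psi = \Fq(1+2a)$ whenever $1+2a\ne 0$, putting $w\in \I G$ and giving $|S_G|=q$. On the other hand, $\I F = \Fq(1-2a)$ when $1-2a\ne 0$, and $w\in \Fq(1-2a)$ would force $(1+2a)/(1-2a)\in\Fq$; a direct check using $4a^{q+1}=1$ shows this happens iff $a\in\Fq$, i.e.\ iff $a=\pm 1/2$. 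The degenerate cases $a=\pm 1/2$ need separate handling: there $F$ becomes $\pm(\rho-\rho^q)$ with image $\ker\Tra$, while $\psi$ is trace-like with image $\Fq$, and these two $\Fq$-lines meet only at $0$. In every case, $w\ne 0$ lies in $\I G$ but not in $\I F$, so $|S_F|+|S_G|=q$. Finally, $\rho\mapsto t=-\rho^2/(4aw)$ is injective on $S_G$ for $w\ne 0$ (since $\rho,-\rho\in S_G$ would give $G(-\rho)=-w=w$, impossible in odd characteristic), so the total number of intersection points is $q$. The main delicate step is the image analysis, especially across the edge cases $a=\pm 1/2$ where the clean formula for $\I F$ breaks down.
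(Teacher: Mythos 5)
Your proposal is correct, and it reaches the count of $q$ by a genuinely different route than the paper. The paper writes $a=u^2$ (via Lemma~\ref{aquad}), substitutes $z=ux+uv$, and lands on $\pm\frac{1}{2}(z^q\mp z)^2=\No(uv)-\Tra(z(uv)^q)$; it then argues that $(z^q-z)^2$ ranges over the non-squares of $\Fq$ and $(z^q+z)^2$ over the squares, exhibits the two explicit solutions $z$ for each of the $\frac{q-1}{2}$ admissible values of $t$ plus one solution at $t=0$, and sums to $q$. You instead complete the square with the $\Fq$-linear map $f(x)=x^q-2ax$ (never needing $a$ to be a square), translate away the constant using the identity $w^q=w/(2a)$, and reduce the whole problem to the pair of $\Fq$-linear equations $F(\rho)=w$, $G(\rho)=w$, which you settle by a kernel/image dimension count. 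I checked the key identities ($f(av^2/w)=\tfrac{1}{2}(v^q-2av)$, the reduction to $(\rho-w)^2=(2a\rho^q)^2$, and the bijection between solutions $t$ and $S_F\cup S_G$) and they hold; your argument is cleaner and more structural, at the price of the somewhat delicate image analysis, whereas the paper's is more explicit but requires the square/non-square bookkeeping in $\Fq$ and the auxiliary fact that $a$ is a square. One small imprecision: for $a=-1/2$ the roles in your parenthetical are swapped — there $F(\rho)=\rho+\rho^q$ is the trace with image $\Fq$ while $\psi(v)=v^q-v$ has image $\ker\Tra$ — but the conclusion you actually use (the two images meet only in $0$, and $w=\psi(v)\in\I G$) survives unchanged, so this does not affect the proof.
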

\begin{proof}
We have to solve the system
$$
\left\{\begin{array}{l}
y=(ux+uv)^2\\
x^{q+1}=y^q+y
\end{array}\right.\implies x^{q+1}=(ux+uv)^{2q}+(u+uv)^2
$$
By a change of variables $z=ux+uv$, we obtain $(\frac{z-uv}{u})^{q+1}=z^{2q}+z^2$, so we have
$$
-(uv)z^q-(uv)^{q}z+(uv)^{q+1}=u^{q+1}z^{2q}+u^{q+1}z^2-z^{q+1}=
u^{q+1}(z^q-2u^{q+1}z)^2.
$$
Since $\No(2a)=1$ and $a=u^2$, we have $u^{q+1}=\pm \frac{1}{2}$ we have
\begin{align}
  \frac{1}{2}(z^q-z)^2=\No(uv)-\Tra(z(uv)^q)\label{eqz}\\
  -\frac{1}{2}(z^q+z)^2=\No(uv)-\Tra(z(uv)^q)\label{eqzm}
\end{align}

We consider two cases:\\
\begin{itemize}
  \item[$*$] If $u^{q+1}=\frac{1}{2}$, we can note that $(z^q-z)^2$ is not a square in $\Fq$ if $z^q-z\ne 0$.\\ 
In fact, if we suppose by contradiction that $(z^q-z)^2=\beta^{2r}$,
then $z^q-z=\beta^{r}\in \Fq$ but also $z^q+z\in\Fq$, so $-2z\in\Fq\iff z\in\Fq$
and so $z^q-z=0$, which is a contradiction.
  \item[$*$] If $u^{q+1}=-\frac{1}{2}$, we can note that $(z^q+z)^2$ is a square in $\Fq$, because $z^q+z\in\Fq$.
\end{itemize}
Let $t=\No(uv)-\Tra(z(uv)^q)$. So $t\in \Fq$.
Thanks to (\ref{eqz}) we have $2t=(z^q-z)^2$, while (\ref{eqzm}) becomes $-2t=(z^q+z)^2$.\\

So we have $\frac{q-1}{2}$ values of $t$ (that are all the non-squares)
and $t=0$ when $u^{q+1}=\frac{1}{2}$, and $\frac{q-1}{2}$ values of $t$
(that are all the squares) and $t=0$ when $u^{q+1}=-\frac{1}{2}$.\\
\indent Now we consider separately the case $t=0$ and $t\ne 0$.

\begin{itemize}
  \item[-] We claim that if $t=0\mbox{ and }u^{q+1}=\pm \frac{1}{2}\implies z\in\Fq$.
  We show only the case $u^{q+1}=\frac{1}{2}$. With these assumptions (\ref{eqz}) becomes
$$
-(uv)z^q-(uv)^{q}z+(uv)^{q+1}=0\iff z=\frac{(uv)^{q+1}}{(uv)^{q}+uv}.
$$
We can note that since $v^q+2av\ne 0$, then $(uv)^{q}+uv\ne 0$.
In fact suppose that $v^q+2av=0$, so $(uv)^{q}+uv=-\frac{1}{2u}2av+uv=0$.\\
We have to verify that $z^q=z$. Indeed $z^q=\frac{(uv)^{q+1}}{uv+(uv)^{q}}=z$.\\
Similar calculations (here omitted) show the case $u=-\frac{1}{2}$.
  \item[-] We claim that if $t\ne 0\mbox{ and } u^{q+1}=\pm \frac{1}{2}\implies z\not\in\Fq$.
  With these assumptions, we show only the case $u^{q+1}=\frac{1}{2}$. We have $(z^q-z)^2=2t=\alpha^{2r}$,
  that is, $z^q=z \pm \alpha^r$. Now we substitute $z^q$ in
$-(uv)z^q-(uv)^{q}z+(uv)^{q+1}=t $ and we obtain
$-(uv)(\pm\alpha^r+z)-(uv)^{q}z+(uv)^{q+1}=\frac{1}{2}\alpha^{2r}$, that is,
\begin{equation}\label{solz}
z=\frac{(uv)^{q+1}-\frac{1}{2}\alpha^{2r}\mp uv\alpha^r}{\Tra(uv)}
\end{equation}
We can note that $\alpha^{qr}=-\alpha^r$, in fact $2t=\alpha^{2r}\in\Fq$,
so $(\alpha^{2r})^q=\alpha^{2r}$, that is, $\alpha^{rq}=\pm\alpha^{r}$
but $\alpha^r\not\in\Fq$ (since $2t$ is not a square in $\Fq$) so $\alpha^{qr}=-\alpha^r$. We have thus proved
$$
z=\frac{(uv)^{q+1}-\frac{1}{2}\alpha^{2r}\mp uv\alpha^r}{\Tra(uv)}\mbox{ and }
z^q=\frac{(uv)^{q+1}-\frac{1}{2}\alpha^{2r}\pm (uv)^q \alpha^r}{\Tra(uv)}
$$

\noindent Now we have to verify that the two $z$'s as in (\ref{solz}) are solutions of (\ref{eqz}).
We have $z^q-z=\pm\alpha^r$ and $\No(uv)-\Tra(z(uv)^q)=t$. So
$$\pm\alpha^r=z^q-z\iff\pm \Tra(uv)\alpha^r=\pm(uv)^{q}\alpha^{r}\pm uv\alpha^r$$
and
{\small{
$$
\begin{array}{cl}
 & (uv)^{q+1}-z(uv)^q-z^q(uv)=t\\
\iff & (uv)^{q+1}\Tra(uv)-(uv)^q((uv)^{q+1}-\frac{1}{2}\alpha^{2r}\pm uv\alpha^r)+\\
&-uv((uv)^{q+1}-\frac{1}{2}\alpha^{2r}\pm uv \alpha^r)= \Tra(uv) t\\
\iff & (uv)^{q+1}\Tra(uv)+t \Tra(uv)-(uv)^{2q+1}-(uv)^{q+2}= \Tra(uv) t\\
\iff & (uv)^{q+1}\Tra(uv)-(uv)^{2q+1}-(uv)^{q+2}= 0.\\
\end{array}
$$}}
So the $z$'s are solutions of (\ref{eqz}).\\
Similar calculations (omitted here)  show the case $u^{q+1}=-\frac{1}{2}$.
\end{itemize}
Therefore, we have two solutions for any $t$ not a square in $\Fq^*$
and we have only one solution when $t=0$.
That is, we get a total of $\frac{q-1}{2}2+1=q$ intersections.

The same holds for the case with $u^{q+1}=-\frac{1}{2}$.
\end{proof}

\break

Now we consider the second case.\\

\noindent\begin{tabular}{|c|}
\hline
$2a\gamma-\gamma^q+b=0$.\\
\hline
\end{tabular}
$\,$\\

We note that if $2a\gamma-\gamma^q+b=0$ then $2ab^q+b=0$, and so (\ref{autpartot})
is actually $y=ax^2+c$. Now we apply the automorphism (\ref{autom})
to the parabola $y=ax^2+c$ and we obtain (\ref{parc}). Now if

\begin{itemize}
  \item[-] $\Delta\ne 0$.\\
The parabolas in (\ref{par}) are all distinct.

The values of $c$ such that $\Tra(c)\ne 0$ are exactly $q^2-q$,
but we must be careful and not count twice the same parabola.
In particular, if two parabolas share $a$ and $b$, then they are in the
same orbit if $\Tra(c)=\Tra(c')$.
So we must consider only one of these for any non-zero trace $\Tra(c)$. These values are $q-1$. 
Summarizing:
\begin{itemize}
    \item[$*$] If $\Delta=z^2$ and $z\in\Fq$ (and $\Tra(c)\ne 0$), then the number of parabolas with $q+1$ intersection is 
$$
\underbrace{(q+1)\frac{q-3}{2}}_{a}\underbrace{q^3(q-1)}_{b,c}=\frac{1}{2}
q^3(q^2-1)(q-3).
$$
    \item[$*$] If $\Delta=z^2$ and $z^q+z=0$ (and $\Tra(c)\ne 0$), then the number of parabolas with $q-1$ intersection is 
$$
\underbrace{(q+1)\frac{q-1}{2}}_{a}\underbrace{q^3(q-1)}_{b,c}=
\frac{1}{2}q^3(q+1)(q-1)^2.
$$
\end{itemize}
  \item[-] $\Delta=0$, that is, $4a^{q+1}=1$.

\noindent We want to understand how many different parabolas of the type\\ $y=ax^2+bx+\bar c$
(with $a$ fixed) we can obtain. So we have to study the number of pairs $(b,\bar c)$.

We note that  
\begin{equation}\label{tracce}
  \Tra(\bar c)=a^qb^2+\Tra(c).
\end{equation}
In fact
$$
\begin{array}{lll}
\Tra(\bar c) & = & (a\gamma^2-\delta)^q+a\gamma^2-\delta+\Tra(c)\\
& = & (a\gamma^2)^q+a\gamma^2-\gamma^{q+1}+\Tra(c)=a^q\gamma^2(\gamma^{q-1}-2a)^2+\Tra(c).
\end{array} 
$$
Let $\Tra(c)=k$, with $k\in\Fq^*$. Let us consider two distinct cases:
\begin{itemize}
    \item[-] $\Tra(c)=\Tra(\bar c)$. By (\ref{tracce}) we have that $\Tra(c)=\Tra(\bar c) \iff b=0$.\\ 
So the pairs $(0,\bar c)$ are exactly $q^2-q$, because they correspond to all $\bar c\in\FQ$ such that $\Tra(\bar c)\ne 0$.
    \item[-] $\Tra(c)\ne \Tra(\bar c)$. Then $\Tra(\bar c)=a^qb^2+k$.\\
Since $b=2a\gamma-\gamma^q$, then, by considering all possible $\gamma$'s, we obtain $q-1$ distinct $b$'s.\\
In fact, we can consider the function $f:\FQ\rightarrow \FQ$ such that
$f(\gamma)=2a\gamma-\gamma^q$. By Corollary \ref{corFON}, for any $t\in \I(f)$,
the equation $f(\gamma)=t$ has $q$ distinct solutions.\\
Since we are interested in the case $b\ne 0$, we have $\frac{(q^2-q)}{q}=q-1$ different $b$'s.
\noindent We can note that if $b$ is a solution of the equation $2a\gamma-\gamma^q=0$, also $-b$ is.\\
\noindent Since we are interested in the pairs $(b^2,\bar c)$,
we note that we have to consider the equation $\Tra(\bar c)=a^qb^2+k$,
so the pairs $(b^2,\bar c)$ are exactly $\frac{q-1}{2}(q^2-q)$.
In fact there are exactly $\frac{q-1}{2}$ distinct $b^2$ and for any pairs $(b^2, k)$
we have exactly $q$ distinct $\bar c\,$'s. While the possible $k$'s are exactly $q-1$
(because $\Tra(c)\ne 0$).\\
All possible pairs $(b,\bar c)$ are $2\frac{q-1}{2}(q^2-q)=(q-1)(q^2-q)$.
\end{itemize}

\noindent We fix $a$ and we obtain exactly
$(q-1)(q^2-q)+q^2-q=q^2(q-1)$ parabolas of the type $y=ax^2+bx+\bar c$.

In conclusion if $\Delta=0$ and $\Tra(c)\ne 0$, then we have
$q^2(q+1)\frac{q-1}{2}$ parabolas with $2q$ or $0$ intersections.
\end{itemize}
The last type of parabolas cannot be easily counted and so we obtain their number by difference.
\begin{claim} The parabolas that have $q$ intersections with the Hermitian curve $\He$ are $q^2(q+1)(q^2-q+1)$.
\end{claim}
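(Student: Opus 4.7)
The plan is to count by complement. Since $a\in\FQ^{*}$ and $b,c\in\FQ$, the total number of parabolas of the form $y=ax^{2}+bx+c$ is
$$(q^{2}-1)\,q^{2}\,q^{2} \;=\; q^{4}(q^{2}-1) \;=\; q^{2}(q+1)\cdot q^{2}(q-1).$$
Theorem~\ref{teo.principe} asserts (for $q$ odd) that every parabola has a planar intersection with $\He$ in the finite list $\{0,1,q-1,q,q+1,2q-1,2q\}$, and all counts other than that for the value $q$ have been produced in the preceding case analysis. Hence the number of parabolas meeting $\He$ in exactly $q$ points is forced by subtraction.

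The first step I would carry out is simply to sum the six already established counts. Setting
$$\Sigma \;=\; \frac{q-1}{2}+\frac{q(q-3)}{2}+\frac{q(q-1)^{2}}{2}+\frac{q(q-1)(q-3)}{2}+\frac{q(q-1)}{2}+\frac{q-1}{2},$$
grouping the two $\frac{q-1}{2}$ terms and collecting the rest over the common factor $\frac{1}{2}$ gives
$$\Sigma \;=\; (q-1) + \tfrac{1}{2}\bigl[q(q-3)\bigl(1+(q-1)\bigr) + q(q-1)\bigl((q-1)+1\bigr)\bigr] \;=\; (q-1)+q^{2}(q-2),$$
that is, $\Sigma = q^{3}-2q^{2}+q-1$. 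The remainder is
$$q^{2}(q+1)\bigl[q^{2}(q-1)-\Sigma\bigr] \;=\; q^{2}(q+1)\bigl[q^{3}-q^{2}-q^{3}+2q^{2}-q+1\bigr] \;=\; q^{2}(q+1)(q^{2}-q+1),$$
which is exactly the claimed number.

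The main obstacle is not the arithmetic but certifying that the six counts already in hand are genuinely exhaustive of the non-$q$ cases, with no double-counting. This is guaranteed by the way the case distinction was set up: every parabola can be moved by some element of $\Gamma$ (see \eqref{autom}) to a canonical representative; the case $\Delta=1-4\No(a)\neq 0$ yields a full orbit of size $q^{3}$ for each pair $(a,\Tra(c))$, and the refined bookkeeping in the $\Delta=0$ case (relying on the identity $\Tra(\bar c)=a^{q}b^{2}+\Tra(c)$ derived earlier) produces disjoint families of orbits for the values $0$, $2q$ and $q$. Since the only unaccounted orbits in the $\Delta=0$, $\Tra(c)=0$ situation are precisely those representing parabolas $y=a(x+v)^{2}$ with $v^{q}+2av\neq 0$ (which intersect $\He$ in $q$ points by the theorem proved just above), all parabolas not yet classified fall into the $q$-intersection class, so the count obtained by subtraction is correct.
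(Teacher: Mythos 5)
Your proposal is correct and is essentially the paper's own argument: the paper explicitly states that this last class of parabolas ``cannot be easily counted'' and obtains its size by difference, verifying that all seven counts sum to the total $q^4(q^2-1)$, which is arithmetically the same as your subtraction. Your closing paragraph on the exhaustiveness and disjointness of the orbit decomposition makes explicit what the paper leaves implicit, but the route is the same.
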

\begin{proof}
The number of total parabolas is $q^4(q^2-1)$. By summing all parabolas that we already counted we obtain
$$q^2(q+1)\left(2\frac{q-1}{2}+q\frac{q-1}{2}(q-1+q-3)+\frac{q}{2}(q-1+q-3)+q^2-q+1\right)=$$
$$=q^2(q+1)(q^2(q-1))=q^4(q^2-1).$$
But this is exactly the number of parabolas, so we counted them all.
\end{proof}

We have proved the following theorems, depending on the condition\\ $\Tra(c)=0$ or $\Tra(c) \ne 0$.
\begin{theorem}\label{teo.intTr0}
Let $q$ be odd. The Hermitian curve $\He$ intersects a parabola $y=ax^2+c$ with $\Tra(c)=0$ in $2q-1, q$ or $1$ points.\\ 
Moreover, we have
\begin{description}
  \item[]$(q+1)\frac{q-1}{2}q^3$ parabolas that intersect $\He$ in $2q-1$ points.
  \item[]$q^2(q+1)(q^2-q+1)$ parabolas that intersect $\He$ in $q$ points.
  \item[]$(q+1)\frac{q-3}{2}q^3$ parabolas that intersect $\He$ in $one$ point.
\end{description}
\end{theorem}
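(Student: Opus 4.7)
The plan is to reduce the problem to the canonical form $y = ax^2$ via Lemma~\ref{lemtraccia}, which asserts that parabolas sharing the trace of their constant term meet $\He$ in the same number of points; since $\Tra(c) = 0$, the intersection of $y = ax^2 + c$ with $\He$ equals that of $y = ax^2$. Substituting into the Hermitian equation gives $F_a(x) = 0$, which factors as $x^2(a^q x^{2q-2} - x^{q-1} + a) = 0$. The root $x = 0$ always contributes one point, so the remaining analysis reduces to the quadratic $a^q t^2 - t + a$ in $t = x^{q-1}$, whose discriminant $\Delta = 1 - 4\No(a)$ lies in $\Fq$.

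Next, a trichotomy on $\Delta$ yields the three possible intersection values. If $\Delta = 0$, Lemma~\ref{deltaquad} rewrites the equation as $a^q x^2(x^{q-1} - 2a)^2 = 0$; since $\No(2a) = 1$, Lemma~\ref{sol} supplies $q - 1$ further solutions and the total is $q$. If $\Delta = z^2$ with $z \in \Fq^*$, solving $x^{q-1} = (1 \pm z)/(2a^q)$ requires by Lemma~\ref{sol} that the right-hand side have norm $1$, which simplifies to $z^q = -z$; but for $z \in \Fq^*$ and $q$ odd this forces $z = 0$, a contradiction, so only $x = 0$ survives, giving $1$ intersection. If $\Delta$ is a non-square in $\Fq^*$, then $z \notin \Fq$ while $z^2 \in \Fq$, forcing $z^q = -z$; both linear factors now satisfy the norm condition of Lemma~\ref{sol}, each contributing $q - 1$ solutions, for a total of $2q - 1$.

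For the counts, by Lemma~\ref{solNTR} the condition $\No(a) = (1 - \Delta)/4$ admits $q + 1$ solutions whenever its right-hand side is nonzero (i.e. excluding the forbidden $\Delta = 1$, equivalent to $a = 0$). Enumerating admissible $\Delta$ gives one value $\Delta = 0$, then $(q-3)/2$ nonzero squares different from $1$, and $(q-1)/2$ non-squares. When $\Delta \ne 0$, the $\Gamma$-orbit of $y = ax^2$ under the automorphisms (\ref{autom}) has size exactly $q^3$: indeed, if two automorphisms produced the same image, the equality of linear coefficients forces $2a(\bar\gamma - \gamma) = (\bar\gamma - \gamma)^q$, which combined with $\bar\gamma \ne \gamma$ would yield $4a^{q+1} = 1$, i.e. $\Delta = 0$, a contradiction. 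Multiplying the $a$-counts by $q^3$ yields $q^3(q+1)(q-1)/2$ parabolas with $2q - 1$ intersections and $q^3(q+1)(q-3)/2$ parabolas with $1$ intersection.

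The main obstacle is the count for $q$ intersections. When $\Delta = 0$ the orbit is no longer maximal: the stabilizer of $y = ax^2$ inside $\Gamma$ picks up the $q - 1$ nonzero solutions of $\gamma^{q-1} = 2a$ (each with $\delta = a\gamma^2$), shrinking the orbit to size $q^2$. Moreover, parabolas in the $\Tra(c) \ne 0$ analysis—namely $y = a(x+v)^2$ with $\No(2a) = 1$ and $v^q + 2av \ne 0$, treated earlier in Section~\ref{odd}—also contribute $q$-intersection parabolas. Rather than reconstruct both contributions and verify that they assemble to the claimed total, the cleanest strategy is the global subtraction argument used in the Claim preceding the theorem: the total number of parabolas over $\FQ$ is $q^4(q^2 - 1)$, and since all other intersection values ($0$, $q - 1$, $q + 1$, $2q$, plus the $1$ and $2q - 1$ just computed) have been exhaustively enumerated, the $q$-intersection class must consist of exactly the remaining $q^2(q+1)(q^2 - q + 1)$ parabolas.
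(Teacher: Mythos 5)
Your proposal is correct and follows essentially the same route as the paper: reduction to $y=ax^2$ via Lemma~\ref{lemtraccia}, the trichotomy on $\Delta=1-4\No(a)$ combined with the norm criterion of Lemma~\ref{sol} (reducing to $z^q=\pm z$), the count of admissible $a$'s via Lemma~\ref{solNTR}, the maximal-orbit argument for $\Delta\ne 0$, and the global subtraction argument for the $q$-intersection class. The only addition is your explicit computation of the stabilizer (orbit size $q^2$) when $\Delta=0$, which the paper leaves implicit since, like you, it ultimately obtains that count by difference.
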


\begin{theorem}\label{teo.intTrn0}
Let $q$ be odd. The Hermitian curve $\He$ intersects a parabola $y=ax^2+c$ with $\Tra(c)\ne 0$
in $2q, q+1,q-1$ or $0$ points.\\ 
Moreover, we have
\begin{description}
  \item[]$q^2(q+1)\frac{q-1}{2}$ parabolas that intersect $\He$ in $2q$ points.
  \item[]$q^3(q+1)(q-1)\frac{q-3}{2}$ parabolas that intersect $\He$ in $q+1$ points.
  \item[]$q^3(q+1)\frac{(q-1)^2}{2}$ parabolas that intersect $\He$ in $q-1$ points.
  \item[]$q^2(q+1)\frac{q-1}{2}$ parabolas that intersect $\He$ in $0$ point.
\end{description}
\end{theorem}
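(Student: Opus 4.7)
The plan is to reduce an arbitrary parabola $y=ax^2+bx+c$ to a canonical representative under the action of the automorphism subgroup $\Gamma$ of $\He$ given by (\ref{autom}), compute the intersection number from the representative via the function $F_a$, and then count parabolas weighted by orbit size. Throughout, the case split is governed by the discriminant $\Delta:=1-4\No(a)$.

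First I would show every parabola is $\Gamma$-equivalent to one of the form $y=ax^2+c$. By (\ref{autpartot}) this amounts to solving $2a\gamma-\gamma^q=-b$; the left-hand side is the $\Fq$-linear map $f$ of Lemma~\ref{lemlin}, so Corollary~\ref{corFON} guarantees a solution exactly when $-b\in\I(f)$. When $\Delta\ne 0$, the map $f$ is a bijection of $\FQ$. When $\Delta=0$, the identity $2ab^q+b=2ak^q+k$ in (\ref{bdiv}) with $k=0$ forces $2ab^q+b=0$, which puts $-b$ into $\I(f)$ as well. Lemma~\ref{lemtraccia} then collapses the problem further: the intersection number depends only on $a$ and $\Tra(c)$.

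Next, intersecting $y=ax^2+c$ with $\He$ yields $F_a(x)=\Tra(c)$, with $F_a$ as in (\ref{prin}), and I would split on $\Delta$. If $\Delta\ne 0$ is a square in $\Fq$, the factorization (\ref{eqini}) together with Lemma~\ref{lem:ab} shows $F_a$ is $(q+1)$-to-$1$ onto $\Fq^*$, so there are $q+1$ intersections. If $\sqrt\Delta\notin\Fq$, the analogous argument yields $q-1$ intersections. If $\Delta=0$, Lemma~\ref{deltaquad} rewrites the equation as $f(x)^2=-4a\Tra(c)$ with $f(x)=x^q-2ax$; since $-4a\Tra(c)$ is a square in $\FQ$ by Lemma~\ref{aquad}, Corollary~\ref{corFON} produces either $2q$ or $0$ solutions depending on whether the square root lies in $\I(f)$.

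Finally, I would count orbits. When $\Delta\ne 0$, the stabilizer of $y=ax^2+c$ in $\Gamma$ is trivial: any coincidence forces $2a(\bar\gamma-\gamma)=(\bar\gamma-\gamma)^q$, hence $4\No(a)=1$, a contradiction. So each orbit has full size $q^3$. Applying Lemma~\ref{sol} to $\No(2a)=1-\Delta$ produces $(q+1)\tfrac{q-3}{2}$ values of $a$ with $\sqrt\Delta\in\Fq^*\setminus\{1\}$ and $(q+1)\tfrac{q-1}{2}$ with $\sqrt\Delta\notin\Fq$; multiplying by the $q^3$ orbit size and the $q-1$ nonzero trace choices yields the $q+1$- and $q-1$-intersection counts. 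The main obstacle is the $\Delta=0$ case, where the orbit structure degenerates. I would sidestep this by directly counting admissible pairs $(b,\bar c)$ for fixed $a$ via (\ref{tracce}): the identity $\Tra(\bar c)=a^q b^2+\Tra(c)$ gives $q^2(q-1)$ pairs per admissible $a$, and the sign symmetry of $f(x)=\pm\alpha^h$ splits these evenly between the $2q$- and $0$-intersection cases, yielding $q^2(q+1)\tfrac{q-1}{2}$ parabolas of each type.
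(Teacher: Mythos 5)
Your route is essentially the paper's: compute the intersection numbers of $y=ax^2+c$ from $F_a(x)=\Tra(c)$ with the trichotomy on $\Delta=1-4\No(a)$, then count parabolas by $\Gamma$-orbits (trivial stabilizer and orbit size $q^3$ when $\Delta\ne 0$, direct enumeration of the pairs $(b,\bar c)$ via (\ref{tracce}) when $\Delta=0$), and your arithmetic for the $q+1$ and $q-1$ rows ($(q+1)\tfrac{q-3}{2}$ resp.\ $(q+1)\tfrac{q-1}{2}$ values of $a$, times $q^3$, times $q-1$ trace classes) matches the paper exactly. Two points, however, deserve attention.

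First, your opening claim that \emph{every} parabola is $\Gamma$-equivalent to one of the form $y=ax^2+c$ is false, and the justification is backwards: (\ref{bdiv}) is the implication ``if $2a\gamma-\gamma^q+b=k$ is solvable, then $2ab^q+b=2ak^q+k$,'' not its converse, and $2ab^q+b=0$ is a genuine restriction on $b$. When $\Delta=0$ the map $f(\gamma)=2a\gamma-\gamma^q$ has image of size only $q$, and the parabolas with $2ab^q+b\ne 0$ are \emph{not} equivalent to any $y=ax^2+c$; the paper handles them separately as $y=a(x+v)^2$ with $v^q+2av\ne 0$ and shows they meet $\He$ in $q$ points (they feed the $q$-intersection row of Theorem~\ref{teo.intTr0}, not this theorem). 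This error happens to be harmless here, since your $\Delta=0$ count via (\ref{tracce}) enumerates only the orbit of $y=ax^2+c$, but the claim as stated would break the global classification. Second, in the $\Delta=0$ case the sign symmetry $x\mapsto -x$ between $f(x)=\alpha^h$ and $f(x)=-\alpha^h$ only shows $\alpha^h\in\I(f)\iff -\alpha^h\in\I(f)$, i.e.\ that the solution count is $2q$ or $0$ rather than $q$; it says nothing about how the $q-1$ nonzero values of $\Tra(c)$ distribute between the two outcomes. The even split requires a separate argument (essentially that $\I(f)\setminus\{0\}$ is a coset $\lambda\Fq^*$ and that $-4a\Tra(c)$ sweeps a full coset $-4a\Fq^*$ meeting $\lambda^2(\Fq^*)^2$ in exactly half its elements); to be fair, the paper asserts this split with comparable brevity.
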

Therefore, thanks to Theorem \ref{teo.intTr0} and to Theorem \ref{teo.intTrn0}, 
we obtain the first half of Theorem \ref{teo.principe}.


\subsection{Even characteristic}\label{even}

In this subsection, $q$ is always even.

In this case we claim that it is enough to consider just two special cases:
$y=ax^2$ and $y=ax^2+c$. Before studing the two cases, we consider the following lemma:
\begin{lemma}\label{lem:abeven}
Let $x=\alpha^j\beta^i$, with $j=0,\ldots,q$ and $i=0,\ldots,q-2$; then the
values $F_a(\alpha^j\beta^i)$ that are not zero are all the elements of $\Fq^*$.
\end{lemma}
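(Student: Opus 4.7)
The plan is to reduce the statement to a single existence claim via Lemma~\ref{lemmaFa}, and then exploit that Frobenius is a bijection in characteristic two.

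First, by Lemma~\ref{lemmaFa}, for every $i,j$ we have
$$F_a(\alpha^j\beta^i) = \beta^{2i}\,F_a(\alpha^j).$$
Since $q$ is even, the squaring map $\omega\mapsto\omega^{2}$ is a bijection on $\Fq^{*}$, so the set $\{\beta^{2i}\mid 0\le i\le q-2\}$ equals $\Fq^{*}$. Consequently, for any fixed $j$ with $F_a(\alpha^j)\neq 0$, the set
$$\{F_a(\alpha^j\beta^i)\mid 0\le i\le q-2\} \;=\; F_a(\alpha^j)\cdot\Fq^{*} \;=\; \Fq^{*}.$$
Thus the non-zero values already exhaust $\Fq^{*}$ using just one good index $j$.

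It remains to exhibit some $j\in\{0,\dots,q\}$ with $F_a(\alpha^j)\neq 0$. The key observation is that $\{\alpha^j\beta^i : 0\le j\le q,\;0\le i\le q-2\}$ enumerates all of $\FQ^{*}$; so if $F_a(\alpha^j)=0$ for every such $j$, the scaling identity above would force $F_a\equiv 0$ on $\FQ^{*}$. I would then rule this out by a degree count: writing
$$F_a(x)\;=\;-x^{2}\bigl(a^{q}x^{2q-2}-x^{q-1}+a\bigr)$$
and setting $t=x^{q-1}$, the vanishing of $F_a$ on $\FQ^{*}$ would mean the quadratic $Q(t)=a^{q}t^{2}-t+a$ vanishes on the image of $x\mapsto x^{q-1}$, which is the group of $(q+1)$-th roots of unity in $\FQ$ and so has $q+1\ge 3$ elements. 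A quadratic with more than two roots must be identically zero, which contradicts $a\neq 0$.

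The main (minor) obstacle is simply verifying surjectivity of the squaring map, which is where the even-characteristic hypothesis enters; the analogous odd-characteristic statement in Lemma~\ref{lem:ab} only recovered half of $\Fq^{*}$ from a single $j$, which is why the odd case required a separate analysis varying $j$. Here everything collapses into a one-line argument once the nonvanishing of some $F_a(\alpha^j)$ is secured.
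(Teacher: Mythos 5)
Your proof is correct and follows essentially the same route as the paper's: both rest on the scaling identity $F_a(\alpha^j\beta^i)=\beta^{2i}F_a(\alpha^j)$ from Lemma~\ref{lemmaFa} together with the fact that $\beta^2$ is primitive in $\Fq$ when $q$ is even, so one nonvanishing $F_a(\alpha^j)$ already sweeps out all of $\Fq^*$. Your closing degree-count argument (that the quadratic $a^qt^2-t+a$ cannot vanish on all $q+1\geq 3$ of the $(q+1)$-th roots of unity) supplies the existence of such a $j$, a point the paper's proof leaves implicit with ``if $F_a(\alpha^j)=0$ we have finished.''
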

\begin{proof}
Fixing a index $j$, we have $F_a(\alpha^j \beta^i)=\beta^{2i}F_a(\alpha^j)$;
if $F_a(\alpha^j)=0$ we have finished, otherwise (by Lemma \ref{lemmaFa})
$\beta^{2i}F_a(\alpha^j)$ are all elements of $\Fq^*$,
because also $\beta^2$ is a primitive element of $\Fq$.
\end{proof}

We divide the study into two parts.

\begin{itemize}
  \item[$*$] Case $y=ax^2$. We intersect $\He$ with $y=ax^2$. As in (\ref{eqquad}),
we obtain $x^2(a^q x^{2q-2}-x^{q-1}+a)=0$. We set $x^{q-1}=t$
and we have to solve the equation $a^q t^2-t+a=0$. Setting $z=ta^q$ we obtain
$$z^2+z+a^{q+1}=0.$$

It is known that this equation has solutions in a field of characteristic even
if and only if $\Tr{\FQ}{\F}{a^{q+1}}=0$ \cite{CGC-alg-book-jungnickel1993}. And this latter condition holds, since we have
$$\Tr{\FQ}{\F}{a^{q+1}}=\Tr{\Fq}{\F}{\Tr{\FQ}{\Fq}{a^{q+1}}}=\Tr{\Fq}{\F}{0}=0.$$
We also have $\No(t)=1$, in fact $t^{q+1}=(x^{q-1})^{q+1}=1$, then we have
$$z^{q+1}=\No(z)=\No(a^q)=a^{q^2+q}=a^{q+1}=\No(a)$$
and so the equation becomes
$$z^2+z+z^{q+1}=0.$$
since $t\ne 0,z\ne 0$, then we must have
$$z^q+z=1.$$
We can note that, since $a^{q+1}\in\Fq$, then it is possible to calculate its trace
from $\Fq$ to $\F$, and we obtain
$$\Tr{\Fq}{\F}{a^{q+1}}=\Tr{\Fq}{\F}{z^{q+1}}=\Tr{\Fq}{\F}{z^2+z}=
z+z^2+z^2+z^4+\ldots+z^{q/2}+z^q=z+z^q;$$
If it is equal to $0$, we have a contradiction, then there is not any solution $x\in\FQ$.
On the other hand if it is equal to $1$, then we have solutions.

When the solutions exist, since $z^{q+1}=a^{q+1}$,
 a solution $z$ is $a\alpha^{j(q-1)}$, for some $j$,
and the other is $z+1$, which we can write as $a\alpha^{j'(q-1)}$.
From each of these we have the corresponding $t=(\frac{\alpha^j}{a})^{q-1}$ and so the 
$x$'s are $\frac{\alpha^{j+i(q+1)}}{a}=\frac{\alpha^j \beta^i}{a}$ and
$\frac{\alpha^{j'+i(q+1)}}{a}=\frac{\alpha^{j'}\beta^i}{a}$, with $i=0,\ldots,q-2$.

We can summarize:
\begin{itemize}
  \item[-] If $\Tr{\Fq}{\F}{a^{q+1}}=0$, there is only $one$ solution.
  This happens when $a=0$, which it is not acceptable, and for other $\frac{q}{2}-1$ values of
  $a^{q+1}$, so the possible values of $a$ are $(\frac{q}{2}-1)(q+1)$.
  \item[-] If $\Tr{\Fq}{\F}{a^{q+1}}=1$, there are $2q-1$ solutions. This happens
  for $\frac{q}{2}$ values of $a^{q+1}$, so the possible values of $a$ are
  $\frac{q}{2}(q+1)$.
\end{itemize}

As in the odd case, we apply the automorphism (\ref{autom}) to the parabolas
of type $y=ax^2$ and we can see that distinct automorphisms generate distinct parabolas.
We omit the easy adaption of our earlier proof.\\ 
So we have proved the following theorem:
\begin{theorem}\label{teo.intc0}
The Hermitian curve $\He$ and the parabola $y=ax^2$ intersects in either $one$ point or $2q-1$ points.\\ 
Moreover, from the application of (\ref{autom}) to these parabolas, we obtain:
\begin{description}
  \item[]$q^3(\frac{q}{2}-1)(q+1)$ parabolas with $one$ point of intersection with $\He$.
  \item[]$q^3\frac{q}{2}(q+1)$ parabolas with $2q-1$ points of intersection with $\He$.
\end{description}
\end{theorem}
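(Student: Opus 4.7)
The plan is to intersect $y=ax^2$ with $\He$, reduce the resulting equation to an Artin–Schreier-style quadratic in $\FQ$, and then translate the solvability condition into a trace condition on $a^{q+1}$ lying in $\Fq$. Substituting $y=ax^2$ into $x^{q+1}=y^q+y$ and working in characteristic two gives
\begin{equation*}
x^2\bigl(a^q x^{2q-2}+x^{q-1}+a\bigr)=0,
\end{equation*}
so $x=0$ is always a solution. For $x\ne0$ set $t=x^{q-1}$ to obtain $a^q t^2+t+a=0$, and then let $z=a^q t$ so the equation becomes $z^2+z+a^{q+1}=0$. This has two roots in $\FQ$, since $a^{q+1}\in\Fq$ forces $\Tr{\FQ}{\F}{a^{q+1}}=\Tr{\Fq}{\F}{\Tr{\FQ}{\Fq}{a^{q+1}}}=\Tr{\Fq}{\F}{0}=0$.

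Not every root $z$ yields an $x\in\FQ$: by Lemma \ref{sol} we need $\No(t)=1$, that is $z^{q+1}=a^{q+1}$, and using $a^{q+1}=z^2+z$ this simplifies for $z\ne 0$ to $z^q+z=1$. Applying Frobenius to the quadratic shows that $z^q$ is again a root, hence $z^q\in\{z,z+1\}$. The case $z^q=z$ (i.e.\ $z\in\Fq$) gives $z^q+z=0\ne 1$, so no lifting to $x\ne 0$ exists; the case $z^q=z+1$ gives exactly the required identity. Thus a nonzero $x$-solution exists iff the roots of $z^2+z+a^{q+1}=0$ lie outside $\Fq$, i.e.\ iff $\Tr{\Fq}{\F}{a^{q+1}}=1$. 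In that case each of the two roots supplies $q-1$ values of $x$ by Lemma \ref{sol}, giving $2(q-1)+1=2q-1$ intersection points; otherwise only $x=0$ remains and there is just $1$ point. This proves the dichotomy in the theorem.

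For the count, the kernel of $\Tr{\Fq}{\F}{\cdot}$ has size $q/2$, so the nonzero $u\in\Fq$ with $\Tr{\Fq}{\F}{u}=0$ number $q/2-1$ and with $\Tr{\Fq}{\F}{u}=1$ number $q/2$; by Lemma \ref{solNTR} each nonzero value $u=a^{q+1}$ has exactly $q+1$ preimages $a\in\FQ^*$, so there are $(q/2-1)(q+1)$ values of $a$ giving a single intersection and $(q/2)(q+1)$ giving $2q-1$ intersections. Finally, I would apply the subgroup $\Gamma\le\mathrm{Aut}(\He)$ of order $q^3$ from (\ref{autom}): by (\ref{par}) the image of $y=ax^2$ under $\sigma_{\gamma,\delta}$ has leading coefficient $a$ and linear coefficient $2a\gamma-\gamma^q=\gamma^q$ in characteristic two. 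The stabilizer of $y=ax^2$ therefore forces $\gamma^q=0$ and $a\gamma^2+\delta=0$, hence $\gamma=\delta=0$, so every orbit has full size $q^3$; distinct choices of $a$ produce disjoint orbits (leading coefficients differ), and the intersection number with $\He$ is preserved along each orbit. Multiplying the two $a$-counts by $q^3$ yields the claimed totals $q^3(q/2-1)(q+1)$ and $q^3(q/2)(q+1)$.

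The main obstacle I expect is the equivalence chain of the second paragraph: one must carefully identify the extra constraint $\No(t)=1$ needed to lift $t$ back to $x\in\FQ$ and convert it, via the Frobenius action on the roots of $z^2+z+a^{q+1}=0$, into the clean trace condition $\Tr{\Fq}{\F}{a^{q+1}}=1$. The remaining orbit computation under $\Gamma$ is routine once one observes that in even characteristic the $2a\gamma$ term in (\ref{par}) vanishes and so the coefficient of $x$ detects $\gamma$ uniquely.
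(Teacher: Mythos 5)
Your proposal is correct and follows essentially the same route as the paper: the same reduction $t=x^{q-1}$, $z=a^qt$ to $z^2+z+a^{q+1}=0$, the same lifting condition $\No(t)=1\iff z^q+z=1$, the same trace dichotomy on $a^{q+1}$, and the same orbit count under $\Gamma$. The only (welcome) additions are that you phrase the trace criterion via the Frobenius action on the two roots rather than the paper's telescoping computation of $\Tr{\Fq}{\F}{z^2+z}$, and that you spell out the trivial-stabilizer argument ($\gamma^q=0$, $\delta=a\gamma^2=0$) which the paper omits as an ``easy adaption.''
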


\item[*] Case $y=ax^2+c$ with $\Tra(c)\ne 0$. We consider the equation (\ref{prin}).
We divide the problem into two parts:\\
\begin{itemize}
  \item[-] If $\Tr{\Fq}{\F}{a^{q+1}}=0$, we know that $F_a(x)$ is equal to zero only for $x=0$. 
  If $x\ne 0$, then by Lemma \ref{lem:abeven} if we fix $j$ we have that
  $F_a(x)=F_a(\alpha^j\beta^i)=\beta^{2i}F_a(\alpha^j)$ are all the elements of $\Fq^*$.
  But $j$ can assume $q+1$ distinct values, so any value of $\Fq^*$ can be obtained $q+1$ times.
  So, the equation $F_a(x)=\Tra(c)$ has exactly $q+1$ solutions.\\

  \item[-] If $\Tr{\Fq}{\F}{a^{q+1}}=1$, $F_a(x)=0$ has $2q-1$ solutions.
  So, if we fix an index $j$, the values of $F_a(\alpha^j\beta^i)=\beta^{2i}F_a(\alpha^j)$
  are all equal to zero or are all the elements of $\Fq^*$. There are exactly
  two distinct values of $j$ that give zero, so any non-zero value of $\Fq$ can be obtained $q-1$ times.
  So, the equation $F_a(x)=\Tra(c)$ has exactly $q-1$ solutions.
\end{itemize}

\noindent We apply the automorphism (\ref{autom}) to the parabola $y=ax^2+c$
and we obtain (\ref{parc}). These are all distinct and different from those of Theorem~\ref{teo.intc0},
because the planar intersection of $\He$ and the previous parabolas are different. 
The values of $c$ such that $\Tra(c)\ne 0$ are exactly $q^2-q$,
but we must be careful and not count twice the same parabola.
In particular, if two parabolas share a and b, then they are in the
same orbit if $\Tra(c)=\Tra(\bar c)$. So we must consider only one of these
for any non-zero value of $\Tra(c)$. These values are $q-1$.\\
Summarizing, we have proved the following theorem:
\begin{theorem}\label{teo.intcn0}
The Hermitian curve $\He$ and the parabola $y=ax^2+c$ with $\Tra(c)\ne 0$ intersect in  either $q+1$ or $q-1$ points.\\ 
Moreover, from the application of (\ref{autom}) to these parabolas, we obtain:
\begin{description}
  \item[]$q^3(\frac{q}{2}-1)(q+1)(q-1)$ parabolas (with $\Tr{\Fq}{\F}{a^{q+1}}=0$)
  with $q+1$ points of intersection with $\He$.
  \item[]$q^3\frac{q}{2}(q+1)(q-1)$ parabolas (with $\Tr{\Fq}{\F}{a^{q+1}}=1$)
  with $q-1$ points of intersection with $\He$.
\end{description}
\end{theorem}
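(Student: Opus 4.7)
The plan is to combine an analysis of the intersection equation $F_a(x) = \Tra(c)$ with a careful counting argument via the automorphism group $\Gamma$, following the template already set up in the odd-characteristic case but using the even-characteristic facts about $F_a$ established in the $y=ax^2$ discussion.

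First I would substitute the parabola into the Hermitian equation (\ref{eqHerm}) to obtain, as in (\ref{eqiniziale}), the equation $F_a(x) = \Tra(c)$ with $F_a$ as defined in (\ref{prin}). The intersection count is exactly $|F_a^{-1}(\Tra(c))|$. From the preceding analysis of the case $y = ax^2$, I already know the zero set of $F_a$: when $\Tr_{\Fq/\F}(a^{q+1})=0$ only $x=0$ is a root, while when $\Tr_{\Fq/\F}(a^{q+1})=1$ there are $2q-1$ roots. Combining this with Lemma~\ref{lemmaFa} and Lemma~\ref{lem:abeven}, for each fixed exponent $j$ the set $\{F_a(\alpha^j\beta^i)\}_i$ is either $\{0\}$ or all of $\Fq^*$. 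Since there are $q+1$ admissible $j$'s, in the first case exactly one $j$ yields $0$ (namely $x=0$), so every value in $\Fq^*$ is attained by $F_a$ with multiplicity $q+1$; in the second case exactly two $j$'s yield the all-zero fiber (accounting for the $2(q-1)+1 = 2q-1$ zeros), so every value in $\Fq^*$ is attained with multiplicity $q-1$. Hence $|F_a^{-1}(\Tra(c))|$ is $q+1$ or $q-1$ respectively, proving the first assertion.

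Next I would handle the enumeration. Applying the automorphism (\ref{autom}) to $y=ax^2+c$ gives the parametrized family (\ref{parc}). Since $\Gamma$ has $q^3$ elements and these parabolas have planar intersection $q\pm 1$ with $\He$ (differing from the values $1$ and $2q-1$ of Theorem~\ref{teo.intc0}), they are disjoint from the previously counted family. I then must verify (i) that the $q^3$ parabolas obtained by varying $(\gamma,\delta)\in\He$ for fixed $(a,c)$ are pairwise distinct, and (ii) that two such parameter triples $(a,\gamma,\delta,c)$ and $(a,\bar\gamma,\bar\delta,\bar c)$ give the same parabola precisely when they lie in the same $\Gamma$-orbit, which by Lemma~\ref{lemtraccia} amounts to $\Tra(c)=\Tra(\bar c)$. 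This collapses the $q^2-q$ admissible values of $c$ down to $q-1$ genuinely different orbits per $(a,\gamma,\delta)$.

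Finally I would multiply the relevant counts. The number of $a\in\FQ$ with $\Tr_{\Fq/\F}(a^{q+1})=0$ (and $a\ne 0$) is $(q/2-1)(q+1)$, and with $\Tr_{\Fq/\F}(a^{q+1})=1$ is $(q/2)(q+1)$, as already computed in the proof of Theorem~\ref{teo.intc0}. Multiplying these by the $q^3(q-1)$ distinct parabolas per $a$ yields exactly the claimed totals $q^3(q/2-1)(q+1)(q-1)$ and $q^3(q/2)(q+1)(q-1)$. The only subtle step I expect to be slightly delicate is the orbit argument for distinctness in (i)--(ii): one has to check that $(2a\gamma-\gamma^q, a\gamma^2-\delta)$ is injective in $(\gamma,\delta)$ on the curve $\He$ when $a\ne 0$, which in characteristic $2$ reduces to verifying that equal images force $(\gamma-\bar\gamma)^q = 0$ and then $\delta=\bar\delta$ via (\ref{eqHerm}). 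This is routine but must be done explicitly since the odd-characteristic trick using $1-4a^{q+1}\ne 0$ is no longer available.
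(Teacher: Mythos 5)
Your proposal is correct and follows essentially the same route as the paper: the intersection counts come from the fiber structure of $F_a$ over $\Fq^*$ (Lemma~\ref{lem:abeven} together with the zero-set analysis of the $y=ax^2$ case), and the enumeration comes from the $\Gamma$-orbit of size $q^3$ combined with collapsing the $q^2-q$ admissible $c$'s to $q-1$ trace classes; your explicit characteristic-$2$ injectivity check of $(\gamma,\delta)\mapsto(\gamma^q,\,a\gamma^2-\delta)$ is exactly the ``easy adaption'' the paper omits. One harmless slip: in the case $\Tr{\Fq}{\F}{a^{q+1}}=0$ you should say that \emph{no} $j$ yields the zero fiber (the unique zero $x=0$ lies outside the parametrization $x=\alpha^j\beta^i$), which is what makes all $q+1$ fibers biject onto $\Fq^*$ and gives multiplicity $q+1$.
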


\noindent  By summing all parabolas that we have found in Theorem \ref{teo.intc0} and Theorem \ref{teo.intcn0}, we obtain
$$q^3(q+1)\left(\frac{q}{2}-1+\frac{q}{2}+(q-1)(\frac{q}{2}-1+\frac{q}{2})\right)=$$
$$=q^3(q+1)(q-1)(1+q-1)=q^4(q^2-1).$$
Since this is exactly the total number of the parabolas, this means that we actually
considered all parabolas, and so we obtain the second half of Theorem \ref{teo.principe}.
\end{itemize}


\section{Small-weight codewords of Hermitian Codes}
\label{codicihermitiani}
We recall that an affine-variety code is  $\I(\phi(L))$, where $\phi$ is as
(\ref{eval}). We consider a special case of affine-variety code, which is a Hermitian code.\\

Let $I=\langle y^q+y-x^{q+1},x^{q^2}-x,y^{q^2}-y\rangle\subset\FQ[x,y]$ and let
$R=\FQ[x,y]/I$. We take $L\subseteq R$ generated by
$$\Bmq=\{x^ry^s+I\mid qr+(q+1)s\leq m,\,\,0\leq s\leq q-1,\,\,0\leq r\leq q^2-1\},$$
where $m$ is an integer such that $0\leq m\leq q^3+q^2-q-2$. For simplicity, we also write
$x^ry^s$ for $x^ry^s+I$. We consider the evaluation map (\ref{eval})
$\phi:R\rightarrow(\FQ)^n$. We have the following affine--variety codes
$C(I,L)=\mathrm{Span}_{\FQ}\langle\phi(\Bmq)\rangle$
and we denote by $C(m,q)=(C(I,L))^{\perp}$ its dual.
Then the affine--variety code $C(m,q)$ is called the \textit{Hermitian code} with parity-check matrix $H$.
\begin{equation}\label{matpar}
  H=\left(
    \begin{array}{ccc}
      f_1(P_1) & \dots & f_1(P_n)\\
      \vdots & \ddots & \vdots\\
      f_i(P_1) & \dots & f_i(P_n)\\
    \end{array}
  \right)\textrm{ where }f_j\in\Bmq
\end{equation}
The Hermitian codes can be divided in four phases (\cite{CGC-cd-phdthesis-marco}), any of them
having specific explicit formulae linking their dimension and their distance, as in Table~\ref{Tab}.

{\scriptsize{
\begin{table}
\caption{The four \textquotedblleft phases\textquotedblright of Hermitian codes}\label{Tab}
\begin{center}
\begin{tabular}[h]{|cccc|}
\hline\noalign{\smallskip}
\textbf{Phase } & $\mathbf{m}$ & \textbf{Distance} $\mathbf{d}$ & \textbf{Dimension} $\mathbf{k}$\\
\noalign{\smallskip}
\hline
\noalign{\smallskip}
\textbf{1} & {\scriptsize$\begin{array}{c}
  0\leq m\leq q^2-2\\
  m=aq+b\\
  0\leq b\leq a\leq q-1\\ 
  b\leq q-2
\end{array}$} & {\scriptsize$\begin{array}{ll}
  a+1 & a>b\\
  a+2 & a=b
\end{array}\iff d\leq q$} & {\scriptsize$
  q^3-\frac{a(a+1)}{2}-(b+1)$}\\
  & & &\\
\textbf{2} & {\scriptsize$\begin{array}{c}
  q^2-1\leq m\leq 2q^2-2q-2\\
  m=2q^2-q-aq-b-1\\
  1\leq a\leq q-1\\
  0\leq b\leq q-1
\end{array}$} & {\scriptsize$\begin{array}{ll}
  (q-a)q-b &\,\,a<b\\
  (q-a)q &\,\,a\geq b
\end{array}$} & {\scriptsize$
  q^3-m+\frac{q(q-1)}{2}-1
$}\\
  & & &\\
\textbf{3} & {\scriptsize$2q^2-2q-1\leq m \leq q^3-1$} & {\footnotesize$m-q^2+q+1$}& {\scriptsize$q^3-m+\frac{q(q-1)}{2}-1$}\\
  & & &\\
\textbf{4} & {\scriptsize$\begin{array}{c}
  q^3\leq m < q^3+q^2-q-1\\
  m=q^3+aq+b\\
  0\leq a\leq q-1,\\
  0\leq b\leq q-2\\
\end{array}$} & {\scriptsize$\begin{array}{ll}
  q^3+aq+b-2g & a<b\\
  q^3+aq+a+1-2g & b\leq a
\end{array}$} & {\scriptsize$
  k\geq g-aq-b$}\\
\hline
\end{tabular}
\end{center}
\end{table}}}

\noindent In the remainder of this paper we focus on the first phase.
This case can be characterised by the condition $d\leq q$.

\break
\subsection{Corner codes and edge codes}\label{CCEC}
The first-phase Hermitian codes can be either \textit{edge codes} or \textit{corner codes}.
\begin{definition}\label{Cornercode}
Let $2\leq d\leq q$ and let $1\leq j\leq d-1$.\\
Let $L_0^d=\{1,x,\dots,x^{d-2}\},L_1^d=\{y,xy,\dots,x^{d-3}y\},\ldots,L_{d-2}^d=\{y^{d-2}\}$.\\
Let $l_1^d=x^{d-1},\ldots,l_j^d=x^{d-j}y^{j-1}$.
\begin{itemize}
  \item If $\Bmq=L_0^d\sqcup\dots\sqcup L_{d-2}^d$, then we say that $C(m,q)$ is a
  \textbf{corner code} and we denote it by $\textsf{H}^{\,0}_d$.
  \item If $\Bmq=L_0^d\sqcup\dots\sqcup L_{d-2}^d \sqcup\{l_1^d,\ldots,l_j^d\}$,
  then we say that $C(m,q)$ is an \textbf{edge code} and we denote it by $\textsf{H}^{\,j}_d$.
\end{itemize}
\end{definition}
From classical results in Table \ref{Tab} we have
\begin{theorem}
Let $2\leq d\leq q$, $1\leq j\leq d-1$. Then
$$d(\textsf{H}^{\,0}_d)=d(\textsf{H}^{\,j}_d)=d,\quad
\dim_{\FQ}(\textsf{H}^{\,0}_d)=n-\frac{d(d-1)}{2},\,\,
\dim_{\FQ}(\textsf{H}^{\,j}_d)=n-\frac{d(d-1)}{2}-j$$
\end{theorem}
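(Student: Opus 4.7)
The strategy is to recognise that every corner or edge code is the code $C(m,q)$ for an explicit value of $m$ lying in Phase~1, and then to read off the claimed parameters directly from Table~\ref{Tab}. So the proof reduces to two tasks: identify the right $m$, and verify that the combinatorial description of $\Bmq$ for that $m$ coincides with the basis prescribed in Definition~\ref{Cornercode}.

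For the corner code $\textsf{H}^{\,0}_d$ I would take $m_0 = (d-2)(q+1)$, i.e.\ $m_0 = aq+b$ with $a = b = d-2$. Since $d \leq q$, this value lies in $[0, q^2-2]$, so we are in Phase~1. To check that $\Bmq = L_0^d \sqcup \cdots \sqcup L_{d-2}^d$, fix $s$ with $0 \leq s \leq q-1$: the constraint $qr + (q+1)s \leq (d-2)(q+1)$ rewrites as $qr \leq (d-2-s)(q+1)$, and because $d-2 < q$ this forces $0 \leq r \leq d-2-s$ when $s \leq d-2$, and no admissible $r$ otherwise. These are exactly the monomials in $L_s^d$. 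Then Table~\ref{Tab} with $a = b = d-2$ gives $d(\textsf{H}^{\,0}_d) = a+2 = d$ and
\[
\dim_{\FQ}(\textsf{H}^{\,0}_d) = q^3 - \frac{a(a+1)}{2} - (b+1) = q^3 - \frac{(d-2)(d-1)}{2} - (d-1) = n - \frac{d(d-1)}{2}.
\]

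For the edge code $\textsf{H}^{\,j}_d$ I would take $m_j = (d-1)q + (j-1)$, i.e.\ $a = d-1$ and $b = j-1$. The hypothesis $1 \leq j \leq d-1 \leq q-1$ yields $b < a \leq q-1$ and $b \leq q-2$, so again we are in Phase~1. To verify $\Bmq = L_0^d \sqcup \cdots \sqcup L_{d-2}^d \sqcup \{l_1^d, \ldots, l_j^d\}$, fix the row index $s = i-1$: the bound becomes $qr \leq (d-i)q + (j-i)$. When $1 \leq i \leq j$ we have $0 \leq j-i < q$, so $r \leq d-i$, producing both the corner row $L_{i-1}^d$ and the extra monomial $l_i^d = x^{d-i}y^{i-1}$; when $i > j$ we have $j-i < 0$, hence $r \leq d-i-1$, recovering exactly $L_{i-1}^d$. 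Since $a > b$, Table~\ref{Tab} delivers $d(\textsf{H}^{\,j}_d) = a+1 = d$ and
\[
\dim_{\FQ}(\textsf{H}^{\,j}_d) = q^3 - \frac{(d-1)d}{2} - j = n - \frac{d(d-1)}{2} - j.
\]

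The only nontrivial point is the combinatorial checking in Step~2, where the contribution of each row $s = i-1$ must be split according to whether $i \leq j$ (adding the edge monomial) or $i > j$ (reproducing the corner pattern); everything else is an immediate substitution into the Phase~1 formulas already recorded in Table~\ref{Tab}.
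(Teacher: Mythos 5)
Your proposal is correct and follows exactly the route the paper intends: the paper's own ``proof'' is simply the citation ``From classical results in Table~\ref{Tab}'', and you have filled in the details by identifying $m_0=(d-2)(q+1)$ and $m_j=(d-1)q+(j-1)$, checking the Phase~1 constraints, and verifying that the resulting $\Bmq$ matches Definition~\ref{Cornercode}. The combinatorial verification and the substitutions into the table's formulas are all accurate.
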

\noindent In other words, all $\phi(x^ry^s)$ are linearly independent (i.e. $H$ has maximal rank)
and for any distance $d$ there are exactly $d$ Hermitian codes (one corner code and $d-1$ edge codes).
We can represent the above codes as in the following picture, where we consider the five smallest
non-trivial codes (for any $q\geq 3$).

\begin{minipage}[c]{.55\textwidth}
\centering
\begin{itemize}
  \item [$\mathbf{\textsf{H}^{\,0}_{2}}$] is a [$n,n-1,2$] code.\\
  $\Bmq=L_0^2=\{1\}$, so the parity-check matrix of $\textsf{H}^{\,0}_{2}$ is $(1,\dots,1)$.
  \item[$\mathbf{\textsf{H}^{\,1}_{2}}$] is a [$n,n-2,2$] code.\\
  $\Bmq=L_0^2\sqcup l_1^2=\{1,x\}$
  \item[$\mathbf{\textsf{H}^{\,0}_{3}}$] is a [$n,n-3,3$] code.\\
  $\Bmq=L_0^3\sqcup L_1^3 =\{1,x,y\}$
  \item[$\mathbf{\textsf{H}^{\,1}_{3}}$] is a [$n,n-4,3$] code.\\
  $\Bmq=L_0^3\sqcup L_1^3\sqcup l_1^3=\{1,x,y,x^2\}$
  \item[$\mathbf{\textsf{H}^{\,2}_{3}}$] is a [$n,n-5,3$] code.\\
  {\small{$\Bmq=L_0^3\sqcup L_1^3\sqcup\{l_1^3,l_2^3\}=\{1,x,y,x^2,xy\}$}}
\end{itemize}
\end{minipage}
\begin{minipage}[c]{.45\textwidth}
\centering
\includegraphics[width=5.5 cm]{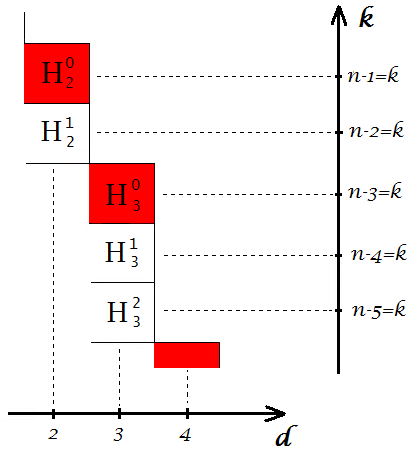}
\end{minipage}

\subsection{First results for the first phase}\label{parolemin}

Ideal $J_w$ of Proposition \ref{proptot} for $C(m,q)$ is
\begin{equation}\label{Jqm}
\begin{array}{ll}
  J_{w}=\Big\langle & \Big\{\sum_{i=1}^wz_ix_i^ry_i^s\Big\}_{x^ry^s\in\mathcal{B}_{m,q}},
  \left\{x_i^{q+1}-y_i^q-y_i\right\}_{i=1,\dots,w},\\
  & \left\{z_i^{q^2-1}-1\right\}_{i=1,\dots,w},\left\{x_i^{q^2}-x_i\right\}_{i=1,\dots,w},
  \left\{y_i^{q^2}-y_i\right\}_{i=1,\dots,w},\\
  & \left\{\prod_{1\leq i<j\leq w} ((x_i-x_j)^{q^2-1}-1) ((y_i-y_j)^{q^2-1}-1\right\}\Big\rangle.
\end{array}
\end{equation}

Let $w\geq v\geq 1$. Let
$Q=(\overline{x}_1,\dots,\overline{x}_w,\overline{y}_1,\dots,\overline{y}_w,\overline{z}_1,\dots,\overline{z}_w)\in\mathcal{V}(J_w)$.
We say that $Q$ is in \textbf{v-block position} if we can partition $\{1,\ldots,n\}$ in $v$ blocks
$I_1,\ldots,I_v$ such that
$$\overline{x}_i=\overline{x}_j\iff\exists\,1\leq h\leq v\textrm{ such that }i,j\in I_h.$$
W.l.o.g. we can assume $|I_1|\leq\dots\leq |I_v|$ and $I_1=\{1,\dots,u\}$.
It is simple to prove the following numerical lemma.

\begin{lemma}\label{lemma1}
We always have $u+v\leq w+1$. If $u\geq 2$ and $v\geq 2$, then
$v\leq\lfloor\frac{w}{2}\rfloor$ and $u+v\leq\lfloor\frac{w}{2}\rfloor+2$.
\end{lemma}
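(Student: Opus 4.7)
The plan is purely combinatorial: the lemma follows from counting the total size $w = \sum_{j=1}^v |I_j|$ under the ordering assumption $u = |I_1| \leq |I_2| \leq \cdots \leq |I_v|$. Every block is nonempty, so $|I_j| \geq 1$, and since $I_1$ is the smallest we have the stronger lower bound $|I_j| \geq u$ for every $j$. Both inequalities of the lemma are obtained by applying these two bounds appropriately.

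For the first claim $u + v \leq w + 1$, I would use the trivial bound $|I_j| \geq 1$ on the $v-1$ blocks other than $I_1$:
\[
w \;=\; |I_1| + \sum_{j=2}^{v} |I_j| \;\geq\; u + (v-1),
\]
which rearranges to $u + v \leq w + 1$. This needs no hypothesis on $u$ or $v$.

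For the second claim, assume $u \geq 2$ and $v \geq 2$. Using the sharper bound $|I_j| \geq u$ for every $j$ yields $w \geq uv$. Since $u \geq 2$ this immediately gives $w \geq 2v$, hence $v \leq w/2$, and because $v$ is an integer, $v \leq \lfloor w/2 \rfloor$. For the bound on $u+v$, the key algebraic observation is
\[
(u-2)(v-2) \;\geq\; 0 \quad \Longleftrightarrow \quad uv \;\geq\; 2u + 2v - 4,
\]
which holds precisely because $u, v \geq 2$. Combining this with $w \geq uv$ gives $w \geq 2(u+v-2)$, so $u + v \leq w/2 + 2$, and integrality of $u+v$ promotes this to $u + v \leq \lfloor w/2 \rfloor + 2$.

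There is no real obstacle here; the proof is just a careful bookkeeping of which lower bound on the block sizes to use. The only subtlety worth flagging is that the hypothesis $u \geq 2$ is what upgrades the trivial bound $|I_j| \geq 1$ to $|I_j| \geq 2$, and that $(u-2)(v-2) \geq 0$ is the clean way to package both hypotheses $u \geq 2$ and $v \geq 2$ into a single inequality relating $uv$ to $u+v$.
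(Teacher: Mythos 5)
Your proof is correct: the bound $w\geq u+(v-1)$ gives the first claim, and since $u=|I_1|$ is the smallest block size, $w\geq uv$ combined with $u,v\geq 2$ (via $(u-2)(v-2)\geq 0$) gives both parts of the second claim, with integrality supplying the floors. The paper offers no proof at all (it merely calls the lemma ``simple to prove''), and your argument is exactly the natural bookkeeping that fills this gap.
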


We need the following technical lemma \cite{CGC-cd-phdthesis-marco}.

\begin{lemma}\label{prop2}
Let us consider the edge code $\textsf{H}^{\,j}_d$ with $1\leq j\leq d-1$ and $3\leq d\leq w\leq 2d-3$.
Let $Q=(\overline{x}_1,\dots,\overline{x}_w,\overline{y}_1,\dots,\overline{y}_w,\overline{z}_1,
\dots,\overline{z}_w)$
be a solution of $J_{w}$ in $v$-block position, with $v\leq w$, then exactly one of the following cases holds:
\begin{itemize}
  \item[(a)] $u=1\qquad v>d\qquad w\geq d+1$
  \item[(b)] $v=1$, that is, $\bar{x}_1=\dots=\bar{x}_w$
\end{itemize}
If $d=2$ and $w=2$, then (a) holds for $H^1_2$.
\end{lemma}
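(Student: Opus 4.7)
The plan is to translate the parity-check conditions of $\textsf{H}^{\,j}_d$ into Vandermonde systems indexed by the block structure of $Q$, and then to combine a Vandermonde argument in the $x$-coordinates $\xi_h$ with a second one in the $y$-coordinates inside a single block.

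For every block $h=1,\dots,v$ let $\xi_h$ denote the common $x$-coordinate of the points with indices in $I_h$, and set $T_h^{(s)}=\sum_{i\in I_h}z_iy_i^s$. By Definition~\ref{Cornercode}, the basis $\Bmq$ consists of all $x^ry^s$ with $r+s\leq d-2$ together with the edge monomials $x^{d-k}y^{k-1}$ for $k=1,\dots,j$; hence for each $s\in\{0,\dots,d-2\}$ the admissible exponents $r$ fill a full range $\{0,\dots,R_s\}$ with $R_s=d-1-s$ when $s\leq j-1$ and $R_s=d-2-s$ when $s\geq j$. In every case $R_s+1\geq d-1-s$. Grouping the parity-check conditions of~\eqref{Jqm} by block yields, for each admissible $s$, the Vandermonde system
\[
\sum_{h=1}^v\xi_h^r\,T_h^{(s)}=0,\qquad 0\leq r\leq R_s,
\]
in the pairwise distinct nodes $\xi_1,\dots,\xi_v$.

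Suppose $v\geq 2$ (otherwise we are in case (b)) and, for contradiction, that also $u\geq 2$. Lemma~\ref{lemma1} gives $u+v\leq\lfloor w/2\rfloor+2$, and the hypothesis $w\leq 2d-3$ yields $\lfloor w/2\rfloor\leq d-2$, so $u+v\leq d$. Then for every $s$ with $0\leq s\leq u-1$ we have $v\leq d-1-s\leq R_s+1$, so the Vandermonde invertibility of the matrix $(\xi_h^r)$ forces $T_h^{(s)}=0$ for all $h$. Specializing to $h=1$ gives the $u$ identities $\sum_{i\in I_1}z_iy_i^s=0$ for $s=0,\dots,u-1$ in the $u$ unknowns $\{z_i\}_{i\in I_1}$; since the points of $I_1$ share the $x$-coordinate $\xi_1$ and are pairwise distinct, the $y_i$ are pairwise distinct too, so this second Vandermonde system forces $z_i=0$, contradicting $z_i\in\FQ^{*}$. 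Hence $u=1$.

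With $u=1$ we have $I_1=\{1\}$ and $Z_1:=T_1^{(0)}=z_1\neq 0$. If $v\leq d$, then the Vandermonde system $\sum_h\xi_h^rZ_h=0$ for $r=0,\dots,d-1$ (available because $1,x,\dots,x^{d-1}$ all lie in $\Bmq$ whenever $j\geq 1$) has rank $v$, forcing $Z_h=0$ for every $h$ and, in particular, $z_1=0$; a contradiction. Hence $v>d$, and $w\geq v$ yields $w\geq d+1$, which is case (a). The main obstacle is the sharp inequality $u+v\leq d$: it rests on Lemma~\ref{lemma1} together with the hypothesis $w\leq 2d-3$, and it is exactly the estimate that lets the first Vandermonde argument (in $\xi_h$) be pushed up to degree $u-1$ in $s$ and then be handed off to the second Vandermonde argument (in the $y_i$ of block $I_1$).
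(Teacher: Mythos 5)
Your proof is correct and follows essentially the same route as the paper's: the same block decomposition, the same use of Lemma~\ref{lemma1} together with $w\leq 2d-3$ to obtain $u+v\leq d$, and the same pair of nested Vandermonde arguments (first in the block abscissas $\xi_h$, then in the $y$-coordinates within $I_1$, and finally the pure-power-of-$x$ system to force $v>d$ when $u=1$). Your explicit bookkeeping of the exponent ranges $R_s$ is merely a slightly more careful justification that the required monomials actually lie in $\Bmq$, not a different argument.
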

\begin{proof}
We denote for all $1\leq h\leq v$
$$
X_h=\bar{x}_i\textrm{ if }i\,\in\,I_h,\quad
Z_h=\sum_{i\in I_h}\bar{z}_i,\quad
Y_{h,\delta}=\sum_{i\in I_h}\bar{y}_i^{\delta}\bar{z}_i\textrm{ with }1\leq\delta\leq u-1
$$
\begin{itemize}
  \item[(a)] $u=1$. We have to prove, by contradiction, that $v>d$.\\
Let $v\leq d$. Since $Q\in\mathcal{V}(J_w)$, then $L_0^w(Q)=l_1^w(Q)=0$, that is
\begin{equation}\label{puntoa}
0=\sum_{i=1}^w\bar{x}_i^r\bar{z}_i=\sum_{i\in I_h}X_h^r\bar{z}_i=\sum_{h=1}^v X_h^r Z_h\quad 0\leq r\leq d-1.
\end{equation}
We can consider only the first $v$ equations of (\ref{puntoa}), because $v\leq d$, so
\begin{equation}\label{sistX}
  \sum_{h=1}^v X_h^r Z_h=0\quad 0\leq r\leq v-1\iff\left(\begin{array}{ccc}
  1 & \dots & 1 \\
  X_1 & \dots & X_v\\
  \vdots & \dots & \vdots\\
  X_1^{v-1} & \dots & X_v^{v-1}
\end{array}\right)\left(\begin{array}{c}
  Z_1\\
  \vdots\\
  Z_v
\end{array}\right)=0
\end{equation}
The above matrix is a Vandermonde matrix, so it has maximal rank $v$. Therefore, the solution of
(\ref{sistX}) is $(Z_1,\dots,Z_v)=(0,\dots,0)$. Since $u=1$, then $Z_1=\overline{z}_1=0$,
which contradicts $\overline{z}_i\in\FQ\setminus\{0\}$. So if $v>d$ then $w\geq d+1$.\\
\item[(b)] $u\geq 2$. We suppose by contradiction that $v\geq 2$.\\
We consider Proposition \ref{proptot}. A subset of equations of condition (\ref{e1})
is the following system, where $0\leq r\leq v$
\begin{equation}\label{puntob}
  \left\{\begin{array}{l}
    \sum_{i=1}^w\bar{x}_i^r\bar{z}_i=0\\
    \sum_{i=1}^w\bar{x}_i^r\bar{y}_i\bar{z}_i=0\\
    \quad\vdots\\
    \sum_{i=1}^w \bar{x}_i^r{\bar{y}_i}^{u-1}\bar{z}_i=0 
  \end{array}
\right.\iff\left\{
  \begin{array}{l}
    \sum_{h=1}^v X_h^r Z_h=0\\
    \sum_{h=1}^v X_h^r Y_{h,1}=0\\
    \quad\vdots\\
    \sum_{h=1}^v X_h^r Y_{h,u-1}=0
  \end{array}
\right.
\end{equation}
In fact system (\ref{puntob}) is a subset of (\ref{e1}) if and only if $\deg(\bar{x}_i^{v}\bar{y}_i^{u-1})\leq d-1$
for any $i=1,\dots,w$. That is, $v+(u-1)\leq d-1\iff v+u\leq d$.\\ 
To verify it, since $v\geq 2$,
it is sufficient to apply Lemma \ref{lemma1} and we obtain
$u+v\leq\lfloor\frac{w}{2}\rfloor+2\leq\lfloor\frac{2d-3}{2}\rfloor+2=d$.\\
By system (\ref{puntob}) we obtain $u$ Vandermonde matrices (all having rank $v$).
Therefore, the solutions of these systems are zero-solutions.
So, in the particular case $h=1$, we have $Z_1=Y_{1,1}=\ldots=Y_{1,u-1}=0$, that is
$$\left\{\begin{array}{l}
  \sum_{i=0}^u\bar{z}_i=0\\
  \sum_{i=0}^u\bar{y}_i\bar{z}_i=0\\
  \quad\vdots\\
  \sum_{i=0}^u\bar{y}_i^{u-1}\bar{z}_i=0
\end{array}\right.\iff\left(\begin{array}{ccc}
  1 & \dots & 1\\
  \bar{y}_1 & \dots & \bar{y}_u\\
  \vdots & \dots & \vdots\\
  \bar{y}_1^{u-1} & \dots & \bar{y}_u^{u-1}
\end{array}\right)\left(\begin{array}{c}
  \bar{z}_1\\
  \vdots\\
  \bar{z}_u
\end{array}\right)=0$$
Since the $\bar{y}_i$'s are all distinct (because the $\bar{x}_i$'s are all equal),
we obtain a Vandermonde matrix, and so $\bar{z}_1=\dots=\bar{z}_u=0$, but it is impossible because
$\bar{z}_i\in \FQ\setminus\{0\}$. Therefore $v=1$.
\end{itemize}
The case $H^1_2$ is trivial.
\end{proof}


\subsection{Minimum-weight codewords}\label{minimum}

This is the first result relating intersection of lines with $\He$ and small-weight codewords presented in  \cite{CGC-cod-misc-salpell06}.
\begin{corollary}\label{cor1}
Let us consider the edge code $\textsf{H}^{\,j}_d$ with $1\leq j\leq d-1$.\\
If $Q=(\overline{x}_1,\dots,\overline{x}_d,\overline{y}_1,\dots,\overline{y}_d,\overline{z}_1,\dots,\overline{z}_d)\in\mathcal{V}(J_{d})$,
then $\bar{x}_1=\dots=\bar{x}_d$. In other words, the points that correspond to a minimum-weight word lie
in the intersection of the Hermitian curve $\He$ and a vertical line.\\
Whereas if $d\geq 4$ and
$Q=(\overline{x}_1,\dots,\overline{x}_{d+1},\overline{y}_1,\dots,\overline{y}_{d+1},\overline{z}_1,\dots,\overline{z}_{d+1})\in\mathcal{V}(J_{d+1})$,
then one of the following cases holds
\begin{itemize}
  \item[(a)] $\bar{x}_i\ne \bar{x}_{j}$ with $i\ne j$ for $1\leq i,j\leq d+1$
  \item[(b)] $\bar{x}_1=\dots=\bar{x}_{d+1}$.
\end{itemize}
\end{corollary}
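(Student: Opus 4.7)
The plan is to read the corollary off Lemma~\ref{prop2} directly: the weight-$d$ assertion needs essentially no extra work, and the weight-$(d{+}1)$ assertion requires only a short combinatorial tightening of the two mutually exclusive alternatives the lemma supplies.

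For the first assertion, I would apply Lemma~\ref{prop2} to a solution $Q\in\mathcal{V}(J_d)$, i.e.\ take $w=d$. The range hypothesis $d\leq w\leq 2d-3$ holds as soon as $d\geq 3$, and then case~(a) of Lemma~\ref{prop2} is immediately ruled out since it requires $w\geq d+1$. Thus case~(b) must hold, which by definition says $v=1$, i.e.\ $\bar{x}_1=\cdots=\bar{x}_d$. The exceptional case $d=2$ (code $\textsf{H}^{\,1}_2$) is either the boundary clause of Lemma~\ref{prop2} or, equivalently, a two-line direct check: the parity-check relations $\bar{z}_i+\bar{z}_j=0$ and $\bar{z}_i\bar{x}_i+\bar{z}_j\bar{x}_j=0$ give $\bar{z}_i(\bar{x}_i-\bar{x}_j)=0$, whence $\bar{x}_i=\bar{x}_j$ because $\bar{z}_i\neq 0$. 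Once all $\bar{x}_i$ agree, the points $(\bar{x}_i,\bar{y}_i)\in\He$ share the same $x$-coordinate, so they lie in $\He\cap\{x=\bar{x}_1\}$, which is the geometric conclusion.

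For the second assertion I would apply Lemma~\ref{prop2} again, this time with $w=d+1$; the constraint $w\leq 2d-3$ is precisely why the hypothesis $d\geq 4$ is imposed. The lemma leaves only two possibilities. In case~(b) we get $v=1$, which is alternative~(b) of the corollary. Otherwise case~(a) holds, so $u=1$ and $v>d$; combining $v>d$ with the trivial bound $v\leq w=d+1$ forces $v=d+1=w$, and since $u=|I_1|$ is the \emph{smallest} block cardinality and $u=1$, every block must have cardinality exactly one. Thus the $\bar{x}_i$'s are pairwise distinct, giving alternative~(a) of the corollary. No further possibility survives because Lemma~\ref{prop2} is exhaustive.

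The only real obstacle is bookkeeping around the range $d\leq w\leq 2d-3$: it fails for $d=2$ in the weight-$d$ case (hence the separate argument for $\textsf{H}^{\,1}_2$), and it is the reason the second assertion needs $d\geq 4$. Beyond verifying that the extremal values $u=1,\ v=w$ translate into the geometric dichotomy \emph{all $\bar{x}_i$ equal} versus \emph{all $\bar{x}_i$ pairwise distinct}, no further argument is required.
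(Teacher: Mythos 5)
Your proposal is correct and follows essentially the same route as the paper: the authors likewise prove the corollary by invoking Lemma~\ref{prop2} with $w=d$ (ruling out case~(a) and concluding $v=1$) and with $w=d+1$ (where case~(a) forces $v=d+1$, hence pairwise distinct $\bar{x}_i$'s, and case~(b) gives all $\bar{x}_i$'s equal). Your extra attention to the boundary constraint $w\leq 2d-3$ --- which explains both the separate treatment of $d=2$ and the hypothesis $d\geq 4$ in the second assertion --- is a welcome clarification that the paper leaves implicit.
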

\begin{proof}
We are in the hypotheses of Lemma \ref{prop2}. So if $w=d$ then $u\ne 1$. So $v=1$.
Whereas, if $w=d+1$ then there are two possibilities. In case $(a)$ of Lemma~\ref{prop2},
all the $\bar{x}_i$'s are different, since $v=d+1$, or, case $(b)$, $\bar{x}_1=\dots=\bar{x}_{d+1}$.
\end{proof}

Now we can prove the following theorem for edge codes.
\begin{theorem}\label{teo1}
The number of minimum weight words of an edge code $\textsf{H}^{\,j}_d$ is
$$A_d=q^2(q^2-1)\binom{q}{d}.$$
\end{theorem}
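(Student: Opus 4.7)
The plan is to count $|\mathcal{V}(J_d)|$ directly, then divide by $d!$ as prescribed by Proposition~\ref{proptot}. By Corollary~\ref{cor1}, every point $Q=(\overline{x}_1,\dots,\overline{x}_d,\overline{y}_1,\dots,\overline{y}_d,\overline{z}_1,\dots,\overline{z}_d)\in\mathcal{V}(J_d)$ satisfies $\overline{x}_1=\cdots=\overline{x}_d=t$ for some $t\in\FQ$, so the $d$ underlying points $P_i=(t,\overline{y}_i)$ lie on a single vertical line. Lemma~\ref{interette} says such a line meets $\He$ in exactly $q$ affine points, so there are $q^2$ choices for $t$ and then $q(q-1)\cdots(q-d+1)$ ordered choices of the tuple $(\overline{y}_1,\dots,\overline{y}_d)$ of distinct $y$-coordinates.

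Next I would analyze the defining relations of $J_d$ restricted to such a configuration. Every generator of $\Bmq$ used in $\textsf{H}^{\,j}_d$ has the form $x^r y^s$ with $s\leq d-2$; moreover, for each fixed $s\in\{0,\dots,d-2\}$ the element $y^s$ itself belongs to $\Bmq$. Substituting $\overline{x}_i=t$, equation~(\ref{e1}) reduces for each monomial to $t^r\sum_{i=1}^d \overline{z}_i \overline{y}_i^{\,s}=0$. In particular, the monomials $y^0,y^1,\dots,y^{d-2}$ give the system
\begin{equation*}
\sum_{i=1}^d \overline{z}_i \overline{y}_i^{\,s}=0,\qquad s=0,1,\dots,d-2,
\end{equation*}
whose matrix is a $(d-1)\times d$ Vandermonde block in the distinct $\overline{y}_i$'s. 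Every remaining generator $x^r y^s\in\Bmq$ (including all edge generators $l_k^d=x^{d-k}y^{k-1}$ with $1\leq k\leq j\leq d-1$, where $s=k-1\leq d-2$) is a $t^r$-multiple of one of these equations and is therefore automatically satisfied.

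The Vandermonde system has a kernel of dimension exactly $1$ over $\FQ$. Moreover, any solution with a zero entry, say $\overline{z}_{i_0}=0$, would reduce to a $(d-1)\times(d-1)$ Vandermonde system in the remaining $\overline{z}_i$'s whose unique solution is zero, contradicting $\overline{z}_i\in\FQ\setminus\{0\}$ from~(\ref{e3}). Hence every admissible solution is obtained by rescaling a fixed nonzero kernel vector by a scalar in $\FQ^*$, which gives exactly $q^2-1$ choices of $(\overline{z}_1,\dots,\overline{z}_d)$ per ordered support.

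Combining the three independent choices yields
\begin{equation*}
|\mathcal{V}(J_d)|=q^2\cdot q(q-1)\cdots(q-d+1)\cdot(q^2-1),
\end{equation*}
and Proposition~\ref{proptot} then gives $A_d=|\mathcal{V}(J_d)|/d!=q^2(q^2-1)\binom{q}{d}$, as claimed. The only non-routine step is the clean verification that \emph{no} parity-check equation associated with $\textsf{H}^{\,j}_d$ imposes an additional constraint once one is on a vertical line: this relies on the shape of $\Bmq$ (every exponent $s$ of $y$ is at most $d-2$) together with the factorization $t^r\sum_i \overline{z}_i\overline{y}_i^{\,s}$, and is what makes the edge and corner codes share the same minimum-weight count.
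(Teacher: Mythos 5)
Your proposal is correct and follows essentially the same route as the paper: reduce to a vertical line via Corollary~\ref{cor1}, count the $q^2$ choices of abscissa and the $\binom{q}{d}d!$ ordered $y$-tuples via Lemma~\ref{interette}, observe that all parity-check equations collapse to the $(d-1)\times d$ Vandermonde system in the $\overline{y}_i$'s, and conclude with a one-dimensional kernel giving $q^2-1$ choices of the $\overline{z}_i$'s before dividing by $d!$. Your explicit verification that the kernel vector has no zero entry (via the $(d-1)\times(d-1)$ Vandermonde subsystem) is a small point the paper leaves implicit, but it is the same argument.
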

\begin{proof}
By Proposition \ref{proptot} we know that $J_{d}$  represents all words of minimum weight.
The first set of ideal basis (\ref{Jqm}) has exactly $\frac{d(d-1)}{2}+j$ equations, where $1\leq j\leq d-1$.
So, if $j=1$, this set implies the following system:
{\small{\begin{equation}\label{sistC}
\left\{\begin{array}{l}
  \bar{z}_1+\dots+\bar{z}_d=0\\
  \bar{x}_1\bar{z}_1+\dots+\bar{x}_d\bar{z}_d=0\\
  \bar{y}_1\bar{z}_1+\dots+\bar{y}_d\bar{z}_d=0\\
  \bar{x}^2_1\bar{z}_1+\dots+\bar{x}^2_d\bar{z}_d=0\\
  \quad\vdots\\
  \bar{y}_1^{d-2}\bar{z}_1+\dots+\bar{y}_d^{d-2}\bar{z}_d=0\\
  \bar{x}_1^{d-1}\bar{z}_1+\dots+\bar{x}_d^{d-1}\bar{z}_d=0\\
\end{array}\right.
\end{equation}}}
Whereas, if $j>1$ then we have to add the first $j-1$ of following equations:
{\small{\begin{equation}\label{sist:j}
\left\{\begin{array}{l}
  \bar{x}_1^{d-2}\bar{y}_1\bar{z}_1+\dots+\bar{x}_d^{d-2}\bar{y}_d\bar{z}_d=0\\
  \quad\vdots\\
  \bar{x}_1 \bar{y}_1^{d-2}\bar{z}_1+\dots+\bar{x}_d\bar{y}_d^{d-2}\bar{z}_d=0\\
\end{array}\right.
\end{equation}}}
\noindent But $\bar{x}_1=\ldots=\bar{x}_d$, since we are in the hypotheses of Corollary~\ref{cor1}. So the system becomes
{\small{\begin{equation}\label{sistD}
\left\{\begin{array}{l}
  \bar{z}_1+\dots+\bar{z}_d=0\\
  \bar{y}_1\bar{z}_1+\dots+\bar{y}_d\bar{z}_d=0\\
  \quad\vdots\\
  \bar{y}_1^{d-2}\bar{z}_1+\dots+\bar{y}_d^{d-2}\bar{z}_d=0\\
\end{array}\right.
\end{equation}}}
We have $q^2$ choice for the $\bar{x}_i$'s and, by Lemma \ref{interette}, we have $\binom{q}{d}d!$
different $\bar{y}_i$'s, since for any choice of the $\bar{x}_i$'s there are exactly $q$ possible values
for the $\bar{y}_i$'s, but we need just $d$ of them and any permutation of these will be again a solution.
Now we have to calculate the solutions for the $\bar{z}_i$'s.\\
\noindent We write the system (\ref{sistD}) as a matrix, which is a Vandermonde matrix with rank $d-1$.
This means that the solution space has linear dimension $1$ because $1=d-(d-1)=$ number of variables $-$
rank of matrix. So the solutions are $(a_1\alpha,a_2\alpha,\dots,a_{d-1}\alpha)$ with $\alpha\in\,\FQ^*$,
where $a_j$ are fixed since they depend on $\bar{y}_i$. So the number of the $z$'s is
$|\FQ^*|=q^2-1$, then $A_d=\frac{1}{d!}\left(q^2(q^2-1)\binom{q}{d}d\,!\right).$
\end{proof}

We consider now corner codes. We have the following geometric characterisation.
\begin{proposition}\label{cornercode}
Let us consider the corner code $\textsf{H}^{\,0}_d$, then the points\\
{\small{$(\bar{x}_1,\bar{y}_1),\ldots,(\bar{x}_d,\bar{y}_d)$}}
corresponding to minimum-weight words lie on the same line.
\end{proposition}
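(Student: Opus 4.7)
My plan is to recast the statement as a non-surjectivity question for the evaluation map at the $d$ support points, and to rule out the non-collinear case by an explicit construction of separating polynomials of degree $d-2$ built from affine lines.

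\textbf{Setup.} Applying Proposition~\ref{proptot} with $w=d$ to $\textsf{H}^{\,0}_d$, whose basis is $\mathcal{B}_{m,q}=\{x^ry^s : r+s\le d-2\}$, a minimum-weight codeword corresponds to $d$ distinct points $P_i=(\bar x_i,\bar y_i)\in\He$ together with nonzero coefficients $\bar z_i\in\FQ^*$ satisfying
\[
\sum_{i=1}^d \bar z_i\,\bar x_i^{\,r}\bar y_i^{\,s}=0 \qquad\text{for every } r+s\le d-2.
\]
Denoting by $V_{d-2}$ the $\FQ$-space of polynomials of degree $\le d-2$, the nonzero vector $\bar z=(\bar z_1,\dots,\bar z_d)$ lies in the left kernel of the evaluation matrix of $V_{d-2}$ at $P_1,\dots,P_d$; equivalently, the evaluation map $\mathrm{ev}\colon V_{d-2}\to\FQ^d$, $f\mapsto(f(P_1),\dots,f(P_d))$, fails to be surjective.

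\textbf{Contrapositive and construction.} I argue that if the $d$ points are not all collinear then $\mathrm{ev}$ is surjective, a contradiction. It suffices to produce, for each $i$, a polynomial $f_i\in V_{d-2}$ with $f_i(P_i)\ne 0$ and $f_i(P_j)=0$ for every $j\ne i$, since then the images $\mathrm{ev}(f_i)$ are nonzero multiples of the standard basis of $\FQ^d$. Fix $i$; under non-collinearity the $d-1$ points $\{P_j\}_{j\ne i}$ cannot all lie on a single line through $P_i$, so there exist distinct indices $j,k\in\{1,\dots,d\}\setminus\{i\}$ such that the line $\ell_{jk}$ joining $P_j$ and $P_k$ avoids $P_i$. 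For each remaining index $\lambda\in\{1,\dots,d\}\setminus\{i,j,k\}$, choose any affine line $\ell_\lambda$ through $P_\lambda$ not passing through $P_i$, and take
\[
f_i=\ell_{jk}\prod_{\lambda\ne i,j,k}\ell_\lambda.
\]
Then $\deg f_i=1+(d-3)=d-2$, $f_i$ vanishes at every $P_j$ with $j\ne i$, and $f_i(P_i)\ne 0$, as required. The edge cases $d=2$ (vacuous, as any two points are collinear) and $d=3$ (where the product is empty and $f_i=\ell_{jk}$) are handled separately.

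\textbf{Main obstacle.} The crux is the combinatorial observation that non-collinearity of the $d$ points forces the existence of one line covering two of the remaining points while avoiding $P_i$; without this saving the product would have degree $d-1$ rather than $d-2$, and would not sit inside $V_{d-2}$. Note that this is precisely the step where corner codes differ from edge codes: Lemma~\ref{prop2} does not apply because the condition $\deg(x^v y^{u-1})\le d-1$ has to be replaced by $\le d-2$, which is why only any line (and not a vertical one, as in Corollary~\ref{cor1}) can be guaranteed here. Once the separating polynomials are in place, the rest is routine linear algebra: $\mathrm{ev}$ is surjective, contradicting $\bar z\ne 0$ in its left kernel, and collinearity follows.
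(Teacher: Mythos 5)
Your proof is correct, but it takes a genuinely different route from the paper's. The paper isolates from the defining system the pure-$x$ Vandermonde subsystem $\sum_i \bar z_i\bar x_i^{\,r}=0$ ($0\le r\le d-2$) and the analogous pure-$y$ one to conclude that the $\bar x_i$'s (resp.\ $\bar y_i$'s) are either all equal or all distinct; in the all-distinct case it applies a shear $x\mapsto x$, $y\mapsto y+ax$, which preserves the monomial set $\{x^ry^s:\, r+s\le d-2\}$ and hence the system, chosen so that two $y$-coordinates collide, and then invokes the same dichotomy to force all sheared $y$-coordinates to coincide, i.e.\ the points lie on a non-vertical line. You instead dualize: a nonzero $\bar z$ orthogonal to all evaluations of $V_{d-2}$ at $P_1,\dots,P_d$ exists iff the evaluation map $V_{d-2}\to\FQ^{\,d}$ is not surjective, and for a non-collinear configuration you restore surjectivity by exhibiting separating polynomials built from $d-2$ affine lines, the key saving being one line through two of the remaining points that misses $P_i$. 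This is the classical ``$d$ points impose independent conditions on forms of degree $d-2$ unless they are collinear'' argument; it is self-contained, avoids the shear trick, and sidesteps the block analysis of coordinate multiplicities, which the paper treats rather tersely (intermediate patterns such as two pairs of equal $\bar x_i$'s are not explicitly excluded by the pure-$x$ subsystem alone). What the paper's route buys is the explicit all-equal/all-distinct coordinate dichotomy, which it reuses in the counting argument of Theorem~\ref{teo2}. One small expository gap on your side: the passage from ``the $P_j$, $j\ne i$, are not all on a single line through $P_i$'' to ``some $\ell_{jk}$ avoids $P_i$'' deserves one more sentence (if every joining line passed through $P_i$, then any $P_k$ would be forced onto the line through $P_i$ and a fixed $P_j$, making all $d$ points collinear); the claim is true and the proof stands.
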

\begin{proof}
The minimum-weight words of a  corner code have to verify the first condition set of $J_w$, which has $\frac{d(d-1)}{2}$ equations. That is,
{\small{\begin{equation}\label{sistE}
\left\{\begin{array}{l}
  \bar{z}_1+\dots+\bar{z}_d=0\\
  \bar{x}_1\bar{z}_1+\dots+\bar{x}_d\bar{z}_d=0\\
  \bar{y}_1\bar{z}_1+\dots+\bar{y}_d\bar{z}_d=0\\
  \bar{x}^2_1\bar{z}_1+\dots+\bar{x}^2_d\bar{z}_d=0\\
  \quad\vdots\\
  \bar{y}_1^{d-2}\bar{z}_1+\dots+\bar{y}_d^{d-2}\bar{z}_d=0\\
\end{array}\right.
\end{equation}}}
\noindent This system is the same as (\ref{sistC}), but with a missing equation. This means that
(\ref{sistE}) have all solutions of system (\ref{sistC}) and other solutions.\\
If we consider a subset of (\ref{sistE}):
{\small{\begin{equation}\label{sistF}
\left\{\begin{array}{l}
  \bar{z}_1+\dots+\bar{z}_d=0\\
  \bar{x}_1\bar{z}_1+\dots+\bar{x}_d\bar{z}_d=0\\
  \bar{x}^2_1\bar{z}_1+\dots+\bar{x}^2_d\bar{z}_d=0\\
  \quad\vdots\\
  \bar{x}_1^{d-2}\bar{z}_1+\dots+\bar{x}_d^{d-2}\bar{z}_d=0\\
\end{array}\right.
\end{equation}}}
\noindent we note that the $\bar{z}_i$'s are all non-zero if all $\bar{x}_i$'s are distinct
(or all are equal). Therefore, we have only two possibilities for the $\bar{x}_i$'s: either
are all different or they coincide. The same consideration is true for the $\bar{y}_i$'s,
in fact when we consider (\ref{sistE}) and we exchange $x$ with $y$, we obtain again (\ref{sistE}).\\
So we have an alternative:
\begin{itemize}
  \item The $\bar{x}_i$'s are all equal or the $\bar{y}_i$'s are all equal, so our proposition is true.
  \item The $\bar{x}_i$'s and the $\bar{y}_i$'s are all distinct. We prove that  lye in the
intersection of a non-horizontal line.\\
Let $y=\beta x+\lambda$ be a non-vertical line passing for two points in a minimum weight
configuration. We can do an affine transformation of this type:
$$\left\{\begin{array}{l}
  x=x'\\
  y=y'+ax'\quad a\in\FQ
\end{array}\right.$$
such that at least two of the $y'$'s are equal and not all $y$'s are coincident.
Substituting the above transformation in (\ref{sistE}) and applying some operations between the equations,
we obtain a system that is equivalent to (\ref{sistE}). But this new system have all $y'$'s equal
(or all distinct), so the $y'$'s have to be all equal. Hence we can conclude that the points lie on the same line.
\end{itemize}
\end{proof}

We finally prove the following theorem:
\begin{theorem}\label{teo2}
The number of words having weight $d$ of a corner code $\textsf{H}^{\,0}_d$ is
$$A_d=q^2(q^2-1)\binom{q}{d-1}\frac{q^3-d+1}{d}.$$
\end{theorem}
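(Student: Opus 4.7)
The plan is to count the elements of $\mathcal{V}(J_d)$ and divide by $d!$, as prescribed by Proposition~\ref{proptot}. Proposition~\ref{cornercode} already tells us that for any $Q\in\mathcal{V}(J_d)$ the associated points $(\bar x_i,\bar y_i)$ are collinear on $\He$; conversely, every unordered $d$-subset of $\He$ lying on a line, together with any valid $(\bar z_i)\in(\FQ^*)^d$ satisfying system~(\ref{sistE}), yields $d!$ ordered solutions in $\mathcal{V}(J_d)$. So if $N_d$ denotes the number of unordered $d$-subsets of collinear points on $\He$ and $\nu$ the number of valid $z$-tuples attached to a fixed such subset, then $A_d = N_d\cdot \nu$.

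To compute $N_d$ I would combine our earlier line-intersection lemmas. Lemma~\ref{interette} gives $q^2$ vertical lines, each meeting $\He$ in exactly $q$ points, contributing $q^2\binom{q}{d}$ subsets. Lemma~\ref{intertot} gives $q^3(q-1)$ non-tangent non-vertical lines, each meeting $\He$ in $q+1$ points, contributing $q^3(q-1)\binom{q+1}{d}$ subsets, while the $q^3$ tangent lines contribute $0$ since $d\ge 2$. Hence
$$
N_d \;=\; q^2\binom{q}{d}+q^3(q-1)\binom{q+1}{d}.
$$

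To compute $\nu$, fix a collinear $d$-subset. In the non-vertical case $y=\beta x+\lambda$, substituting $\bar y_i=\beta\bar x_i+\lambda$ in system~(\ref{sistE}) turns every monomial $x^ry^s$ with $r+s\le d-2$ into a polynomial in $\bar x_i$ of degree at most $d-2$. The system therefore reduces to the truncated Vandermonde system $\sum_{i=1}^d\bar x_i^{\,k}\bar z_i=0$ for $k=0,\ldots,d-2$. Since the $\bar x_i$ are pairwise distinct, this $(d-1)\times d$ system has rank $d-1$ and a one-dimensional kernel spanned (by Lagrange interpolation) by $\bar z_i=\prod_{j\ne i}(\bar x_i-\bar x_j)^{-1}$, whose coordinates are all non-zero. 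The vertical-line case is treated identically after replacing $\bar x_i$ by $\bar y_i$, as in the proof of Theorem~\ref{teo1}. In both cases the non-zero multiples of the kernel generator give $\nu=q^2-1$.

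It remains to verify the algebraic identity $N_d(q^2-1)=q^2(q^2-1)\binom{q}{d-1}(q^3-d+1)/d$. After cancelling $q^2-1$ and applying $\binom{q+1}{d}=\binom{q}{d}+\binom{q}{d-1}$ together with $\binom{q}{d}=\binom{q}{d-1}(q-d+1)/d$, this reduces to
$$
(q-d+1)(q^2-q+1)+dq(q-1) \;=\; q^3-d+1,
$$
which is immediate by expansion. The only delicate step in the whole argument is the non-vanishing of the kernel coordinates, without which the corresponding codeword could have weight strictly less than $d$; but this is guaranteed by the Lagrange formula (and, a posteriori, forced by the fact that $d$ is the minimum distance).
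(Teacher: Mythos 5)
Your proof is correct and follows essentially the same route as the paper: collinearity of the points via Proposition~\ref{cornercode}, the separate vertical/non-vertical line counts from Lemmas~\ref{interette} and~\ref{intertot}, and the reduction of system~(\ref{sistE}) on a line to a rank-$(d-1)$ Vandermonde system yielding $q^2-1$ choices for the $\bar z_i$'s. Your explicit Lagrange generator $\bar z_i=\prod_{j\ne i}(\bar x_i-\bar x_j)^{-1}$ is a nice touch, since it makes transparent the non-vanishing of all kernel coordinates, a point the paper's proof leaves implicit.
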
 
\begin{proof}
Again, the points corresponding to minimum-weight words of a corner code have to verify (\ref{sistE}). By above
proposition, we know that these points lie in the intersections of any line and the Hermitian curve $\He$.\\

Let $Q=(\overline{x}_1,\dots,\overline{x}_d,\overline{y}_1,
\dots,\overline{y}_d,\overline{z}_1,\dots,\overline{z}_d)\in\mathcal{V}(J_{d})$
such that $\overline{x}_1=\ldots=\overline{x}_d$, that is, the points $(\bar{x}_i,\bar{y}_i)$ lie on a vertical line.
We know that the number of such $Q$'s is 
$$
q^2(q^2-1)\binom{q}{d}d\,!\;.
$$ 
Now we have to calculate the number
of solutions $Q\in\mathcal{V}(J_{d})$ such that $(\bar{x}_i,\bar{y}_i)$ lie on a non-vertical line.\\
\noindent By Lemma \ref{intertot} we know that the number of the $\bar{y}_i$'s and $\bar{x}_i$'s is
$(q^4-q^3)\binom{q+1}{d}d\,!$, since for any choice of the $\bar{y}_i$'s there are exactly $q+1$
possible values for the $\bar{x}_i$'s, but we need just $d$ of this (and the system is invariant).
As regards the number of the $\bar{z}_i$'s, we have to calculate the number of solutions of system (\ref{sistE}).\\
We apply an affine transformation to the system (\ref{sistE}) to obtain a horizontal line, that is, to have all the $\bar{x}_i$'s different and all the $\bar{y}_i$'s are equal, so we obtain a system equivalent to system (\ref{sistD}). Therefore we have a Vandermonde matrix, hence the number of the
$\bar{z}_i$'s is $q^2-1$. So
{\small{$$\begin{array}{rl}
  A_d= & \frac{1}{d!}\left(q^2(q^2-1)\binom{q}{d}d\,!+(q^4-q^3)(q^2-1)\binom{q+1}{d}d\,!\right)\\
  = & q^2(q^2-1)\binom{q}{d-1}\frac{q^3-d+1}{d}.
\end{array}$$}}
\end{proof}

\subsection{Second-weight codewords}\label{secondweight}
In this section we state more theorems for edge and corner codes previously consider in \cite{CGC-alg-tesi3-marcolla08}.
We study the case when the $x_i$'s coincide or when the $y_i$'s coincide.

\begin{theorem}\label{teo:d+1}
The number of words of weight $d+1$ with $y_1=\ldots=y_{d+1}$ of a corner code $\textsf{H}^{\,0}_d$ is:
$$(q^2-q)(q^4-(d+1)q^2+d)\binom{q+1}{d+1}.$$
Whereas for an edge code $\textsf{H}^{\,j}_d$ with $1\leq j\leq d-1$ is:
$$(q^2-q)\binom{q+1}{d+1}.$$
\end{theorem}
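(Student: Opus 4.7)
The plan is to apply Proposition~\ref{proptot} with $w=d+1$ after imposing the extra constraint $\bar y_1=\cdots=\bar y_{d+1}$, count the resulting solutions of $J_{d+1}$, and then divide by $(d+1)!$.

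First I will identify the admissible common value $\bar y$. Since the supporting points $(\bar x_i,\bar y)$ must be $d+1$ distinct points of $\He$ on the horizontal line $y=\bar y$, Corollary~\ref{interettey} forces $\Tra(\bar y)\ne 0$, giving $q^2-q$ admissible values, each carrying $q+1$ affine points of $\He$; choosing $d+1$ of them contributes a factor $\binom{q+1}{d+1}(d+1)!$ to the ordered count.

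Next, with $\bar y\ne 0$ fixed, every equation of type (\ref{e1}) in $J_{d+1}$ of the form $\bar y^{\,s}\sum_i \bar x_i^{\,r}\bar z_i=0$ reduces to the pure Vandermonde constraint $\sum_i \bar x_i^{\,r}\bar z_i=0$. Inspection of $\Bmq$ shows that for the corner code $\textsf{H}^{\,0}_d$ the active constraints are $\sum_i \bar x_i^{\,r}\bar z_i=0$ for $r=0,\dots,d-2$, a $(d-1)\times(d+1)$ Vandermonde system with $2$-dimensional kernel; for the edge code $\textsf{H}^{\,j}_d$ the extra basis monomials $x^{d-k}y^{k-1}$ with $k=1,\dots,j$ reduce to $\sum_i \bar x_i^{\,d-k}\bar z_i=0$, and only $k=1$ produces a genuinely new constraint, so the single equation $r=d-1$ is added and the resulting $d\times(d+1)$ Vandermonde has $1$-dimensional kernel.

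The main obstacle will be counting admissible $\bar z\in(\FQ^{*})^{d+1}$ in these kernels, since any coordinate being zero must be excluded. I will recognize the kernels as MDS codes with parameters $[d+1,2,d]$ and $[d+1,1,d+1]$ respectively. In the edge-code case every nonzero kernel vector automatically has full support, immediately giving $q^2-1$ admissible $\bar z$'s. In the corner-code case I will use inclusion--exclusion: vectors vanishing at a prescribed coordinate form a $1$-dimensional subspace (erasing that column leaves a square nonsingular Vandermonde), and any two such subspaces meet only at the origin, since a joint zero would force a kernel vector of weight $<d$, contradicting the MDS distance. This yields $q^4-1-(d+1)(q^2-1)=(q^2-1)(q^2-d)=q^4-(d+1)q^2+d$ full-support vectors.

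Finally, multiplying the three factors (choice of $\bar y$, ordered $\bar x$-tuple, admissible $\bar z$) and dividing by the $(d+1)!$ permutation symmetry of $J_{d+1}$ will yield the claimed enumeration. The delicate step is the MDS argument for the corner code; the edge-code case collapses to an elementary Vandermonde rank statement.
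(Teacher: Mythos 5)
Your proposal follows essentially the same route as the paper: fix the common value $\bar y$ with $\Tra(\bar y)\ne 0$ (giving the factor $q^2-q$ and, via Corollary~\ref{interettey}, the factor $\binom{q+1}{d+1}(d+1)!$ for the ordered $\bar x$-tuple), reduce the conditions (\ref{e1}) to a Vandermonde system in the $\bar z_i$'s of rank $d-1$ (corner) or $d$ (edge), count full-support kernel vectors, and divide by $(d+1)!$. The only genuine difference is how the full-support vectors of the two-dimensional kernel are counted in the corner case: the paper runs an explicit Gaussian elimination and tallies the solutions with some $\bar z_i=0$ equation by equation, whereas you observe that the kernel is a $[d+1,2,d]$ MDS code, so the $d+1$ one-dimensional coordinate-vanishing subspaces pairwise meet in $\{0\}$, and inclusion--exclusion gives $(q^4-1)-(d+1)(q^2-1)=q^4-(d+1)q^2+d$ admissible vectors. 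Your version is cleaner and less error-prone (the paper's tally contains slips such as ``$(d+1)q-d$'' where $(d+1)q^2-d$ is meant), and the two counts agree.

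One caveat: for the edge code your three factors multiply, after dividing by $(d+1)!$, to $(q^2-q)(q^2-1)\binom{q+1}{d+1}$, not to the stated $(q^2-q)\binom{q+1}{d+1}$, yet you assert without checking that the product ``yields the claimed enumeration''. The extra factor $q^2-1$ is genuinely there --- a one-dimensional kernel contributes $q^2-1$ admissible $\bar z$'s, exactly as in the analogous count of Theorem~\ref{teo3:d+1} --- and the paper's own proof likewise concludes with $(q^2-1)(q^2-q)\binom{q+1}{d+1}$; the displayed statement of Theorem~\ref{teo:d+1} appears to have dropped that factor. So your computation is correct, but you should flag the mismatch with the statement as printed rather than claim agreement with it.
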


\begin{proof}
We have $q^2$ choice for the $\bar{y}_i$'s and, by Corollary \ref{interettey}, we have\\ $\binom{q+1}{d+1}(d+1)!$
different $\bar{x}_i$'s, since for any choice of the $\bar{y}_i$'s there are exactly $q+1$ possible values
for the $\bar{x}_i$'s, but we need just $(d+1)$ of them and any permutation of these will be again a solution.\\
Now we have to calculate the solutions for the $\bar{z}_i$'s, in the two distinct cases.\\
\begin{itemize}
  \item[$*$]\textbf{Case} $\textsf{H}^{\,0}_d$. By Proposition \ref{proptot} we know that $J_{d}$ represents all words of minimum weight. The first set of ideal basis (\ref{Jqm}) has exactly $\frac{d(d-1)}{2}$ equations, which is system (\ref{sistE}) with more variables, that is, instead of
$\bar{x}_{d}$, $\bar{y}_{d}$ and $\bar{z}_{d}$, we have, respectively, $\bar{x}_{d+1}$, $\bar{y}_{d+1}$
and $\bar{z}_{d+1}$. Since $\bar y_1=\ldots=\bar y_{d+1}$, the said variation of system (\ref{sistE}) is
\begin{equation}\label{cornerrid}
\left\{\begin{array}{l}
  \bar{z}_1+\dots+\bar{z}_{d+1}=0\\
  \bar{x}_1\bar{z}_1+\dots+\bar{x}_{d+1}\bar{z}_{d+1}=0\\
  \bar{x}^2_1\bar{z}_1+\dots+\bar{x}^2_{d+1}\bar{z}_{d+1}=0\\
  \quad\vdots\\
  \bar{x}_1^{d-2}\bar{z}_1+\dots+\bar{x}_{d+1}^{d-2}\bar{z}_{d+1}=0\\
\end{array}\right.
\end{equation}
We can note that, if we write the system (\ref{cornerrid}) as a matrix adding these two equations
$x_1^{d-1}+\ldots+x_{d+1}^{d-1}=0$ and $x_1^{d}+\ldots+x_{d+1}^{d}=0$ we obtain a Vandermonde matrix.
So all rows of (\ref{cornerrid}) are linearly independent. This means that the solution space
has linear dimension $2$ because $2=(d+1)-(d-1)=$ number of variables $-$ rank of matrix.
So the number of the $z$'s is
$$
q^2(\mbox{ for }z_{d+1})\cdot q^2(\mbox{ for }z_{d})-\#(z_i=0\textrm{ for at least an }i).
$$
We want to calculate the number of $z_i=0$ for at least one $i$.\\
Since the matrix  $\mathbf{H}$ has maximum rank, we can apply the Gauss elimination to the system (\ref{cornerrid})
\begin{equation}\label{sisth}
\left\{\begin{array}{l}
  \bar{z}_1+ \dots + \bar{z}_{d+1}=0\\
  h_{2,2}\bar z_2+\ldots+ h_{2,d}\bar z_d + h_{2,d+1}+\bar z_{d+1}=0\\
  \vdots\\
  h_{d-1,d-2}\bar z_{d-2}+h_{d-1,d-1}\bar z_{d-1}+ h_{d-1,d}\bar z_d+  h_{d-1,d+1}\bar z_{d+1}=0 \\
  h_{d-1,d-1}\bar z_{d-1}+ h_{d-1,d}\bar z_d+  h_{d-1,d+1}\bar z_{d+1}=0 
\end{array}\right.
\end{equation}
If we solve the system (\ref{sisth}) we obtain
\begin{equation}\label{z234h}
  h_{d-1,d-1}\bar z_{d-1}+h_{d-1,d}\bar z_d+h_{d-1,d+1}\bar z_{d+1}=0
\end{equation}
First of all we consider the case $\bar z_{d-1}=0$, that is
\begin{equation}\label{z20h}
  h_{d-1,d}\bar z_d+h_{d-1,d+1}\bar z_{d+1}=0\iff\bar z_d=-\frac{h_{d-1,d+1}}{h_{d-1,d}}\bar z_{d+1}
\end{equation}
The equation (\ref{z20h}) in the variable $\bar z_{d+1}\in\FQ$ has exactly $q^2$ solutions.
In particular, we have the pair $(\bar z_d,\bar z_{d+1})=(0,0)$ and other $q^2-1$ ways to choose
the variable $\bar z_{d+1}$.\\ 
We have similar condition when $\bar z_d=0$ and $\bar z_{d+1}=0$. As before we have the pairs
$(\bar z_{d-1},\bar z_{d+1})=(0,0)$ and $(\bar z_{d-1},\bar z_d)=(0,0)$ and other $q^2-1$ ways
to choose $\bar z_{d-1}$ and $q^2-1$ ways to choose $\bar z_d$.\\
So the equation (\ref{z234h}) has exactly
$3(q^2-1)+|\{(\bar z_{d-1},\bar z_d,\bar z_{d+1})=(0,0,0)\}|=3q^2-2$ solutions.\\
Now we consider the second last line of the system (\ref{sisth}):
$h_{d-1,d-2}\bar z_{d-2}+h_{d-1,d-1}\bar z_{d-1}+h_{d-1,d}\bar z_d+h_{d-1,d+1}\bar z_{d+1}=0$, that is,
\begin{equation}\label{z1234h}
  \bar z_{d-2}=-(k_{d-1}\bar z_{d-1}+k_{d}\bar z_d+k_{d+1}\bar z_{d+1})
\end{equation}
First of all we have to study the case $\bar z_{d-2}=0$. We just studied the case in which all variables
$\bar z_{d-1}=\bar z_d=\bar z_{d+1}=0$, so we have to study the case when all variables
are different from zero, that is,
$\frac{k_{d-1}}{k_{d+1}}\bar z_{d-1}+\frac{k_{d}}{k_{d+1}}\bar z_d=-\bar z_{d+1}=k\in\FQ^*$.
So the equation (\ref{z1234h}) has exactly $(q^2-1)$ solutions.\\
We repeat the argument for each of system's equations (\ref{sisth}), that are $(d-2)$,
if we do not count the last equation. Therefore
$$\#(\bar z_i=0\textrm{ for at least one }i)=3q^2-2+(d-2)(q^2-1)=(d+1)q-d$$
So the system (\ref{sisth}) has exactly $q^4-(d+1)q+d$ solutions, then the number of words of weight $d+1$ with $y_1=\ldots=y_{d+1}$ of $\textsf{H}^{\,0}_d$ is:
$$
(q^2-q)(q^4-(d+1)q^2+d)\binom{q+1}{d+1}.
$$
  \item[$*$]\textbf{Case} $\textsf{H}^{\,j}_d$.
In this case the first set of ideal basis (\ref{Jqm}) contains exactly $\frac{d(d-1)}{2}+j$ equations, where $1\leq j\leq d-1$.
So, if $j=1$, this set implies the system (\ref{sistC}) with more variables, that is, instead of $\bar{x}_{d}$, $\bar{y}_{d}$ and $\bar{z}_{d}$, we have, respectively, $\bar{x}_{d+1}$, $\bar{y}_{d+1}$ and $\bar{z}_{d+1}$.
Whereas, if $j>1$ then we have to add the first $j-1$ of equations (\ref{sist:j}) with more variables.\\
Since $\bar y_1=\ldots=\bar y_{d+1}$, the system becomes
{\small{\begin{equation}
\left\{\begin{array}{l}
  \bar{z}_1+\dots+\bar{z}_{d+1}=0\\
  \bar{x}_1\bar{z}_1+\dots+\bar{x}_{d+1}\bar{z}_{d+1}=0\\
  \bar{x}^2_1\bar{z}_1+\dots+\bar{x}^2_{d+1}\bar{z}_{d+1}=0\\
  \quad\vdots\\
  \bar{x}_1^{d-1}\bar{z}_1+\dots+\bar{x}_{d+1}^{d-1}\bar{z}_{d+1}=0\\
\end{array}\right.
\end{equation}}}
\noindent This means that the solution space has linear dimension $d-(d-1)=1$.
So the number of the $z$'s is $|\FQ^*|=q^2-1$, then the number of words of weight $d+1$ with $y_1=\ldots=y_{d+1}$ of $\textsf{H}^{\,j}_d$ is:
$$(q^2-1)(q^2-q)\binom{q+1}{d+1}.$$
\end{itemize}
\end{proof}

\begin{theorem}
The number of words of weight $d+1$ with $x_1=\ldots=x_{d+1}$ of a corner code $\textsf{H}^{\,0}_d$
and of an edge code $\textsf{H}^{\,j}_d$ is:
$$q^2(q^4-(d+1)q^2+d)\binom{q}{d+1}.$$
\end{theorem}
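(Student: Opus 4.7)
The plan is to mirror the strategy of Theorem \ref{teo:d+1}, this time with the roles of the $\bar{x}_i$ and $\bar{y}_i$ reversed: I count the solutions $Q \in \mathcal{V}(J_{d+1})$ of (\ref{Jqm}) satisfying $\bar{x}_1 = \ldots = \bar{x}_{d+1}$. By Lemma \ref{interette}, every vertical line meets $\He$ in exactly $q$ affine points, so the common value $\bar{x}$ ranges over the $q^2$ elements of $\FQ$, and for each such $\bar{x}$ the ordered tuple $(\bar{y}_1, \ldots, \bar{y}_{d+1})$ of distinct roots of $y^q + y = \bar{x}^{q+1}$ can be chosen in $\binom{q}{d+1}(d+1)!$ ways.

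The key step is to analyze the linear system in the $\bar{z}_i$. Under the substitution $\bar{x}_i = \bar{x}$, every defining equation $\sum_i \bar{x}_i^r \bar{y}_i^s \bar{z}_i = 0$ collapses to $\bar{x}^r(\sum_i \bar{y}_i^s \bar{z}_i) = 0$. I would check that both the corner basis $\{x^r y^s : r+s \leq d-2\}$ and the edge basis (which adds the monomials $x^{d-k}y^{k-1}$ for $1 \leq k \leq j \leq d-1$) yield the same effective system
\[
\sum_i \bar{y}_i^s \bar{z}_i = 0, \qquad s = 0, 1, \ldots, d-2.
\]
For the corner part this is immediate; for each extra edge monomial $x^{d-k}y^{k-1}$, the relation $\bar{x}^{d-k}(\sum_i \bar{y}_i^{k-1}\bar{z}_i) = 0$ is either redundant with the $s = k-1 \leq d-2$ equation when $\bar{x} \neq 0$, or holds trivially because $\bar{x}^{d-k} = 0$ when $\bar{x} = 0$ (using $d-k \geq 1$). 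This is precisely what forces corner and edge codes to share the same weight-$(d+1)$ count.

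This effective system is a $(d-1) \times (d+1)$ Vandermonde matrix in the distinct nodes $\bar{y}_i$, hence has rank $d-1$, so its kernel $V \subset \FQ^{d+1}$ is $2$-dimensional with $|V| = q^4$. To count $v \in V$ with all coordinates nonzero, I would first show that any nonzero $v \in V$ has at most one zero coordinate: if $v_i = v_j = 0$ with $i \neq j$, the remaining entries satisfy a square $(d-1) \times (d-1)$ Vandermonde system on the remaining distinct $\bar{y}_k$, which is invertible, forcing $v = 0$. Consequently the sets $V_i = \{v \in V : v_i = 0\}$ each have dimension $1$ (equivalently, cardinality $q^2$) and pairwise meet only at the origin. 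Partitioning the nonzero members of $\bigcup_i V_i$ by the unique vanishing coordinate gives
\[
|V_1 \cup \cdots \cup V_{d+1}| \;=\; (d+1)(q^2 - 1) + 1 \;=\; (d+1)q^2 - d,
\]
so the number of admissible $\bar{z}$-tuples is $q^4 - (d+1)q^2 + d$.

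Combining the three factors and dividing by $(d+1)!$ as in Proposition \ref{proptot} yields the claimed count $q^2(q^4 - (d+1)q^2 + d)\binom{q}{d+1}$. The main obstacle I anticipate is the edge-code reduction, namely verifying that none of the added monomials $x^{d-k}y^{k-1}$ imposes a genuinely new constraint on the $\bar{z}_i$ under the hypothesis $\bar{x}_1 = \ldots = \bar{x}_{d+1}$: this is what makes the corner and edge formulas coincide here, in contrast to Theorem \ref{teo:d+1}, where the analogous reduction fails for edge codes (the monomial $x^{d-1}$ does contribute a genuinely new equation when $\bar{y}$ is the common value) and produces the smaller factor $q^2 - 1$.
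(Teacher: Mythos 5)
Your proposal is correct and follows essentially the same route as the paper: the same factorization into $q^2$ choices of the vertical line, $\binom{q}{d+1}(d+1)!$ ordered point tuples, and $q^4-(d+1)q^2+d$ admissible $\bar z$-vectors, with the final division by $(d+1)!$. Your treatment of the two sub-steps the paper leaves implicit --- the redundancy of the edge monomials $x^{d-k}y^{k-1}$ under $\bar x_1=\dots=\bar x_{d+1}$, and the count of all-nonzero kernel vectors via the one-dimensional, pairwise trivially intersecting subspaces $V_i$ --- is a cleaner packaging of the Gaussian-elimination bookkeeping the paper borrows from its Theorem~\ref{teo:d+1}.
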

\begin{proof}
By Proposition \ref{proptot} we know that $J_{d}$  represents all words of minimum weight.
For an edge code the first set of ideal basis (\ref{Jqm}) implies, if $j=1$ the system (\ref{sistC})
with more variables\footnote{instead of $\bar{x}_{d}$, $\bar{y}_{d}$ and $\bar{z}_{d}$, we have,
respectively, $\bar{x}_{d+1}$, $\bar{y}_{d+1}$ and $\bar{z}_{d+1}$. This is true every time that we write
with more variables} and if $j>1$ we have to add the first $j-1$ of equations (\ref{sist:j})
with more variables. Whereas, for a corner code, the first set of ideal basis (\ref{Jqm})
implies the system (\ref{sistE}) with more variables.
\noindent But $\bar{x}_1=\ldots =\bar{x}_{d+1}$, so the systems becomes
{\small{\begin{equation}
\left\{\begin{array}{l}
  \bar{z}_1+\dots+\bar{z}_{d+1}=0\\
  \bar{y}_1\bar{z}_1+\dots+\bar{y}_{d+1}\bar{z}_{d+1}=0\\
  \quad\vdots\\
  \bar{y}_1^{d-2}\bar{z}_1+\dots+\bar{y}_{d+1}^{d-2}\bar{z}_{d+1}=0\\
\end{array}\right.
\end{equation}}}
We have $q^2$ choice for the $\bar{x}_i$'s and, by Lemma \ref{interette}, we have $\binom{q}{d+1}(d+1)!$ different
$\bar{y}_i$'s, since for any choice of the $\bar{x}_i$'s there are exactly $q$ possible values for the
$\bar{y}_i$'s, but we need just $d+1$ of them and any permutation of these will be again a solution.
And we have $(q^4-(d+1)q^2+d)$ possible $\bar{z}_i$'s which is exactly the situation met in Theorem~\ref{teo:d+1}.\\
\end{proof}
 
\begin{theorem}\label{teo2:d+1}
The number of words of weight $d+1$ of a corner code $\textsf{H}^{\,0}_d$ with $(x_i,y_i)$ lying on a non-vertical line is:
$$(q^4-q^3)(q^4-(d+1)q^2+d)\binom{q+1}{d+1}.$$
\end{theorem}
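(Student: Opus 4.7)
The plan is to mirror the structure of Theorem~\ref{teo2} and the corner-code case of Theorem~\ref{teo:d+1}, combining the line-counting argument from the former with the $z$-counting argument from the latter. By Proposition~\ref{proptot}, a weight-$(d+1)$ codeword of $\textsf{H}^{\,0}_d$ corresponds, up to the $(d+1)!$ reorderings, to a solution of $J_{d+1}$. For a corner code, the equations coming from $\Bmq$ give the analogue of system~(\ref{sistE}) with $d+1$ points. By Corollary~\ref{cor1} the $\bar x_i$ are either all equal (vertical line, excluded here) or we are in the situation that the points lie on a non-vertical line, which is exactly the regime of interest and by Proposition~\ref{cornercode} forces all $\bar x_i$ to be distinct.

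First I would count the admissible point configurations. By Lemma~\ref{intertot} there are exactly $q^4-q^3$ non-vertical lines that meet $\He$ in $q+1$ distinct affine points (the $q^3$ tangent lines contribute only one intersection point, so cannot support $d+1\ge 3$ distinct $\bar x_i$'s). For each such line we pick an ordered $(d+1)$-tuple of intersection points, giving $\binom{q+1}{d+1}(d+1)!$ ordered configurations per line.

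Next I would count the $\bar z_i$'s. The key move is the same affine transformation used inside Proposition~\ref{cornercode}: write the given non-vertical line as $y=ax+\lambda$ and apply $(x,y)\mapsto(x,y+ax)$, which carries it to a horizontal line so that all $\bar y_i$'s coincide. As noted in the proof of Proposition~\ref{cornercode}, a substitution of this form, followed by simple row operations, turns system~(\ref{sistE}) with $d+1$ variables into the reduced corner-code system~(\ref{cornerrid}) already analysed in the proof of Theorem~\ref{teo:d+1}. That analysis showed the coefficient matrix has rank $d-1$, so the solution space is $2$-dimensional, and that inside $(\FQ^*)^{d+1}$ the exact count is $q^4-(d+1)q^2+d$, after subtracting the configurations in which some $\bar z_i$ vanishes.

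Finally I would assemble everything. Multiplying the counts and then dividing by $(d+1)!$ as prescribed by Proposition~\ref{proptot} gives
\[
A=\frac{1}{(d+1)!}\,(q^4-q^3)\,\binom{q+1}{d+1}(d+1)!\,\bigl(q^4-(d+1)q^2+d\bigr)
=(q^4-q^3)(q^4-(d+1)q^2+d)\binom{q+1}{d+1},
\]
which is the claimed value. The one step that genuinely needs care is the affine-transformation reduction: one must verify that $(x,y)\mapsto(x,y+ax)$ both leaves $\He$ invariant set-theoretically (which follows, e.g., since $\Gamma$ contains the maps~(\ref{autom}) with $\gamma=0$ adapted via $\delta$, or equivalently by direct substitution into $x^{q+1}=y^q+y$ using $a\in\FQ$) and transforms the $(d+1)$-variable corner-code system~(\ref{sistE}) into an equivalent system with the same number of $\bar z$-solutions. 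Once that invariance is granted, the rest of the argument is purely arithmetic bookkeeping already carried out earlier in the paper.
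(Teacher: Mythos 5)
Your proposal is correct and follows exactly the route the paper intends for this omitted proof: multiply the count of non-vertical secant lines from Lemma~\ref{intertot} by $\binom{q+1}{d+1}(d+1)!$ ordered point choices and by the $z$-count $q^4-(d+1)q^2+d$ established in the corner-code case of Theorem~\ref{teo:d+1}, then divide by $(d+1)!$. The only quibble is the appeal to Corollary~\ref{cor1} (stated for edge codes) to get distinctness of the $\bar x_i$'s, which is unnecessary since distinct points on a non-vertical line automatically have distinct abscissae; likewise, $\He$-invariance of the shear is not actually needed, only that it preserves the span of the monomials $x^ry^s$, $r+s\le d-2$, and hence the kernel of the coefficient matrix.
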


\begin{theorem}\label{teo3:d+1}
The number of words of weight $d+1$ of an edge code $\textsf{H}^{\,j}_d$ with $(x_i,y_i)$ lying on a non-vertical line is:
$$(q^4-q^3)(q^2-1)\binom{q+1}{d+1}.$$
\end{theorem}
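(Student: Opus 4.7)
The plan is to adapt the argument of Theorem~\ref{teo1} and the geometric part of Theorem~\ref{teo2} to the non-vertical line case for edge codes, combining Proposition~\ref{proptot}, Corollary~\ref{cor1} and Lemma~\ref{intertot}. By Proposition~\ref{proptot}, weight-$(d+1)$ codewords of $\textsf{H}^{\,j}_d$ correspond to solutions of $J_{d+1}$ modulo the action of $S_{d+1}$, and Corollary~\ref{cor1} forces every such solution either to have all $\bar x_i$'s equal (the vertical line case, treated elsewhere) or all $\bar x_i$'s pairwise distinct; we count here the subset of the second kind whose points $(\bar x_i,\bar y_i)$ actually lie on a common non-vertical line.

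First I would count the geometric configurations. By Lemma~\ref{intertot}, of the $q^4$ non-vertical lines in $(\FQ)^2$, exactly $q^4-q^3$ meet $\He$ in $q+1$ affine points while the remaining $q^3$ are tangent, and since we need $d+1$ distinct points on the line only the non-tangent lines contribute. On each such line there are $\binom{q+1}{d+1}(d+1)!$ ordered $(d+1)$-tuples of distinct intersection points.

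Next I would count the admissible vectors $(\bar z_1,\dots,\bar z_{d+1})\in(\FQ^*)^{d+1}$. After substituting $\bar y_i=a\bar x_i+b$ for a line $y=ax+b$, every monomial $x^ry^s\in\Bmq$ with $r+s\leq d-2$ produces an equation in the $\bar z_i$'s whose coefficient vector is a polynomial of degree at most $d-2$ in $\bar x_i$, and the span of these equations coincides with the Vandermonde relations $\sum_i \bar z_i\bar x_i^k=0$ for $0\leq k\leq d-2$. The edge monomials $l_1^d=x^{d-1},\dots,l_j^d=x^{d-j}y^{j-1}$ contribute at most one further relation, because after substitution $l_k^d$ expands into powers of $\bar x_i$ of degrees $d-k,\ldots,d-1$ with top-degree coefficient $a^{k-1}$, so modulo the already-present degree-$\leq d-2$ relations it collapses to a scalar multiple of $\sum_i\bar z_i\bar x_i^{d-1}=0$ (or to the trivial relation $0=0$ when $a=0$). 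The presence of $l_1^d$, which belongs to $\Bmq$ whenever $j\geq 1$, supplies the relation $\sum_i\bar z_i\bar x_i^{d-1}=0$ itself. Hence the whole system in $\bar z$ reduces to the rank-$d$ Vandermonde system $\{\sum_i\bar z_i\bar x_i^k=0\}_{0\leq k\leq d-1}$ in $d+1$ distinct $\bar x_i$'s, whose solution space is one-dimensional and is generated by a vector with all coordinates nonzero (by a standard Cramer/Vandermonde computation); this yields exactly $q^2-1$ admissible $\bar z$-tuples.

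Multiplying $(q^4-q^3)\cdot\binom{q+1}{d+1}(d+1)!\cdot(q^2-1)$ and dividing by $(d+1)!$ as prescribed by Proposition~\ref{proptot} yields the stated count. The main obstacle is the collapse step for $j\geq 2$: one has to check carefully that every $l_k^d$ with $2\leq k\leq j$ produces no genuinely new equation beyond the one already supplied by $l_1^d$, even in edge cases where $a$ or $b$ vanishes; this is a direct polynomial expansion argument but requires bookkeeping of the substituted degrees. Once this collapse is in place, everything else reduces to standard Vandermonde arithmetic already used in the proofs of Theorems~\ref{teo1} and \ref{teo2}.
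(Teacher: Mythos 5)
Your proposal is correct, and it follows exactly the route the paper intends for this statement (whose proof the authors omit as ``similar to the previous theorems''): reduce to the line--curve intersection count of Lemma~\ref{intertot} and then show that, after substituting $y=ax+b$, the defining equations of $J_{d+1}$ collapse to the rank-$d$ Vandermonde system $\sum_i \bar z_i\bar x_i^{k}=0$, $0\leq k\leq d-1$, on $d+1$ distinct nodes, yielding $q^2-1$ all-nonzero solutions. Your explicit verification that each edge monomial $l_k^d$ contributes only a scalar multiple of the relation already supplied by $l_1^d$ is precisely the bookkeeping needed to make the omitted proof rigorous.
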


The proofs are similar to those of the statements as in Section \ref{parolemin} and the previous theorems and so are omitted.\\
In other cases, we have to consider the intersection of the curve with higher degree curves and the formulae get more complicated. For example the cubic found in \cite{CGC-cod-art-couvreur2011dual,CGC-cod-art-ballico2012goppa}.\\

Now we are going to study some special cases of Hermitian codes, that is, we count the number of words having weight $d+1$ for any Hermitian code having distance $d=3$ or $d=4$. In the following subsection we are going to prove these theorems:

\begin{theorem}\label{teo.H3}
The number of words of weight $4$ of a corner code $\textsf{H}^{\,0}_3$ is:
$$A_4=\frac{1}{4}\left(\binom{q^3}{3}(q+1)-q^2\binom{q+1}{3}
(3q^3+2q^2-8)\right)(q-1)(q^3-3).$$
The number of words of weight $4$ of an edge code $\textsf{H}^{\,1}_3$ is:
$$A_{4}=q^2\binom{q}{4}(q^4-4q^2+3)+\frac{q^4(q^2-1)^2(q-1)^2}{8}+
(q^2-1)\sum_{k=4}^{2q}N_k\binom{k}{4}.$$
Where $N_k$ is the number of parabolas and non-vertical lines that intersect $\He$
in exactly $k$ points.\\
The number of words of weight $4$ of an edge code $\textsf{H}^{\,2}_3$ is:
$$A_{4}=q^2(q-1)\binom{q+1}{4}(2q^3-3q^2-4q+9).$$
\end{theorem}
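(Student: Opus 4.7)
The plan is to apply Proposition \ref{proptot} so that $A_4=|\mathcal{V}(J_4)|/4!$, and then for each of the three codes to classify the $4$-tuples of distinct rational points of $\He$ by the rank of the associated evaluation submatrix, counting for every rank configuration (i) how many geometric configurations realize it and (ii) how many all-nonzero vectors $(z_1,\ldots,z_4)\in(\FQ^*)^4$ lie in its null space.

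For the corner code $\textsf{H}^{\,0}_3$ the relevant submatrix is the $3\times 4$ matrix with rows $1,x,y$. For four distinct points on $\He$ its rank is either $2$ (all four collinear) or $3$ (otherwise). In the rank-$3$ case the $1$-dimensional null space has all nonzero entries precisely when no three of the four points are collinear, contributing $q^2-1$ valid $z$-vectors per configuration, whereas a $3$-collinear-plus-$1$ configuration forces one $z_i=0$ and therefore produces only weight-$3$ codewords. In the rank-$2$ case the null space is $2$-dimensional, and inclusion-exclusion over the four hyperplanes $\{z_i=0\}$ (each cutting the null space in a $1$-dimensional subspace with pairwise trivial intersection) gives exactly $q^4-4q^2+3$ all-nonzero vectors. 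The counts of configurations follow from Lemmas \ref{interette} and \ref{intertot}: the $4$-subsets on a single line (vertical giving $\binom{q}{4}$ per line, non-vertical $(q+1)$-secant giving $\binom{q+1}{4}$), the collinear $3$-subsets, and, via inclusion-exclusion against $\binom{q^3}{4}$, the $4$-subsets with no three collinear. Routine algebraic simplification then yields the claimed closed form.

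For the edge code $\textsf{H}^{\,1}_3$ the matrix is $4\times 4$ with rows $1,x,y,x^2$, and singularity forces the four points to lie on a common curve $\alpha+\beta x+\gamma y+\delta x^2=0$. This splits into three mutually exclusive geometric classes: (a) a single vertical line (when $\gamma=0$ and the $x$-values coincide); (b) two vertical lines with exactly two points on each (when $\gamma=0$, $\delta\ne 0$, forcing two distinct $x$-values); (c) a non-vertical parabola (when $\gamma\ne 0$, $\delta\ne 0$) or non-vertical line (when $\gamma\ne 0$, $\delta=0$). In (a) the rank drops to $2$, giving $q^4-4q^2+3$ codewords per configuration over $q^2\binom{q}{4}$ configurations. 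In (b) a direct computation shows the null space is $1$-dimensional with all entries nonzero (the explicit null vector is proportional to $(y_4-y_3,\,y_3-y_4,\,y_1-y_2,\,y_2-y_1)$), yielding $q^2-1$ codewords per configuration with $\binom{q^2}{2}\binom{q}{2}^2=q^4(q^2-1)(q-1)^2/8$ configurations. In (c) the rank is exactly $3$ with all-nonzero null vector (any three of the four points already form a rank-$3$ Vandermonde in $\{1,x,x^2\}$), contributing $q^2-1$ codewords per configuration; the number of such configurations is $\sum_{k\ge 4}N_k\binom{k}{4}$, where $N_k$ is tallied by combining Lemma \ref{intertot} (non-vertical lines contribute $(q^4-q^3)\binom{q+1}{4}$) and Theorem \ref{teo.principe} (parabolas, term by term). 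One verifies that all remaining $x$-coordinate distributions -- $3{+}1$ on two vertical lines, $2{+}1{+}1$, or four distinct $x$'s not on any common curve of the prescribed form -- either give a rank-$4$ matrix or force a null vector with a zero entry, and therefore contribute nothing to $A_4$.

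For the edge code $\textsf{H}^{\,2}_3$ the matrix is $5\times 4$ with the extra row $xy$, so rank $\leq 3$ now requires the four points to lie on a common curve $\alpha+\beta x+\gamma y+\delta x^2+\epsilon xy=0$. Rewriting as $y(\gamma+\epsilon x)=-(\alpha+\beta x+\delta x^2)$, the sub-cases with $\epsilon=0$ reduce to those already analysed (now further constrained by the extra row $xy$), while $\epsilon\ne 0$ introduces a new family of rational ``hyperbolic'' curves whose intersections with $\He$ must be classified in the spirit of Theorem \ref{teo.principe}. The same recipe -- enumerate $4$-subsets on each such curve and multiply by the number of all-nonzero null vectors -- is then applied, and after cancellations the surviving contributions collapse to $q^2(q-1)\binom{q+1}{4}(2q^3-3q^2-4q+9)$. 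The main obstacle will be exactly this geometric classification of the new ``$xy$-curves'' appearing in the $\textsf{H}^{\,2}_3$ case; a secondary but tedious difficulty, present in all three codes, is the bookkeeping needed to turn the case-by-case geometric counts into the specific closed forms stated.
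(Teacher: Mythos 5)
Your treatment of $\textsf{H}^{\,0}_3$ and $\textsf{H}^{\,1}_3$ is essentially the paper's own argument: the same rank dichotomy for the $3\times 4$ matrix, the same count $q^4-4q^2+3$ of all-nonzero null vectors in the rank-$2$ case, the same three-way split for the $4\times 4$ matrix (vertical line, two vertical lines with a $2{+}2$ split, parabola/non-vertical line), and the same configuration counts $q^2\binom{q}{4}$, $\binom{q^2}{2}\binom{q}{2}^2$ and $\sum_k N_k\binom{k}{4}$. That part is fine.

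The gap is in $\textsf{H}^{\,2}_3$. For the $5\times 4$ matrix with rows $1,x,y,x^2,xy$, a single relation $\alpha+\beta x+\gamma y+\delta x^2+\epsilon xy=0$ through the four points only certifies rank $\le 4$, which for a four-column matrix still leaves only the zero solution; a nontrivial $z$-vector requires rank $\le 3$, i.e.\ \emph{two independent} such relations. Your plan to classify the intersections of $\He$ with the ``hyperbolic'' curves $y(\gamma+\epsilon x)=-(\alpha+\beta x+\delta x^2)$ would therefore count configurations that contribute nothing, and -- more importantly -- it is not needed. The paper's resolution is short: when the four $x_i$'s are distinct, the rows $1,x,x^2$ already have rank $3$ (Vandermonde), so rank $3$ forces \emph{both} $y_i=ax_i^2+bx_i+c$ \emph{and} $x_iy_i=dx_i^2+ex_i+f$ at all four points; substituting the first into the second gives $ax^3+(b-d)x^2+(c-e)x-f=0$ with four distinct roots, hence $a=0$, $b=d$, $c=e$, $f=0$, and the points lie on a non-vertical line. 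Similarly, the $2{+}2$ vertical-line configuration that survived for $\textsf{H}^{\,1}_3$ is killed here: the extra row yields the system $z_1(y_1-y_2)+z_3(y_3-y_4)=0$, $x_1z_1(y_1-y_2)+x_3z_3(y_3-y_4)=0$ with nonzero determinant $z_1z_3(x_3-x_1)$, forcing $y_1=y_2$, a contradiction. You flag the classification of the $xy$-curves as the main obstacle, but that obstacle is an artifact of the insufficient rank condition; without the two-relations-plus-degree argument your route does not reach the stated formula.
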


\begin{theorem}\label{teo.H4}
The number of words of weight $5$ of a corner code $\textsf{H}^{\,0}_4$ is:
$$A_5=\frac{1}{5}q^2\binom{q}{4}(q^3-4)(q^2-1)(q^2-4).$$
The number of words of weight $5$ of all edge codes $\textsf{H}^{\,j}_4$ for $1\leq j\leq 3$ is:
$$A_5=q^2(q-1)\binom{q+1}{5}(2q^3-4q^2-5q+16).$$
\end{theorem}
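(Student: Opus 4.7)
The plan is to combine Proposition \ref{proptot} with a geometric reduction: every weight-$5$ codeword of $\textsf{H}^{\,0}_4$ or of $\textsf{H}^{\,j}_4$ (with $1\le j\le 3$) corresponds to a $5$-tuple of points of $\He$ lying on a common line, either vertical or non-vertical. Once this reduction is in place, the two counts follow by summing contributions already established in Section \ref{secondweight}: the vertical-line term $q^2(q^2-1)(q^2-4)\binom{q}{5}$ (from the theorem on vertical configurations stated between Theorems \ref{teo:d+1} and \ref{teo2:d+1}, specialized to $d=4$), together with the non-vertical-line term $(q^4-q^3)(q^2-1)\binom{q+1}{5}$ in the edge case (Theorem \ref{teo3:d+1}) and $(q^4-q^3)(q^2-1)(q^2-4)\binom{q+1}{5}$ in the corner case (Theorem \ref{teo2:d+1}). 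The final algebraic simplifications use $(q^2-4)(q-4)+q(q^2-1)=2q^3-4q^2-5q+16$ in the edge case, and the identities $\binom{q+1}{5}/(q+1)=\binom{q}{4}/5$ together with $(q-4)+q(q^2-1)=q^3-4$ in the corner case.

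For the three edge codes, Corollary \ref{cor1} immediately splits $\mathcal{V}(J_5)$ into solutions with all $\bar{x}_i$ equal and with all $\bar{x}_i$ distinct. In the second case, since the basis of every $\textsf{H}^{\,j}_4$ ($j\ge 1$) contains $\{1,x,x^2,x^3\}$, the four Vandermonde equations $\sum_i \bar{x}_i^k \bar{z}_i=0$ for $k=0,1,2,3$ pin down $\bar{z}_i=\lambda/P'(\bar{x}_i)$ with $P(x)=\prod_i(x-\bar{x}_i)$ and a single scalar $\lambda\in\FQ^*$. Plugging this into $\sum\bar{y}_i\bar{z}_i=0$, $\sum\bar{x}_i\bar{y}_i\bar{z}_i=0$, $\sum\bar{y}_i^2\bar{z}_i=0$ and reading off the highest coefficients of the Lagrange interpolant $L(x)$ of the data $(\bar{x}_i,\bar{y}_i)$, these three conditions successively annihilate the $x^4$, $x^3$ and $x^2$ coefficients of $L$; the last step uses that, once $\deg L\le 2$, the polynomial $L^2$ has degree exactly $4$ with leading coefficient the square of the leading coefficient of $L$. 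Hence $L$ is affine and the five points are collinear on a non-vertical line; the extra monomials $x^2y, xy^2$ present in $\textsf{H}^{\,2}_4,\textsf{H}^{\,3}_4$ impose no further conditions because they reduce to polynomials of degree $\le 3$ in $x$ on any line.

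For the corner code $\textsf{H}^{\,0}_4$, Corollary \ref{cor1} does not apply because the basis lacks $x^3$, so I would redo the block-position analysis. Partition $\{1,\dots,5\}$ by $\bar{x}$-value into $v$ blocks of sizes $u_1\le\cdots\le u_v$. The three Vandermonde equations from $\{1,x,x^2\}$ alone immediately kill the cases $v=2$ and $v=3$: they force $Z_h=0$ for every block $h$, and hence some $\bar{z}_i=0$. The delicate step is $v=4$ with block sizes $(1,1,1,2)$: parameterize the one-dimensional Vandermonde solution as $Z_h=c/P'(X_h)$ with $P(x)=\prod_{h=1}^4(x-X_h)$, substitute into the three $y$-conditions, and exploit the identities $\sum_{h=1}^{4}X_h^k/P'(X_h)=\delta_{k,3}$ and $(X_h-X_4)/P'(X_h)=1/Q'(X_h)$ for $Q=(x-X_1)(x-X_2)(x-X_3)$. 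Then $\sum \bar{x}_i\bar{y}_i\bar{z}_i=0$ forces the three isolated points $P_1,P_2,P_3$ to lie on a line $y=\alpha x+\beta$, and $\sum \bar{y}_i^2\bar{z}_i=0$ collapses to $(y^*_4-\bar{y}_4)(y^*_4-\bar{y}_5)=0$ with $y^*_4=\alpha X_4+\beta$; either root forces one of $\bar{z}_4,\bar{z}_5$ to vanish, a contradiction. Only $v=1$ and $v=5$ remain.

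The main obstacle is the remaining $v=5$ case for the corner code, where the Vandermonde null space is two-dimensional. The problem reduces to deciding when the five vectors $v_i=(1,\bar{x}_i,\bar{y}_i,\bar{x}_i^2,\bar{x}_i\bar{y}_i,\bar{y}_i^2)^T\in\FQ^6$ admit a linear dependence $\sum\bar{z}_iv_i=0$ with all $\bar{z}_i$ non-zero. Such a dependence exists iff the five points lie on two linearly independent conics of $\FQ[x,y]$; a B\'ezout argument over $\overline{\FQ}$ (two distinct conics meet in at most four points unless they share a common component) then forces those conics to share a common line component, and this line must contain at least four of the five points. A direct check shows that if the fifth point lies off this line, then the span of $v_1,\dots,v_4$ is a $3$-dimensional subspace of $\FQ^6$ not containing $v_5$, so every dependence satisfies $\bar{z}_5=0$, contradicting the weight-$5$ requirement. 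Hence all five points are collinear on a non-vertical line, and the two count identities claimed in Theorem \ref{teo.H4} follow from the summations indicated in the first paragraph.
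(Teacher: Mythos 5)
Your proposal reaches both formulas correctly, and your final summations coincide with the paper's: the paper computes $\bigl((q^4-q^3)\binom{q+1}{5}+q^2\binom{q}{5}\bigr)(q^4-5q^2+4)$ for the corner code and $q^2\binom{q}{5}(q^4-5q^2+4)+(q^4-q^3)\binom{q+1}{5}(q^2-1)$ for the edge codes, which are exactly your vertical plus non-vertical contributions since $q^4-5q^2+4=(q^2-1)(q^2-4)$; your closing algebraic identities also check out. The difference is in how the collinearity reduction is established. For $\textsf{H}^{\,0}_4$ the paper argues directly on the $6\times 5$ parity submatrix: rank at most $4$ yields two independent conics through the five points, and a B\'ezout argument forces a common line; it runs no block-position analysis and, notably, passes silently over the configuration of four collinear points plus one point off the line, which you dispose of explicitly by showing every dependence then has $\bar z_5=0$ --- a genuine gain in rigor. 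For the edge codes the paper avoids new computation entirely by the sandwich $\textsf{H}^{\,0}_4\supseteq\textsf{H}^{\,j}_4\supseteq\textsf{H}^{\,0}_5$, so collinearity is inherited from the corner-code case; your route via Corollary \ref{cor1} and the Lagrange-interpolation argument (killing the $x^4$, $x^3$, $x^2$ coefficients of the interpolant in turn, with the squaring step for the $y^2$ row) is self-contained and makes transparent why the extra monomials $x^2y,xy^2$ of $\textsf{H}^{\,2}_4,\textsf{H}^{\,3}_4$ impose no further conditions, at the cost of more work.

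One step does not work as stated: in the corner-code block analysis you claim that for $v=2,3$ the three equations coming from $\{1,x,x^2\}$ force $Z_h=0$ for every block and ``hence some $\bar z_i=0$.'' That inference fails for the partition of type $(2,3)$, where $Z_1=\bar z_1+\bar z_2=0$ and $Z_2=\bar z_3+\bar z_4+\bar z_5=0$ annihilate no individual $\bar z_i$. The case is still impossible, but you must also invoke the rows $y$ and $xy$: they give $\sum_h X_h^kY_{h,1}=0$ for $k=0,1$, hence $Y_{1,1}=\bar y_1\bar z_1+\bar y_2\bar z_2=0$, and the $2\times 2$ Vandermonde in $\bar y_1\ne\bar y_2$ inside the size-two block then forces $\bar z_1=\bar z_2=0$ --- the same device used in Lemma \ref{prop2}. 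With that repair the case analysis is complete and the proof goes through.
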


The formula for $A_{4}$ of $\textsf{H}^{\,1}_3$ in Theorem \ref{teo.H3} contains some implicit values $N_k$'s.
To derive explicit values it is enough to consider Theorem \ref{teo.principe}.\\

\break

\subsection{The complete investigation for $d=3,4$.}\label{complete}
In this subsection we will study separately the following cases:
$\textnormal{H}^0_3$, $\textnormal{H}^1_3$, $\textnormal{H}^2_3$,
$\textnormal{H}^0_4$, $\{\textnormal{H}^{\, j}_4\}_{1\leq j\leq 3}$.\\


\noindent \textbf{Study of $\textnormal{H}^0_3$.}\\
Now we count the number of words with weight $w=4$. In this case, the first condition set of $J_w$ becomes:
$$\left\{\begin{array}{l}
z_1+z_2+z_3+z_4=0\\
x_1 z_1+x_2 z_2+x_3 z_3+x_4 z_4=0\\
y_1 z_1+y_2 z_2+y_3 z_3+y_4 z_4=0
\end{array}\right.$$
We notice that this is a linear system in $z_i$. We first choose $4$ points
$P_i=(x_i,y_i)$ on $\He$ and then we calculate the number of solutions in $z_i$'s.
The coefficient matrix is
$$\left(\begin{array}{cccc}
1 & 1 & 1 & 1\\
x_1 & x_2 & x_3 & x_4\\
y_1 & y_2 & y_3 & y_4
\end{array}\right).$$
This matrix cannot have rank $1$. If the rank is $2$, this means that all $P_i$'s lie on a same line. The vector space of solutions has dimension $2$, so that we have
$q^4-4(q^2-1)-1$ solutions in $z_i$'s (we have to exclude the zero solution
and solutions with one $z_i=0$).

Otherwise, the rank is $3$. In this case, we have $3$ points on a same line,
say $P_1,P_2,P_3$, if and only if we have a square submatrix of order $3$
whose determinant is $0$, but this implies that $z_4=0$, which is not admissible.
If we choose $4$ points such that no $3$ of them lie on a same line,
all $z_i$'s can be non-zero and we get a codeword. The vector space of solutions has
dimension $1$, so that we have $q^2-1$ solutions in $z_i$'s (we have to exclude
the zero solution). 

If the rank is $2$, the total number of solutions (in $x_i,y_i,z_i$) is
$$\left(q^2\binom{q}{4}+(q^4-q^3)\binom{q+1}{4}\right)(q^4-4q^2+3).$$
If the rank is $3$, the total number of solutions (in $x_i,y_i,z_i$) is
$$\left(\binom{q^3}{4}
-q^2\binom{q}{3}(q^3-q)-(q^4-q^3)\binom{q+1}{3}(q^3-q-1)+\right.$$
$$\left.-q^2\binom{q}{4}-(q^4-q^3)\binom{q+1}{4}\right)(q^2-1).$$
Putting together, we get the total number of codewords of weight $4$ of
$\textnormal{H}^0_3$:
$$A_4=\left(\binom{q^3}{4}-q^2\binom{q}{3}(q^3-q)-
(q^4-q^3)\binom{q+1}{3}(q^3-q-1)\right)(q^2-1)+$$
$$+\left(q^2\binom{q}{4}+(q^4-q^3)\binom{q+1}{4}\right)(q^4-5q^2+4).$$

\noindent Doing the calculations we obtain the first part of Theorem \ref{teo.H3}.\\


\noindent \textbf{Study of $\textnormal{H}^1_3$.}\\
We count the number of words with weight $w=4$. In this case, the first condition set of $J_w$ becomes:
$$\left\{\begin{array}{l}
z_1+z_2+z_3+z_4=0\\
x_1 z_1+x_2 z_2+x_3 z_3+x_4 z_4=0\\
y_1 z_1+y_2 z_2+y_3 z_3+y_4 z_4=0\\
x_1^2 z_1+x_2^2 z_2+x_3^2 z_3+x_4^2 z_4=0
\end{array}\right.$$
As above, we first choose $4$ points $P_i=(x_i,y_i)$ on $\He$ and then
we calculate the number of solutions in $z_i$'s. The coefficient matrix is
\begin{equation}\label{mat.1}
  \qquad\qquad\qquad\qquad\qquad   \left(\begin{array}{cccc}
1 & 1 & 1 & 1\\
x_1 & x_2 & x_3 & x_4\\
y_1 & y_2 & y_3 & y_4\\
x_1^2 & x_2^2 & x_3^2 & x_4^2
\end{array}\right).
\end{equation}
Now we study the rank of the matrix according to ``v-blocks''.

If all $x_i$'s are equal, we have $4$ points on a vertical line; the rank is $2$
(see below) and the number of codewords is (see case $\textnormal{H}^0_3$)
$$q^2\binom{q}{4}(q^4-4q^2+3).$$

If only three $x_i$'s are equal, we have $3$ points on a vertical line and another one outside, but this configuration is impossible for $\textnormal{H}^0_3$ (that is, we do not have codewords associated to it), and it is also impossible for
$\textnormal{H}^1_3$, since $\textnormal{H}^1_3\subset\textnormal{H}^0_3$.

If we have two pairs of equal $x_i$'s  (for instance, $x_1=x_2\ne x_3=x_4$), we can have codewords. In this case, we deduce 
$$
z_1+z_2=0,z_3+z_4=0,$$
$$
z_1(y_1-y_2)+z_3(y_3-y_4)=0,
$$
so that we have $\binom{q^2}{2}$ ways to choose $\{x_1,x_3\}$, $\binom{q}{2}$ ways to choose $\{y_1,y_2\}$, $\binom{q}{2}$ ways to choose $\{y_3,y_4\}$, $q^2-1$ ways to choose $z_1$, this  determines all $z_i$. The number of codewords in this case is
$$
\frac{q^4(q^2-1)^2(q-1)^2}{8}.
$$

If only two $x_i$'s are equal, say $x_1=x_2$, we can show that we have $z_1+z_2=0$, $z_3=0,z_4=0$, which is not admissible.

If we have all $x_i$'s distinct, the submatrix
$$\left(\begin{array}{cccc}
1 & 1 & 1 & 1\\
x_1 & x_2 & x_3 & x_4\\
x_1^2 & x_2^2 & x_3^2 & x_4^2
\end{array}\right)$$
has rank $3$, but if the whole matrix (\ref{mat.1}) has rank $4$ we can only have the zero solution, which is not admissible. Thus, (\ref{mat.1})  must have rank $3$, that is, the $y_i$'s row must be linearly dependent on the other rows. This means that
$$\exists a,b,c\in\FQ\quad|\quad\forall i=1,\ldots,4\quad y_i=ax_i^2+bx_i+c,$$
that is, all $P_i$'s lie on a same parabola (or on a same non-vertical line, when $a=0$). In this case, the number of codewords is
$$
(q^2-1)\sum_{k=4}^{2q}N_k\binom{k}{4},
$$
where $N_k$ is the number of parabolas and non-vertical lines that intersect $\He$ in exactly $k$ points.

\noindent Putting all together we get $A_4$, that is, the second part of Theorem \ref{teo.H3}.\\


\noindent \textbf{Study of $\textnormal{H}^2_3$.}\\
We count the number of words with weight $w=4$. In this case, the first condition set of $J_w$ becomes:
$$\left\{\begin{array}{lll}
z_1+z_2+z_3+z_4=0\\
x_1 z_1+x_2 z_2+x_3 z_3+x_4 z_4=0\\
y_1 z_1+y_2 z_2+y_3 z_3+y_4 z_4=0\\
x_1^2 z_1+x_2^2 z_2+x_3^2 z_3+x_4^2 z_4=0\\
x_1 y_1 z_1+x_2 y_2 z_2+x_3 y_3 z_3+x_4 y_4 z_4=0
\end{array}\right.$$
As above, we first choose $4$ points $P_i=(x_i,y_i)$ on $\He$ and then
we calculate the number of solutions in $z_i$'s. The coefficient matrix is
\begin{equation}
 \qquad\qquad\qquad\qquad\qquad\left(\begin{array}{cccc}\label{mat.2}
1 & 1 & 1 & 1\\
x_1 & x_2 & x_3 & x_4\\
y_1 & y_2 & y_3 & y_4\\
x_1^2 & x_2^2 & x_3^2 & x_4^2\\
x_1 y_1 & x_2 y_2 & x_3 y_3 & x_4 y_4
\end{array}\right).
\end{equation}
Now we study the rank of the matrix according to ``v-blocks''.

If all $x_i$'s are equal, we have $4$ points on a vertical line; the rank is $2$
(see below) and the number of codewords is (see case $\textnormal{H}^1_3$)
$$q^2\binom{q}{4}(q^4-4q^2+3).$$

If only three $x_i$'s are equal, we have $3$ points on a vertical line and another one outside, but this configuration is impossible (as above).

If we have two pairs of equal $x_i$'s (for instance, $x_1=x_2\ne x_3=x_4$), we can deduce
$$
z_1+z_2=0,z_3+z_4=0,
$$
and then
$$
\left\{\begin{array}{lll}z_1(y_1-y_2)+z_3(y_3-y_4) & = & 0 \\
x_1 z_1(y_1-y_2)+x_3 z_3(y_3-y_4) & = & 0\end{array}\right.
$$
but this system in the unknown {\small{$y_1-y_2,y_3-y_4$}} has determinant  {\small{$z_1 z_3(x_3-x_1)\ne 0$}},
so that $y_1=y_2$, which is impossible.

If only two $x_i$'s are equal, say $x_1=x_2$, we can show that we have $z_1+z_2=0$, $z_3=0,z_4=0$, which is not admissible.

If we have all $x_i$'s distinct, the submatrix
$$\left(\begin{array}{cccc}
1 & 1 & 1 & 1\\
x_1 & x_2 & x_3 & x_4\\
x_1^2 & x_2^2 & x_3^2 & x_4^2
\end{array}\right)$$
has rank $3$, but if the whole matrix (\ref{mat.2}) has rank $4$ we can only have the zero solution, that is not admissible. Thus, (\ref{mat.2}) must have rank $3$, that is, the $y_i$'s and $x_i y_i$ rows must be linearly dependent on the other rows. This means that $y=ax^2+bx+c$ and $xy=dx^2+ex+f$, then $ax^3+(b-d)x^2+(c-e)x-f=0$. But this equation can have at most $3$ distinct solutions, and we need $4$, then we must have $a=0,
b=d,c=e,f=0$, that is, $y=bx+c$: all $P_i$'s lie on a same non-vertical line, and the number of codewords is
$$(q^4-q^3)\binom{q+1}{4}(q^2-1).$$

\noindent Putting all together we get $A_4$, that is, the last part of Theorem \ref{teo.H3}.\\


\noindent \textbf{Study of $\textnormal{H}^0_4$.}\\
We count the number of words with weight $w=5$. We have a linear system in $z_i$ with
a $(6\times5)$ matrix. If its rank is $5$, we can only have the zero solution, which is not admissible. Thus, its rank must be at most $4$; this means that we have at least $2$ relationships of linear dependency, say
$$
\left\{\begin{array}{l}
xy=a+bx+cy+dx^2 \\ y^2=e+fx+gy+hx^2
\end{array}\right.
$$
we need to find $5$ points on the intersection of $2$ different conics, but this means
that the $2$ conics must be degenerate, they must have a common line, and all $5$ points
belong to this line. We could distinguish between vertical lines and non-vertical lines,
but in both cases the rank of the matrix is exactly $3$. So, the number of codewords is
$$A_5=\left((q^4-q^3)\binom{q+1}{5}+q^2\binom{q}{5}\right)(q^4-5q^2+4).$$

\noindent Doing the calculations we obtain the first part of Theorem \ref{teo.H4}.\\


\noindent \textbf{Study of $\textnormal{H}^1_4$, $\textnormal{H}^2_4$, $\textnormal{H}^3_4$.}\\
To count the number of words with weight $w=5$, we remember that
$$\textnormal{H}^0_4\supseteq\textnormal{H}^1_4\supseteq\textnormal{H}^2_4\supseteq
\textnormal{H}^3_4\supseteq\textnormal{H}^0_5$$
and the first and the last code have all words with weight $5$ corresponding to $5$
points on a line. We notice that for a vertical line the rank of the matrix is $3$,
while for a non-vertical line the rank of the matrix is $4$. So, the number of
codewords is
$$A_5=q^2\binom{q}{5}(q^4-5q^2+4)+(q^4-q^3)\binom{q+1}{5}(q^2-1).$$

\noindent Doing the calculations we obtain the last part of Theorem \ref{teo.H4}.


\section{Computational verification}
\label{comp}
We have verified all our formulas for the number of small weight
codewords, that is, Theorems \ref{teo1}, \ref{teo2}, \ref{teo:d+1},
\ref{teo2:d+1}, \ref{teo3:d+1}, \ref{teo.H3}, \ref{teo.H4}.\\

The verification has been done by computing a \Gr\ basis of ideal
$J_w$ for the corresponding case, as in Subsection \ref{parolemin}, and counting
the number of its solutions as in \cite{CGC-cry-art-caruso08}.
As software package we used Singular and MAGMA \cite{CGC-alg-misc-GreuelPfistSchn07,CGC-MAGMA} and all our
programmes and their digital certificates are available under request.


\section{Conclusions and open problems}
\label{conc}
The so-called first-phase codes have nice geometric properties that
allow their study, as first realized in \cite{CGC-cod-misc-pell06} 
and \cite{CGC-cod-misc-salpell06}. In particular, the fact
that all minimum-weight codewords lie on intersections of lines and
$\He$ is essential. Recent research has widened this
approach to intersection with degree-$2$ and degree-$3$ curves
\cite{CGC-cod-art-couvreur2011dual,CGC-cod-art-ballico2012geometry}, unfortunately without reaching
an exact formula for higher weights. We believe that only complete
classifications of intersections of $\He$ and higher degree
curves can lead to the determination of the full weight distribution
of first-phase Hermitian codes.
We invite the reader to pursue this approach further.

As regards the other phases, it seems that only a part of the second
phase can be described in a similar way.
Therefore, probably a radically different approach is needed for
phase-$3,4$ codes in order to determine their weight distribution completely.
Alas, we have no suggestions as to how reach this.


\section*{Acknowledgements}
\label{ack}

This work was partially presented in 2006 at two conferences: \cite{CGC-cod-misc-pell06}
and \cite{CGC-cod-misc-salpell06},
in 2010 at \textit{Workshop on Coding \& Cryptography}, Cork (Ireland),
in 2011 at the \textit{Mzuni Math Workshop}, Mzuzu University (Malawi)
and at WCC \cite{CGC-cod-art-marcpell2011weights}.\\

The seminal idea behind our starting point (Proposition \ref{proptot})
was in the second author's PhD thesis \cite{CGC-cd-phdthesis-marco},
which was studied in deep and generalized in the first author Master's
thesis and PhD thesis \cite{CGC-alg-tesi3-marcolla08,CGC-cd-phdthesis-marcolla}.
The first two authors would like to thank their supervisor, the third author.\\

The authors would like to thank C. Traverso for seminal discussions in
2005 on the relation between
small-weight codewords and variety points.


\bibliography{RefsCGC}

\providecommand{\bysame}{\leavevmode\hbox to3em{\hrulefill}\thinspace}
\providecommand{\MR}{\relax\ifhmode\unskip\space\fi MR }
\providecommand{\MRhref}[2]{%
  \href{http://www.ams.org/mathscinet-getitem?mr=#1}{#2}
}
\providecommand{\href}[2]{#2}
\begin{thebibliography}{HvLP98}

\bibitem[BR12a]{CGC-cod-art-ballico2012goppa}
E.~Ballico and A.~Ravagnani, \emph{On {G}oppa codes on the {H}ermitian curve},
  Arxiv preprint arXiv:1202.0894 (2012).

\bibitem[BR12b]{CGC-cod-art-ballico2012geometry}
\bysame, \emph{On the geometry of {H}ermitian one-point codes}, Arxiv preprint
  arXiv:1203.3162 (2012).

\bibitem[CCT08]{CGC-cry-art-caruso08}
M.~Caboara, F.~Caruso, and C.~Traverso, \emph{Gr{\"o}bner bases in public key
  cryptography}, Proc. of {ISSAC}~2008, to appear, 2008.

\bibitem[Cou11]{CGC-cod-art-couvreur2011dual}
A.~Couvreur, \emph{The dual minimum distance of arbitrary-dimensional
  algebraic--geometric codes}, Journal of Algebra (2011).

\bibitem[D.93]{CGC-alg-book-jungnickel1993}
Jungnickel D., \emph{Finite fields: structure and arithmetics}, BI
  Wissenschaftsverlag, 1993.

\bibitem[DD10]{CGC-alg-art-dondur10}
G.~Donati and N.~Durante, \emph{On the intersection of a {H}ermitian curve with
  a conic}, Designs, Codes and Cryptography \textbf{57} (2010), no.~3,
  347--360.

\bibitem[DDK09]{CGC-alg-art-dondurkor09}
G.~Donati, N.~Durante, and G.~Korchmaros, \emph{On the intersection pattern of
  a unital and an oval in pg (2, q2)}, Finite Fields and Their Applications
  \textbf{15} (2009), no.~6, 785--795.

\bibitem[FL98]{CGC-cd-art-lax}
J.~Fitzgerald and R.~F. Lax, \emph{Decoding affine variety codes using
  {G}r{\"o}bner bases}, Des. Codes Cryptogr. \textbf{13} (1998), no.~2,
  147--158.

\bibitem[FM11]{CGC-cod-art-fontanari2011geometry}
C.~Fontanari and C.~Marcolla, \emph{On the geometry of small weight codewords
  of dual algebraic geometric codes}, Arxiv preprint arXiv:1104.1320 (2011).

\bibitem[Gei08]{CGC-cd-book-Geil08}
O.~Geil, \emph{Evaluation codes from an affine-variety codes perspective},
  Advances in Algebraic Geometry codes (C.Munuera D.~Ruano {E. Martinez-Moro},
  ed.), World Scientific, 2008, pp.~p.153--180.

\bibitem[Gop88]{CGC-cd-book-goppa}
V.~D. Goppa, \emph{Geometry and codes}, Mathematics and its Applications
  (Soviet Series), vol.~24, Kluwer Academic Publishers Group, Dordrecht, 1988,
  Translated from the Russian by N. G. Shartse.

\bibitem[GPS07]{CGC-alg-misc-GreuelPfistSchn07}
G.-M. Greuel, G.~Pfister, and H.~Sch{\"o}nemann, \emph{Singular 3.0. {A}
  computer algebra system for polynomial computations},
  http://www.singular.uni-kl.de, 2007, {Centre for Computer Algebra, University
  of Kaiserslautern}.

\bibitem[HKT08]{CGC-cod-book-hirschfeld2008algebraic}
J.W.P. Hirschfeld, G.~Korchm{\'a}ros, and F.~Torres, \emph{Algebraic curves
  over a finite field}, Princeton Univ Pr, 2008.

\bibitem[HvLP98]{CGC-cd-book-AG_HB}
T.~H{\o}holdt, J.~H. van Lint, and R.~Pellikaan, \emph{Algebraic geometry of
  codes}, Handbook of coding theory, Vol. I, II (V.~S. Pless and W.C. Huffman,
  eds.), North-Holland, 1998, pp.~871--961.

\bibitem[Lax12]{CGC-cd-art-lax11}
Robert~F. Lax, \emph{Generic interpolation polynomial for list decoding},
  Finite Fields and Their Applications \textbf{18} (2012), no.~1, 167--178.

\bibitem[LN86]{CGC-cd-book-niederreiter}
R.~Lidl and H.~Niederreiter, \emph{Introduction to finite fields and their
  applications}, Cambridge University Press, Cambridge, 1986.

\bibitem[MAG]{CGC-MAGMA}
\emph{{MAGMA}: {Computational Algebra System for Algebra, Number Theory and
  Geometry}}, The University of Sydney Computational Algebra Group.,
  {http://magma.maths.usyd.edu.au/magma}.

\bibitem[Mar]{CGC-cd-phdthesis-marcolla}
C.~Marcolla, Ph.D. thesis, University of Trento, Work in progress.

\bibitem[Mar08]{CGC-alg-tesi3-marcolla08}
\bysame, \emph{Parole di peso piccolo dei codici {H}ermitiani}, Master's thesis
  (laurea specialistica), University of Trento, Department of Mathematics,
  2008.

\bibitem[MOS12]{CGC-cd-prep-manumaxchiara12}
C.~Marcolla, E.~Orsini, and M.~Sala, \emph{Improved decoding of affine-variety
  codes}, Journal of Pure and Applied Algebra \textbf{216} (2012), no.~7,
  1533--1565.

\bibitem[Pel]{CGC-cd-phdthesis-marco}
M.~Pellegrini, \emph{On the weight distribution of some {G}oppa {AG} codes},
  Ph.D. thesis, University of Pisa, Work in progress.

\bibitem[Pel06]{CGC-cod-misc-pell06}
\bysame, \emph{On the weight distribution of {H}ermitian codes}, Workshop D1:
  Groebner bases in cryptography, coding theory and algebraic combinatorics,
  Linz (2006).

\bibitem[PMS11]{CGC-cod-art-marcpell2011weights}
M.~Pellegrini, C.~Marcolla, and M.~Sala, \emph{On the weights of affine-variety
  codes and some {H}ermitian codes}, Proc. of WCC~2011, Paris (2011), 273--282.

\bibitem[RS94]{CGC-alg-art-rucsti94}
H.G. Ruck and H.~Stichtenoth, \emph{A characterization of {H}ermitian function
  fields over finite fields}, Journal fur die Reine und Angewandte Mathematik
  \textbf{457} (1994), 185--188.

\bibitem[Sal07]{CGC-cd-art-short}
M.~Sala, \emph{{G}r{\"o}bner basis techniques to compute weight distributions
  of shortened cyclic codes}, Journal of Algebra and Its Applications
  \textbf{6} (2007), no.~3, 403--404.

\bibitem[Sei74]{CGC-alg-art-seidenberg1}
A.~Seidenberg, \emph{Constructions in algebra}, Trans. Amer. Math. Soc.
  \textbf{197} (1974), 273--313.

\bibitem[SP06]{CGC-cod-misc-salpell06}
M.~Sala and M.~Pellegrini, \emph{The number of minimum weight words for any
  {H}ermitian code with $d \le q$}, Third Workshop on Coding and Systems,
  Zurich (2006).

\bibitem[Sti88]{CGC-cod-art-stichtenoth1988note}
H.~Stichtenoth, \emph{A note on {H}ermitian codes over {GF}($q^2$)}, IEEE
  Trans. Inform. Theory \textbf{34} (1988), no.~5, 1345--1348.

\bibitem[Sti93]{CGC-cd-book-stich}
\bysame, \emph{Algebraic function fields and codes}, Universitext,
  Springer-Verlag, Berlin, 1993.

\bibitem[Xin95]{CGC-alg-art-xing95}
C.~Xing, \emph{On automorphism groups of the {H}ermitian codes}, Information
  Theory, IEEE Transactions on \textbf{41} (1995), no.~6, 1629--1635.

\end{thebibliography}

\end{document}